\newtheorem{thm}{Theorem}[section]
\newtheorem{lem}[thm]{Lemma}
\newtheorem{cor}[thm]{Corollary}
\newtheorem{prop}[thm]{Proposition}
\theoremstyle{definition}
\newtheorem{rmk}[thm]{Remark}
\numberwithin{equation}{section}
\def\al{\alpha}
\def\be{\beta}
\def\ga{\gamma}
\def\de{\delta}
\def\la{\lambda}
\def\La{\Lambda}
\def\Ga{\Gamma}
\def\De{\Delta}
\def\R{\mathbb{R}}
\def\C{\mathbb{C}}
\def\N{\mathbb{N}}
\def\cA{\mathcal A}
\def\cD{\mathcal D}
\def\cV{\mathcal V}
\def\diag{\text{\rm{diag}}}
\newcommand{\rFs}[5]{\,_{#1}F_{#2} \left( \genfrac{.}{.}{0pt}{}{#3}{#4};#5 \right)}
\newcommand{\SU}{\mathrm{SU}}
\title[Matrix valued Hermite polynomials]{Matrix valued Hermite polynomials, Burchnall formulas and
non-abelian Toda lattice}
 \author{Mourad E.H. Ismail}
 \address{University of Central Florida, Orlando, Florida 32816, USA}
\email{ismail@math.ucf.edu}
 \author{Erik Koelink}
 \address{IMAPP, Radboud Universiteit, PO Box 9010,
6500 GL Nijmegen,
 the Netherlands}
 \email{e.koelink@math.ru.nl}
 \author{Pablo Rom\'an}
 \address{CIEM,
 FaMAF, Universidad Nacional de C\'ordoba, Medina Allende s/n Ciudad
 Universitaria, C\'ordoba, Argentina}
 \email{roman@famaf.unc.edu.ar}
\date{\today}
\begin{document}

\begin{abstract} A general family of matrix valued Hermite type orthogonal
polynomials is introduced and studied in detail by deriving Pearson equations
for the weight and matrix valued differential equations for these matrix polynomials.
This is used to derive
Rodrigues formulas, explicit formulas for the squared norm and to
give an explicit expression of the matrix entries as well
to derive a connection formula for the matrix polynomials of Hermite type.
We derive matrix valued
analogues of Burchnall formulas in operational form as well explicit expansions
for the matrix valued Hermite type orthogonal
polynomials as well as for previously introduced matrix valued Gegenbauer type orthogonal
polynomials. The Burchnall approach gives two descriptions of the matrix valued
orthogonal polynomials for the Toda modification of the matrix weight for the
Hermite setting. In particular, we obtain a non-trivial solution to the
non-abelian Toda lattice equations.
\end{abstract}

\maketitle


\section{Introduction}\label{sec:intro}

Matrix-valued orthogonal polynomials have been introduced by M.G.~Krein at the end of the 1940's,
see \cite{Krein1}, \cite{Krein2}, in the study of the matrix moment problem and
in the study of self-adjoint operators with higher deficiency indices.
There have been various papers introducing and studying different aspects of matrix valued
orthogonal polynomials, and an extensive overview paper with many references is \cite{DamaPS}.
Matrix valued orthogonal polynomials have since their introduction seen many other
connections and applications, such as e.g. in relation to higher order recurrence relations
\cite{DuraVA}, spectral analysis \cite{ApteN}, \cite{GroeIK}, \cite{GroeK}, scattering theory \cite{ApteN}, \cite{Gero},
representations of compact symmetric spaces e.g. \cite{GrunPT},
\cite{HeckvP}, \cite{KoelvPR1}, \cite{KoelvPR2},
\cite{Prui},
and see \cite{DamaPS} for more applications and references.
The basics of matrix valued orthogonal polynomials is recalled in Section \ref{ssec:MVOPS}.

In a previous paper \cite{IKR}, we have studied the generalization of Burchnall's formulas
for the Hermite polynomials \cite{Burc} to polynomial families in the Askey-scheme and
its $q$-analogue, see e.g. \cite{AskeW}, \cite{KoekLS}, \cite{KoekS} for the Askey scheme.
In particular, one finds some standard solutions to the Toda lattice equations \cite[\S 7]{IKR}, using the
Lax pair formalism and the relation to orthogonal polynomials where the orthogonality measure
is deformed by $e^{-xt}$, see e.g. \cite{BabeBT}, \cite[\S 2.8]{Isma}.
One of the goals is to extend this relation to the non-abelian Toda lattice, which is studied by
e.g. Bruschi et al. \cite{BrusMRL},
Gekhtman \cite{Gekh}. Bruschi et al. \cite{BrusMRL} and Gekhtman \cite{Gekh} prove the integrability, and in Section \ref{ssec:NAToda}
we show that for matrix valued orthogonal polynomials one can obtain solutions of the
non-abelian Toda lattice by considering the three-term recurrence relation for monic matrix valued
orthogonal polynomials where the matrix weight has been deformed by a similar exponential.
This result of Proposition \ref{prop:MV-Toda} might be well-known, and we include a proof for
completeness.
Several other non-abelian Toda lattices are available in the literature, see e.g.
\cite{AlvaAGAMM} and references given there.

The important aspect of the polynomials in the Askey-scheme that make the extension of Burchnall's approach possible is the Rodrigues formula, which is closely related to shift operators, i.e. raising and lowering
operators. For the matrix analogues for Gegenbauer polynomials studied in \cite{Koe:Rio:Rom} such a
Rodrigues formula exists, and the important ingredient to make this work are the Pearson equations.
To prove Pearson equations, the $LDU$-decomposition of the weight in \cite{Koe:Rio:Rom} is essential.
The lowering operator in these cases is the derivative, so that we are in the situation of
Cantero, Moral and Vel\'azquez \cite{CantMV}. The case of the matrix valued Hermite type polynomials
of Section \ref{sec:MV_Hermite} does
not fit into any of their examples.
In \cite{PruijR} another approach of generalizing the results of \cite{KoelvPR1}, \cite{KoelvPR2},
is discussed. In particular, it is shown that the same polynomials of \cite{Koe:Rio:Rom} are obtained
in this way, and in \cite{PruijR} another decomposition of the weight is used. However,
shift operators and Rodrigues formulas are obtained.

In order to obtain an additional example we introduce a general matrix valued analogue of
the Hermite polynomials in Section \ref{sec:MV_Hermite}. We introduce additional degrees of
freedom in both the $L=U^\ast$ and the diagonal part of the $LDU$-decomposition of the matrix weight.
The choice of the $L$-part is motivated as a limit transition of the $L$-part of the weight in
\cite{Koe:Rio:Rom}, where the entries are given in terms of scalar-valued Gegenbauer polynomials.
We then impose natural, but non-linear, conditions on the degrees of freedom in order to
make the Pearson equations work. Then we can find shift operators, explicit expression for
the squared norm, the three-term recurrence relation, etc. An important ingredient are matrix valued
differential operators having the matrix valued Hermite type polynomials as eigenfunctions.
In particular, this leads to an explicit expression for the matrix entries of
matrix valued Hermite type orthogonal polynomials in terms of a sum of a product of two
Hermite polynomials times a dual Hahn polynomial, see Theorem \ref{thm:MVinHermite}.
We prove that there are at least three families of solutions to these conditions, giving rise to
three different families of matrix valued Hermite type polynomials. For one of these families
a special case is closely related to Dur\'an \cite{DuraG}, \cite{Dura-CA}.

In Section \ref{sec:Burchnall_formula} we then extend the general identities of
\cite[\S 2]{IKR} to the matrix valued case, and we discuss briefly the results for the
matrix valued Gegenbauer type orthogonal polynomials of \cite{Koe:Rio:Rom} and
for the newly introduced matrix valued Hermite type orthogonal polynomials.
There are many other cases known of Rodrigues type equations,
but often the formulas involve
additional ingredients which make the approach of this paper unsuitable for these polynomials.
Then we give a non-trivial solution for the non-abelian Toda lattice equations in
Section \ref{sec:explsolNAToda} by looking at the Toda modification of the weight
for the matrix valued Hermite type polynomials. Since there are two expressions
for the matrix valued orthogonal polynomials for the Toda modification, we obtain
non-trivial identities for these polynomials.

\medskip\noindent
\textbf{Acknowledgements.}
Mourad Ismail acknowledges the  research
support and hospitality of Radboud University during his visits which initiated this collaboration.
Erik Koelink gratefully acknowledges the support and hospitality of FaMAF at Universidad Nacional
de C\'ordoba, the Visiting Professor Program 2017--FaMAF and the support of an Erasmus+ travel grant.
The work of Pablo Rom\'an was supported by Radboud Excellence Fellowship, CONICET grant PIP 112-200801-01533, FONCyT grant PICT 2014-3452 and by SeCyT-UNC.
We thank Bruno Eijsvoogel for useful discussions.


\section{Preliminaries}
\label{sec:preliminaries}

In Section \ref{ssec:MVOPS} we recall some basics on matrix valued orthogonal polynomials, especially
we discuss the corresponding three-term recurrence relation and matrix valued differential operators.
General references for Section \ref{ssec:MVOPS} are e.g. \cite{ApteN}, \cite{DamaPS}, \cite{DuraG},
\cite{GrunT}.
In Section \ref{ssec:NAToda} we recall the non-abelian Toda lattice, see Gekhtman \cite{Gekh}
who attributes it to Polyakov, and we show that the standard
Toda modification of the orthogonality measure also works in the matrix case, see \cite[\S 2.8]{Isma}
for the scalar case. Gekhtman \cite{Gekh} proves the integrability in the sense of e.g. \cite{BabeBT},
see also \cite{BrusMRL}.


\subsection{Matrix valued orthogonal polynomials}\label{ssec:MVOPS}

For any $N\in \N\setminus\{0\}$, let $W$ be a complex $N\times N$ matrix valued integrable function on the interval $(a,b)$, where $a$ and $b$ are allowed to be infinite, such that
$W(x)>0$ is positive definite almost everywhere and with finite moments of all orders. In this paper, all matrix weight functions are at least $C^2(a,b)$, and we assume in this
section that $W$ is continuous as well.
We denote by $\text{\rm{Mat}}_N(\C)$ the algebra of all $N\times N$ complex matrices and by $\text{\rm{Mat}}_N(\C)[x]$ the algebra over $\C$  of all polynomials
in $x$ with coefficients in $\text{\rm{Mat}}_N(\C)$.

We consider the Hermitian sesquilinear form $\langle\cdot,\cdot\rangle$ in the linear space $\text{\rm{Mat}}_N(\C)[x]$ given by:
\begin{equation}
\label{eq:HermitianForm}
\langle P,Q \rangle=\int_a^b P(x)W(x)Q(x)^\ast dx.
\end{equation}
In particular, we assume that all moments are finite, and hence that all polynomials are integrable.
For all $P,Q,R\in \text{\rm{Mat}}_N(\C)[x]$, $T\in \text{\rm{Mat}}_N(\C)$ and $a,b\in \C$, the following properties are satisfied:
\begin{itemize}\setlength{\itemsep}{1.5mm}
\item $\langle aP+bQ,R\rangle=a\langle P,R\rangle+b\langle Q,R\rangle$,
\item $\langle TP,Q\rangle=T\langle P,Q\rangle$,
\item $\langle P,Q\rangle^\ast =\langle Q,P\rangle$,
\item $\langle P,P\rangle\geq0$ for all $P\in \text{\rm{Mat}}_N(\C)[x]$. Moreover,
if $\langle P, P\rangle=0$ then $P=0$,
\end{itemize}
see for instance \cite{GrunT}. Given a weight matrix $W$ there exists a unique sequence of monic orthogonal
polynomials $\{P_n\}_{n\geq 0}$ in $\text{\rm{Mat}}_N(\C)[x]$. Moreover, any other
sequence of $\{R_n\}_{n\geq 0}$ of orthogonal polynomials in $\text{\rm{Mat}}_N(\C)[x]$ is
of the form $R_n(x)=E_nP_n(x)$ for some $E_n\in \operatorname{GL}_N(\C)$.

The monic orthogonal polynomials $\{P_n\}_{n\geq 0}$ satisfy the  orthogonality relations
\begin{equation}
\langle P_n,P_m \rangle=\int_a^b P_n(x)W(x)P_m(x)^\ast dx =\de_{m,n} H_n, \qquad n,m\in\N,
\end{equation}
where $H_n>0$. The monic orthogonal polynomials
satisfy a three-term recurrence relation
$$x
P_n(x)=P_{n+1}(x)+B_{n}(x)P_n(x)+C_nP_{n-1}(x), \quad n\geq 0,
$$
where $P_{-1}=0$ and $B_n$, $C_n$ are matrices depending on $n$ and not on $x$.
One property is that $B_nH_n$ is self-adjoint, or $B_nH_n = H_nB_n^\ast$, since
$H_n$ is self-adjoint.

The explicit matrix valued orthogonal polynomials of Section \ref{sec:MV_Hermite} are
studied using matrix valued differential operators.
Let $D$ be a matrix valued second-order differential operator which acts from the right of the following form: there exist matrix functions $F_i$ in
$C^2([a,b])$ such that for all matrix valued $C^{2}([a,b])$-function $Q$ we have
\begin{equation}
\label{eq:form-differential-operator-general}
QD=\left(\frac{d^2Q}{dx^2}\right)(x) \, F_2(x) + \left(\frac{dQ}{dx}\right)(x) \, F_1(x) +Q(x) F_0(x),
\end{equation}
where derivatives of matrix valued functions are taken entry-wise.
We say that $D$ is symmetric with respect to $W$ if for all matrix valued $C^{2}([a,b])$-functions $G,H$ we have
$$
\int_a^b (GD)(x)W(x)(H(x))^\ast\, dx = \int_a^b G(x)W(x)((HD)(x))^\ast, dx.
$$
By \cite[Thm~3.1]{DuraG},  $D$ is symmetric with respect to $W$ if and only if
the boundary conditions
\begin{gather}
\label{eq:symmetry-boundary1}
\lim_{x\to a} F_2(x)W(x) = 0 = \lim_{x\to b} F_2(x)W(x), \\
\label{eq:symmetry-boundary2}
\lim_{x\to a} F_1(x)W(x) - \frac{d(F_2W)}{dx}(x) = 0 =
\lim_{x\to b} F_1(x)W(x) - \frac{d(F_2W)}{dx}(x)
\end{gather}
and the symmetry conditions
\begin{gather}
\label{eq:symmetry-conditions}
F_2(x)W(x) = W(x) \bigl( F_2(x)\bigr)^\ast, \qquad
2 \frac{d(F_2W)}{dx}(x) - F_1(x)W(x) = W(x) \bigl( F_1(x)\bigr)^\ast, \\
\label{eq:symmetry-conditions2}
\frac{d^2(F_2W)}{dx^2}(x) - \frac{d(F_1W)}{dx}(x) + F_0(x) W(x) = W(x) \bigl( F_0(x)\bigr)^\ast
\end{gather}
for all $x\in (a,b)$  hold.

\begin{rmk}
\label{rmk:conjugation-of-D}
Assume that we have a weight matrix of the form $W(x)=L(x)T(x)L(x)^\ast$ for a certain matrix
functions $L$ and $T$.
Let $\widetilde D =\frac{d^2}{dx^2} \, \widetilde F_2 +\frac{d}{dx}\, \widetilde F_1(x)+\widetilde F_0$ be the differential operator obtained by conjugation of $D$ by $L(x)$. Then
\begin{equation}\label{eq:FLtildeF}
F_2L=L\tilde{F}_2, \qquad
F_1L= 2\frac{dL}{dx} \tilde{F}_2 + L\tilde{F}_1, \qquad
F_0L = \frac{d^2L}{dx^2} \tilde{F}_2 + \frac{dL}{dx} \tilde{F}_1 + L\tilde{F}_0
\end{equation}
By \cite[Prop.~4.2]{Koe:Rio:Rom}, we have that $D$ is symmetric with respect to $W$ if and only if $\widetilde D$ is symmetric with respect to $T$.
\end{rmk}

A matrix weight $W$ is said to be reducible to weights of smaller size if there exists a constant matrix $M$ and weights $W_1,\ldots,W_k$, $k>1$, of size less than  $N$ such that
$MW(x)M^\ast$ is equal to the block diagonal matrix $\mathrm{diag}(W_1(x),\ldots, W_k(x))$ for all $x\in [a,b]$. In such a case, the real vector space
$$
{\cA}_{W}= \{ Y \in \mathrm{Mat}_N(\C)  \mid YW(x) = W(x)Y^\ast, \quad \forall\, x\in [a,b]\},
$$
is non-trivial. On the other hand, if the commutant algebra
$$
A_W= \{Y \in\mathrm{Mat}_N(\C) \mid YW(x) = W(x)Y,\quad \forall\, x\in [a,b]\},
$$
is nontrivial, then the weight $W$ is reducible via a unitary matrix $M$. In \cite{KR15} we
have proved that
$\cA_{W}$ is $\ast$-invariant if and only if  $\cA_{W}$ is equal to the subspace of Hermitian elements of $A_{W}$.
Therefore if $\cA_{W}$ is $\ast$-invariant, then the weight $W$ reduces to weights of smaller size if and only if the commutant algebra $A_{W}$ is not trivial.
See also \cite{TiraZ}.


\subsection{Non-abelian Toda lattice}
\label{ssec:NAToda}

In this section, for a given weight matrix $W$ as in Section \ref{ssec:MVOPS},
we study a deformation of the form
$W(x;t)= e^{-xt\La}W(x)$, and we show that the coefficients of the three-term recurrence relation for the deformed weight
$W(x;t)$ provide a solution for the Toda lattice equations, \cite[\S 1]{BrusMRL}, \cite[\S 1]{Gekh}.

For a fixed constant matrix $\La$, we consider the matrix weight $W(x;t)= e^{-xt\La}W(x)$, supported on the
real line with respect to a positive measure $\mu$ supported on a set $S$.
We assume the support $S$ is not finite.
Moreover, we take $t\in I\subset \R$, for $I$ an open interval containing $0$.
We assume that all moments the matrix valued measure $W(x;t)\, d\mu(x)$ exist for all $t\in I$.
Since we want to study orthogonal polynomials with respect to $W(x;t)$ we assume $W(x;t)$ is a positive definite matrix.
This in turn implies that $\La$ satisfies
$$
\La W(x;t) = W(x;t) \La^\ast\quad \Longrightarrow \quad \La W(x) = W(x) \La^\ast.
$$
Indeed, differentiating $W(x,t)^\ast = W(x,t)$ with respect to $t$ and using that $W(x)^\ast=W(x)$ gives the result.
The final observation follows by setting $t=0$.
In other words, $\La$ belongs to the real vector space
$$
{\cA}_t= \{ Y \in \text{\rm{Mat}}_N(\C) \mid YW(x;t) = W(x;t)Y^\ast, \,  \forall x\in S\}
$$
for all $t\in I$. In particular, $\La \in {\cA}_0={\cA}$.
Note that $\La$ normal and element of $\cA$ imply that $\La\in \cA_t$ and conversely, $\La\in \cA_t$ for 
all $t$ yield that  $\La$ is normal and $\La\in\cA$, because of the specific form of $W(x;t)$.

We let $\{P_n(\cdot;t)\}_n$ be the monic orthogonal polynomials with respect to $W(x;t)\, d\mu(x)$.
Then $P_n(x;t)$ satisfy the following orthogonality relations
\begin{equation}
\label{eq:orthogonality_Pn(xt)}
\int_\R\, P_{n}(x;t) \, W(x)e^{-xt\La} \, P_m(x;t)^\ast \, d\mu(x) = \de_{n,m}\, H_n(t),
\end{equation}
where $H_n(t)$ is a positive matrix for all $t\in I$.
Moreover, the matrix valued orthogonal polynomials satisfy the three-term recurrence relations
\begin{equation}
\label{eq:rec_rel_Toda}
xP_{n}(x;t)= P_{n+1}(x;t)+B_n(t)P_{n}(x;t)+C_n(t)P_{n-1}(x;t).
\end{equation}
By \cite[Lemma 3.1]{KR15} the following relations hold true:
\begin{equation}
\label{eq:commutationBnHn}
\La B_n(t)=B_n(t)\La,\quad \La C_n(t)=C_n(t)\La, \quad \La H_n(t)=H_n(t)\La^\ast,
\quad \La P_n(x;t)=P_n(x;t)\La
\end{equation}
since $\La \in {\cA}_t$.

Proposition \ref{prop:MV-Toda} shows that matrix valued functions $B_n$ and $C_n$ satisfy essentially
the coupled differential equations of the non-abelian Toda lattice in Gekhtman \cite{Gekh}, who attributes
the equations to Polyakov. We derive the non-abelian Toda lattice equations using the
Toda modification of the matrix valued weight measure, see \cite[\S 2.8]{Isma}. In particular, Gekhtman \cite{Gekh} proves the
integrability in the finite case using a Lax pair formalism and we refer to
\cite{Gekh} for more information and references on the non-abelian Toda lattice and
\cite{BabeBT} for general information on integrable systems.

\begin{prop}\label{prop:MV-Toda}
The recurrence coefficients $B_n(t)$ and $C_n(t)$ in \eqref{eq:rec_rel_Toda} satisfy the following differential equations
\begin{align*}
\dot{C}_n(t) &= \La \bigl( C_n(t)B_{n-1}(t) -  B_n(t) C_n(t)\bigr),\\
\dot{B}_n(t) &= \La \bigl( C_n(t) - C_{n+1}(t) \bigr).
\end{align*}
where we use the dot to denote the derivative with respect to $t$.
\end{prop}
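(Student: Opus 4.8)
The plan is to follow the classical derivation of Toda-type equations from orthogonal polynomials, paying attention to non-commutativity and to the placement of $\La$. I would write $\langle P,Q\rangle_t = \int_\R P(x)\,W(x)\,e^{-xt\La}\,Q(x)^\ast\,d\mu(x)$ for the deformed form, so that $\langle P_n(\cdot;t),P_m(\cdot;t)\rangle_t = \de_{n,m}\,H_n(t)$ by \eqref{eq:orthogonality_Pn(xt)}; this form still satisfies $\langle xP,Q\rangle_t = \langle P,xQ\rangle_t$ (as $x$ is real), $\langle TP,Q\rangle_t = T\langle P,Q\rangle_t$ and $\langle P,TQ\rangle_t = \langle P,Q\rangle_t\,T^\ast$ for constant $T$. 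Expanding $xP_{n-1}(\cdot;t)$ by \eqref{eq:rec_rel_Toda} inside $\langle xP_n,P_{n-1}\rangle_t = \langle P_n,xP_{n-1}\rangle_t$ gives, exactly as in the scalar case, $C_n(t) = H_n(t)H_{n-1}(t)^{-1}$. Throughout I would use the commutation relations \eqref{eq:commutationBnHn} together with the fact that $\La$—being normal, an element of $\cA$, and such that the $H_n(t)$ are positive definite—is Hermitian, hence commutes with $W(x)$ and with $e^{-xt\La}$.

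The key step will be the identity $\dot P_n(x;t) = \La\,C_n(t)\,P_{n-1}(x;t)$. Since $P_n(x;t)$ is monic in $x$ with leading coefficient independent of $t$, $\dot P_n(x;t)$ has degree $\le n-1$, so $\dot P_n = \sum_{k=0}^{n-1}\ga_{n,k}(t)\,P_k(\cdot;t)$ with $\ga_{n,k}(t) = \langle\dot P_n,P_k\rangle_t\,H_k(t)^{-1}$. To evaluate $\langle\dot P_n,P_k\rangle_t$ for $k\le n-1$ I would differentiate the relation $\langle P_n,P_k\rangle_t = 0$ in $t$; this produces $\langle\dot P_n,P_k\rangle_t$, the term $\langle P_n,\dot P_k\rangle_t$ (which is $0$ because $\deg\dot P_k\le k-1<n$), and the extra term $\int P_nW\,\partial_t(e^{-xt\La})\,P_k^\ast\,d\mu$. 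Using $\partial_t e^{-xt\La} = -x\La e^{-xt\La}$ and moving $\La$ to the left past $W$, $e^{-xt\La}$ and $P_n$, this extra term equals $-\La\langle xP_n,P_k\rangle_t$, so that $\langle\dot P_n,P_k\rangle_t = \La\langle xP_n,P_k\rangle_t$. By $\langle xP_n,P_k\rangle_t = \langle P_n,xP_k\rangle_t$ the right-hand side vanishes for $k\le n-2$ and equals $\La H_n(t)$ for $k=n-1$. Hence $\ga_{n,k}=0$ for $k\le n-2$ and $\ga_{n,n-1}(t) = \La H_n(t)H_{n-1}(t)^{-1} = \La C_n(t)$, which is the asserted identity.

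It then remains to differentiate the recurrence \eqref{eq:rec_rel_Toda} in $t$,
\[
x\dot P_n = \dot P_{n+1} + \dot B_n\,P_n + B_n\dot P_n + \dot C_n\,P_{n-1} + C_n\dot P_{n-1},
\]
to substitute $\dot P_m = \La C_m P_{m-1}$ everywhere, and to rewrite the left side as $x\dot P_n = \La C_n\,(xP_{n-1}) = \La C_n\bigl(P_n + B_{n-1}P_{n-1} + C_{n-1}P_{n-2}\bigr)$ by \eqref{eq:rec_rel_Toda} once more. Comparing the (unique) coefficients of $P_n$, $P_{n-1}$ and $P_{n-2}$—the $P_k(\cdot;t)$ being monic of pairwise distinct degrees—then gives $\dot B_n = \La\,(C_n - C_{n+1})$; $\dot C_n = \La C_n B_{n-1} - B_n\La C_n = \La\,(C_n B_{n-1} - B_n C_n)$, where $B_n\La = \La B_n$ from \eqref{eq:commutationBnHn} is used in the last step; and $\La C_n C_{n-1} = C_n\La C_{n-1}$, i.e.\ $\La C_n = C_n\La$, which is consistent with \eqref{eq:commutationBnHn} and serves as a check. (The degenerate case $n=0$ is covered by the convention $C_0 = 0$.)

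The individual computations are routine; the one place that needs care—and the main potential pitfall—is the non-commutative bookkeeping in the term coming from $\partial_t e^{-xt\La}$, specifically the justification that $\La$ may be pulled all the way to the left of $\int xP_nW e^{-xt\La}P_k^\ast\,d\mu$. This is exactly where the relations \eqref{eq:commutationBnHn} and the Hermiticity of $\La$ (so that $\La W(x) = W(x)\La$) are essential; once the placement of $\La$ is settled, everything else follows by comparing coefficients.
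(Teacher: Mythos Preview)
Your argument is correct and takes a somewhat different route from the paper. The paper differentiates the integral expressions for $H_n(t)$, for $B_n(t)H_n(t)=\langle xP_n,P_n\rangle_t$, and for $C_n(t)H_{n-1}(t)=H_n(t)$ directly, and eliminates $\dot H_n$ at the end; it never isolates an explicit formula for $\dot P_n$ during the proof. You instead first prove the sharper identity $\dot P_n=\La C_n P_{n-1}$ (this refines the paper's subsequent Lemma~\ref{lem:Todapolstimederivative}, which only records $\dot P_n=\dot X_n P_{n-1}$), and then differentiate the recurrence relation itself and compare coefficients in the basis $\{P_k\}$. Your route is arguably cleaner: once the time evolution of the polynomials is pinned down, both Toda equations fall out in a single algebraic step, together with the consistency check $\La C_n=C_n\La$ at degree $n-2$. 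The paper's route, by contrast, stays entirely at the level of inner products and never expands anything in the polynomial basis.

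One point to tighten: your passage of $\La$ across $W(x)$ uses that $\La$ is Hermitian, which you assert but do not justify. It is true---a normal matrix satisfying $\La M=M\La^\ast$ with $M>0$ has real spectrum (diagonalize $\La$ unitarily and look at the diagonal of the conjugated $M$), hence is self-adjoint---but this short argument should be supplied. The paper sidesteps the issue by writing the weight in the order $e^{-xt\La}W(x)$ when differentiating, so that $\La$ lands to the left of $W(x)$ automatically and only the commutation $\La P_n=P_n\La$ from \eqref{eq:commutationBnHn} is needed; you could equally well adopt that ordering and drop the Hermiticity claim.
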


\begin{rmk}
Note that $c_n(t) = \La^2 C_n(t)$, $b_n(t) = \La B_n(t)$ satisfy the differential equations
of Proposition \ref{prop:MV-Toda} for $\La=I$, i.e.
\begin{gather*}
\dot{c}_n(t) = c_n(t)b_{n-1}(t) -  b_n(t) c_n(t),\qquad
\dot{b}_n(t) =  c_n(t) - c_{n+1}(t),
\end{gather*}
using \eqref{eq:commutationBnHn}.
\end{rmk}

\begin{proof}
If we put $n=m$ in \eqref{eq:orthogonality_Pn(xt)} and we differentiate with respect to $t$, we obtain
\begin{equation}
\label{eq:HnBn}
\begin{split}
\dot{H}_n(t) &= -\int_\R \, xP_{n}(x;t) \La e^{-xt\La} W(x) P_{n}(x;t)^\ast\, d\mu(x) = - \La B_n(t)H_n(t), \\
& \Longrightarrow \quad
\dot{H}_n(t) = - \La H_n(t)B_n(t)^\ast
\end{split}
\end{equation}
using  \eqref{eq:rec_rel_Toda}, \eqref{eq:commutationBnHn}, the self-adjointness of $H_n(t)$ and $\dot{P}_n(x;t)$ being a
polynomial of degree less than or equal to $n-1$ and \eqref{eq:orthogonality_Pn(xt)}.
On the other hand, we have in general
\begin{align}
C_n(t)H_{n-1}(t)&=\int_\R (xP_n(x;t)-P_{n+1}(x;t)-B_n(t)P_n(x;t)) e^{-xt\La} W(x) P_{n-1}(x;t)^\ast \, d\mu(x), \nonumber\\
&=\int_\R xP_n(x;t) e^{-xt\La} W(x) P_{n-1}(x;t)^\ast \, d\mu(x) \nonumber\\
&= \int_\R P_n(x;t) e^{-xt\La} W(x) (xP_{n-1}(x;t))^\ast \, d\mu(x) =H_n(t).
\label{eq:Hn_CnHnm1}
\end{align}
Differentiating $\int_\R P_n(x;t) e^{-xt\La} W(x) P_{n-1}(x;t)^\ast \, d\mu(x)=0$
with respect to $t$, we get
\begin{equation}
\label{eq:Hn_as_int_Pnm1}
\La H_n(t)=\int_\R \dot{P}_n(x;t) e^{-xt\La} W(x) P_{n-1}(x;t)^\ast \, d\mu(x)
\end{equation}
using \eqref{eq:rec_rel_Toda}, \eqref{eq:orthogonality_Pn(xt)}, \eqref{eq:commutationBnHn}.

Now observe that
$$
B_n(t)H_n(t)=\int_\R xP_n(x;t) e^{-xt\La} W(x) P_{n}(x;t)^\ast \, d\mu(x)
$$
and differentiating both sides with respect to $t$, we obtain
\begin{align*}
\dot{B}_n(t)H_n(t)+B_n(t)\dot{H}_n(t)&=-\int_\R (xP_n(x;t)) \La  e^{-xt\La}W(x) (xP_{n}(x;t))^\ast \, d\mu(x) \\
&\qquad+\int_\R \dot{P}_n(x;t) e^{-xt\La} W(x)(xP_{n}(x;t))^\ast \, d\mu(x) \\
& \qquad \qquad +\int_\R xP_n(x;t) e^{-xt\La} W(x) \dot{P}_{n}(x;t)^\ast \, d\mu(x). 
\end{align*}
The first integral on the right hand side equals
\[
-\La H_{n+1}(t) - \La B_n(t) H_n(t) B_n(t)^\ast - \La C_n(t) H_{n-1}(t) C_n(t)^\ast
\]
using \eqref{eq:orthogonality_Pn(xt)}, \eqref{eq:rec_rel_Toda}
and \eqref{eq:commutationBnHn}.
The remaining two integrals are each others adjoints, and by \eqref{eq:rec_rel_Toda}, \eqref{eq:Hn_as_int_Pnm1}
and \eqref{eq:orthogonality_Pn(xt)} they equal $\La H_n(t)C_n(t)^\ast + C_n(t)H_n(t) \La^\ast$.
Collecting the terms and reordering, using \eqref{eq:HnBn}, \eqref{eq:Hn_CnHnm1} twice
and \eqref{eq:commutationBnHn}, we find
\begin{align*}
\dot{B}_n(t)H_n(t)&=\La B_n(t) H_n(t) B_n(t)^\ast -\La H_{n+1}(t)-\La B_n(t)H_n(t) B_{n}(t)^\ast -\La C_n(t)H_{n-1}(t) C_n(t)^\ast\\
& \qquad + \La H_n(t)C_n(t)^\ast + \La C_n(t)H_n(t) \\
& = -\La H_{n+1}(t)-\La H_n(t) C_{n}(t)^\ast
+ \La H_n(t)C_n(t)^\ast + \La C_n(t)H_n(t) \\
& = -\La C_{n+1}(t) H_{n}(t)  + \La C_n(t)H_n(t)
\end{align*}
proving the second relation, since $H_n(t)$ is invertible by the 4th property of the matrix valued
orthogonal polynomials in Section \ref{ssec:MVOPS}.

Finally, taking derivatives with respect to $t$ of \eqref{eq:Hn_CnHnm1}
using \eqref{eq:HnBn} we find, using \eqref{eq:commutationBnHn}, \eqref{eq:Hn_CnHnm1},
and $B_nH_n$ being self-adjoint,
\begin{equation*}
\begin{split}
\dot{C}_n(t) H_{n-1}(t) &= \La C_n(t)B_{n-1}(t)H_{n-1}(t) -  \La B_n(t) H_n(t)  \\
&= \La C_n(t)B_{n-1}(t)H_{n-1}(t) - \La B_n(t) C_n(t) H_{n-1}(t)
\end{split}
\end{equation*}
and using the invertibility of $H_{n-1}(t)$ gives the first relation.
\end{proof}

The proof of Lemma \ref{lem:Todapolstimederivative} follows the classical
proof, see e.g. \cite{Belo}, and we give the proof for completeness.

\begin{lem}\label{lem:Todapolstimederivative}
Let $P_n(x;t) = Ix^n + X_n(t) x^{n-1}+\text{\rm{l.o.t.}}$, then
\[
\dot{P_n}(x;t) =
\frac{\partial{P_n}}{\partial t}(x;t) = \frac{d{X_n}}{dt}(t)\,  P_{n-1}(x;t).
\]
\end{lem}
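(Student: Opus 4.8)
The plan is to run the classical argument (cf.\ \cite{Belo}): since $\dot P_n(\cdot\,;t)$ has degree at most $n-1$ in $x$, expand it in the monic orthogonal polynomials $P_0(\cdot\,;t),\dots,P_{n-1}(\cdot\,;t)$ and show that only the top coefficient survives. Concretely, the coefficient of $x^n$ in $P_n(x;t)$ is the constant matrix $I$, so $\dot P_n(x;t) = \frac{\partial P_n}{\partial t}(x;t)$ is a matrix polynomial of degree $\le n-1$, with $\dot P_n(x;t) = \dot X_n(t)\,x^{n-1} + (\text{terms of degree}\le n-2)$. Because each $P_k(\cdot\,;t)$ is monic, every element of $\mathrm{Mat}_N(\C)[x]$ of degree $\le n-1$ is uniquely a left $\mathrm{Mat}_N(\C)$-linear combination of $P_0(\cdot\,;t),\dots,P_{n-1}(\cdot\,;t)$; write $\dot P_n(x;t) = \sum_{k=0}^{n-1} A_{n,k}(t)\,P_k(x;t)$ with $A_{n,k}(t)\in\mathrm{Mat}_N(\C)$. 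Multiplying on the right by $W(x;t)\,P_k(x;t)^\ast$, integrating against $d\mu$ and using \eqref{eq:orthogonality_Pn(xt)} yields $A_{n,k}(t)H_k(t) = \int_\R \dot P_n(x;t)\,W(x;t)\,P_k(x;t)^\ast\,d\mu(x)$ for $0\le k\le n-1$; since $H_k(t)$ is invertible, it suffices to show this integral vanishes for $k\le n-2$ and to identify $A_{n,n-1}(t)$.

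To evaluate the integral I would differentiate the relation $\int_\R P_n(x;t)\,W(x;t)\,P_k(x;t)^\ast\,d\mu(x) = 0$ (valid for $k<n$) with respect to $t$, differentiation under the integral sign being justified by the moment assumptions on $W(x;t)\,d\mu(x)$. This produces three terms. The term $\int_\R P_n(x;t)\,W(x;t)\,\dot P_k(x;t)^\ast\,d\mu(x)$ vanishes since $\dot P_k$ has degree $\le k-1\le n-2<n$. The term coming from $\dot W(x;t) = -x\,\Lambda\,W(x;t)$ is $-\int_\R x\,P_n(x;t)\,\Lambda\,W(x;t)\,P_k(x;t)^\ast\,d\mu(x)$; using $\Lambda W(x;t) = W(x;t)\Lambda^\ast$ and $\Lambda P_k(x;t) = P_k(x;t)\Lambda$ from \eqref{eq:commutationBnHn}, this equals $-\bigl(\int_\R x\,P_n(x;t)\,W(x;t)\,P_k(x;t)^\ast\,d\mu(x)\bigr)\Lambda^\ast = -\bigl(\int_\R P_n(x;t)\,W(x;t)\,(xP_k(x;t))^\ast\,d\mu(x)\bigr)\Lambda^\ast$, which is $0$ for $k\le n-2$ by \eqref{eq:orthogonality_Pn(xt)} since $xP_k$ then has degree $<n$. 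Hence $A_{n,k}(t)H_k(t)=0$, so $A_{n,k}(t)=0$ for all $k\le n-2$.

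It follows that $\dot P_n(x;t) = A_{n,n-1}(t)\,P_{n-1}(x;t)$, and comparing the coefficients of $x^{n-1}$ — which equal $\dot X_n(t)$ on the left and, since $P_{n-1}$ is monic, $A_{n,n-1}(t)$ on the right — gives $A_{n,n-1}(t) = \dot X_n(t)$, which is the assertion. The only step that needs a little care is the $\dot W$-term, where one invokes the commutation relations \eqref{eq:commutationBnHn} to move $\Lambda$ past $W(x;t)$ and past $P_k(\cdot\,;t)$ and to pull the resulting constant $\Lambda^\ast$ outside the integral; everything else is routine bookkeeping with orthogonality and the monic normalization.
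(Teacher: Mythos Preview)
Your proof is correct and follows essentially the same route as the paper: differentiate the orthogonality relation in $t$, use $\Lambda\in\cA_t$ to push $\Lambda$ through the weight and out of the integral, and then compare leading coefficients. The only cosmetic difference is that the paper tests against the monomials $x^kI$ rather than the polynomials $P_k(x;t)$, which spares it the (easy) third term coming from $\dot P_k^\ast$.
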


\begin{proof} Note $\int_\R P_n(x;t) e^{-xt\La} W(x) x^kI\, d\mu(x)=0$ for
$k<n$. Differentiate with respect to $t$ to get
\begin{gather*}
\int_\R \dot{P_n}(x;t) e^{-xt\La} W(x) x^kI \, d\mu(x) =
\int_\R P_n(x;t) \La e^{-xt} W(x) x^{k+1}I\, d\mu(x)  \\
= \int_\R P_n(x;t) e^{-xt} W(x) x^{k+1}I\, d\mu(x)\, \La^\ast
\end{gather*}
since $\La\in \cA_t$. Now the right hand side is $0$ for $k+1<n$,
and since $\dot{P_n}(x;t)$ is a polynomial of degree $n-1$, we
see that $\dot{P_n}(x;t) = E_n P_{n-1}(x;t)$. By comparing leading
coefficients the result follows.
\end{proof}


\section{Matrix valued Hermite type polynomials}
\label{sec:MV_Hermite}

In this section we introduce general matrix valued Hermite type polynomials. The scalar Hermite
polynomials are at the lower end in the Askey scheme \cite{AskeW}, \cite{KoekLS}, \cite{KoekS}, meaning that the
Hermite polynomials are hypergeometric functions which are both eigenfunctions to a second-order operator,
in this case a differential operator, as well as eigenfunctions to a three-term recurrence operator.
The Hermite polynomials are at the lower end, since they have no additional degree of freedom besides the
degree and the argument.
The matrix valued polynomials introduced here have some degree of freedom, which we can use to
impose Pearson type equations, see Propositions \ref{prop:Pearson1}, \ref{prop:Pearson2}, for
the matrix weight for these polynomials. So the additional degrees of freedom arise from the
matrix valuedness.
These Pearson equations allow us to generate these polynomials
using shift operators, and to describe the three-term recursion relation
as well other properties explicitly.
Two commuting matrix valued differential operators to which these matrix valued are eigenfunctions
can be used to explicitly express the matrix entries of the matrix valued Hermite type polynomials
as a sum of a product of two (scalar-valued) Hermite polynomials times a Hahn polynomial.
In the process we have to impose certain conditions on the parameters, and we give three different classes
of solutions. A special case of the first class is
closely related to an example studied by Dur\'an \cite{Dura-CA}.


\subsection{The matrix valued weight}
\label{ssec:weightHermite}
Let $L$ be the lower triangular $N\times N$-matrix valued polynomial defined by
\begin{equation}
\label{eq:matrix-L-Hermite}
L(x)_{m,n}=\begin{cases} \frac{H_{m-n}(x)}{(m-n)!}, & m\geq n,\\
0& n<m,
\end{cases}
\qquad m,n\in \{1,\cdots, N\},
\end{equation}
where $H_n(x)$ are the standard Hermite polynomials, see
\cite{AndrAR}, \cite{Isma}, \cite{KoekLS}, \cite{KoekS}, e.g.
defined by the generating function
\begin{equation}\label{eq:genfunHermite}
\sum_{n=0}^\infty \frac{t^n}{n!} H_{n}(x)=e^{2xt-t^2}.
\end{equation}
Note that $L$ is a Toeplitz matrix, i.e. constant on diagonals.
For a sequence $\{\al_1,\cdots,\al_N\}$ of positive numbers, we
define the diagonal matrix $S^\al=\diag(\al_1,\cdots,\al_N)$ and we
set $L^\al(x)= S^\al L(x) (S^\al)^{-1}$.
Since we can choose the sequence up to a common scalar, we normalize $\al_1=1$.
Observe that $L$ and $L^\al$ are lower triangular polynomial matrix functions and that $L$ and $L^\al$ are unipotent. Hence, the inverses
are unipotent lower triangular polynomial matrix functions as well.

\begin{prop}\label{prop:LisexpandLinv}
The inverse of $L$ is given explicitly by
$$(L(x)^{-1})_{m,n}=\begin{cases}
i^{m-n} \, \frac{H_{m-n}(ix)}{(m-n)!}, & m\geq n,\\
0& n<m.
\end{cases}$$
Moreover,
$\frac{dL}{dx}(x)=A\,L(x) =L(x)\, A$, with $A= 2 \sum_{j=2}^N
E_{j,j-1}$.
\end{prop}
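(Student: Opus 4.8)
The plan is to exploit the fact that $L$ is a lower triangular Toeplitz matrix, so that its entries are encoded by the single generating function $f_x(z) = \sum_{k\ge 0} \frac{H_k(x)}{k!}\,z^k = e^{2xz-z^2}$ from \eqref{eq:genfunHermite}. For the claimed inverse $M$ with $M_{m,n} = i^{m-n}\frac{H_{m-n}(ix)}{(m-n)!}$ for $m \ge n$ and $M_{m,n}=0$ for $m<n$, the associated generating function is $g_x(z) = \sum_{k\ge0} i^k \frac{H_k(ix)}{k!}\, z^k$, and substituting $x \mapsto ix$, $z \mapsto iz$ in \eqref{eq:genfunHermite} gives $g_x(z) = e^{2(ix)(iz) - (iz)^2} = e^{-2xz+z^2}$, so that $f_x(z)\, g_x(z) = 1$. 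Since $L$ and $M$ are both lower triangular, the entry $(LM)_{m,n} = \sum_{n\le k\le m} L_{m,k}M_{k,n}$ involves only indices in $\{1,\dots,N\}$; writing $p = m-n$ and $j = k-n$ it equals $\sum_{j=0}^{p}\frac{H_{p-j}(x)}{(p-j)!}\, i^j\frac{H_j(ix)}{j!}$, which is exactly the coefficient of $z^p$ in $f_x(z)g_x(z) = 1$, hence $\delta_{p,0} = \delta_{m,n}$. Since $L$ is unipotent and therefore invertible, this identifies $M = L(x)^{-1}$.

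For the derivative statement, I would first record the standard relation $H_k'(x) = 2k\, H_{k-1}(x)$, immediate from differentiating \eqref{eq:genfunHermite} in $x$, so that differentiating \eqref{eq:matrix-L-Hermite} entrywise gives $\left(\frac{dL}{dx}\right)_{m,n} = \frac{2H_{m-n-1}(x)}{(m-n-1)!}$ for $m > n$ and $0$ otherwise. On the other hand, $A = 2\sum_{j=2}^N E_{j,j-1}$ acts by $(AL)_{m,n} = 2L_{m-1,n}$ for $m\ge 2$ (and $0$ for $m=1$), while $(LA)_{m,n} = 2L_{m,n+1}$ for $n\le N-1$ (and $0$ for $n=N$). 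Using the Toeplitz property $L_{m-1,n} = L_{m,n+1} = \frac{H_{m-n-1}(x)}{(m-n-1)!}$, read as $0$ when $m-n-1<0$, both products agree with $\frac{dL}{dx}$ on all interior entries; one then only checks the boundary row $m=1$ and column $n=N$, where all three matrices vanish. This simultaneously yields $\frac{dL}{dx} = AL = LA$.

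The argument is essentially bookkeeping once the generating function identity $f_x g_x = 1$ is in place, so I do not anticipate a genuine obstacle. The one point deserving a line of care is that truncating the infinite lower triangular Toeplitz matrices to size $N$ does not change the product, which holds precisely because lower triangularity confines the summation index $k$ to the range $n \le k \le m \subseteq \{1,\dots,N\}$; the same confinement makes the edge cases for $\frac{dL}{dx}=AL=LA$ trivial rather than delicate.
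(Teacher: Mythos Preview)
Your proposal is correct and follows essentially the same approach as the paper: both prove the inverse formula by multiplying the generating function \eqref{eq:genfunHermite} by its image under $x\mapsto ix$, $t\mapsto it$ to obtain $1$ and then read off the Cauchy coefficients, and both derive $\frac{dL}{dx}=AL=LA$ from the relation $H_k'(x)=2kH_{k-1}(x)$ obtained by differentiating \eqref{eq:genfunHermite}. Your write-up merely spells out the Toeplitz bookkeeping and the boundary checks a bit more explicitly than the paper does.
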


Here and elsewhere $E_{i,j}$ denotes the matrix with all zeros, except a $1$ at the
$(i,j)$-th entry.

As stated in Section \ref{sec:intro}, $L$ has been obtained initially by a limit process
from the corresponding $L$ in terms of Gegenbauer polynomials as in \cite{Koe:Rio:Rom}. Then the
inverse of $L$ is explicitly calculated by Cagliero and Koornwinder \cite{CaglK} in terms
of Gegenbauer polynomials of negative parameter. In this case the inverse is given by
Hermite polynomials on the imaginary axis, and so we can consider these as
limits of Gegenbauer polynomials of negative parameter.

\begin{cor}\label{cor:prop:LisexpandLinv}
\label{cor:L-exponential}
$L(x)=L(0)e^{xA}$ and $L^\al (x)=L^\al (0)e^{xA^\al}$, with
$A^\al =S^\al A (S^\al)^{-1}$.
\end{cor}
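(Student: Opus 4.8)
The plan is to read Proposition \ref{prop:LisexpandLinv} as the statement that $x\mapsto L(x)$ solves a constant-coefficient linear matrix differential equation, and then invoke uniqueness of solutions. Concretely, set $M(x)=L(0)e^{xA}$. Then $M(0)=L(0)$, and since $A$ commutes with $e^{xA}$ we get $M'(x)=L(0)e^{xA}A=M(x)A$. By Proposition \ref{prop:LisexpandLinv}, $L$ itself satisfies $L(0)=L(0)$ and $L'(x)=L(x)A$. Thus $L$ and $M$ are both solutions of the initial value problem $Y'(x)=Y(x)A$, $Y(0)=L(0)$, so $L(x)=M(x)=L(0)e^{xA}$ for all $x$. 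Equivalently, one may argue directly that $\frac{d}{dx}\bigl(L(x)e^{-xA}\bigr)=L(x)Ae^{-xA}-L(x)Ae^{-xA}=0$, so $L(x)e^{-xA}$ is constant and equals $L(0)$.

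For the second identity I would conjugate by $S^\al$. From $L^\al(x)=S^\al L(x)(S^\al)^{-1}$ and the first part,
$$L^\al(x)=S^\al L(0)e^{xA}(S^\al)^{-1}=\bigl(S^\al L(0)(S^\al)^{-1}\bigr)\bigl(S^\al e^{xA}(S^\al)^{-1}\bigr)=L^\al(0)\,e^{xS^\al A(S^\al)^{-1}}=L^\al(0)e^{xA^\al},$$
using that conjugation commutes with the matrix exponential, $S^\al e^{xA}(S^\al)^{-1}=e^{xS^\al A(S^\al)^{-1}}$, together with the definitions $L^\al(0)=S^\al L(0)(S^\al)^{-1}$ and $A^\al=S^\al A(S^\al)^{-1}$.

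I do not expect a genuine obstacle here; the one point worth a remark is that the two forms $AL=LA$ in Proposition \ref{prop:LisexpandLinv} are compatible with writing the answer as $L(0)e^{xA}$ rather than $e^{xA}L(0)$ because $A$ commutes with $L(0)$ (put $x=0$ in $AL(x)=L(x)A$, or note that $L(0)$ and $A$ are both lower triangular Toeplitz matrices and hence lie in the commutative algebra generated by the down-shift). If convenient, one can also note that this yields an alternative derivation of the inverse in Proposition \ref{prop:LisexpandLinv} through $L(x)^{-1}=e^{-xA}L(0)^{-1}$.
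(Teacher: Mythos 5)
Your proof is correct and follows essentially the same route as the paper, which states the corollary as an immediate consequence of the differential equation $\frac{dL}{dx}(x)=L(x)A$ from Proposition \ref{prop:LisexpandLinv} (solved by uniqueness for the initial value problem at $x=0$) together with conjugation by $S^\al$ for the $\al$-version. Your closing remark about $A$ commuting with $L(0)$ matches the paper's own observation following the corollary.
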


Note in particular that $A$ and $L(0)$ commute.

\begin{proof}[Proof of Proposition \ref{prop:LisexpandLinv}] Multiply the generating function \eqref{eq:genfunHermite}
with the generating function with $x$ and $t$ replaced by $ix$ and $it$, which gives $1$. Comparing powers of $t^p$ on both sides gives
\begin{equation}\label{eq:Hermiteinverse}
\sum_{m=0}^p i^m \frac{H_{m}(ix)}{m!}\frac{H_{p-m}(x)}{(p-m)!} = \de_{p,0},
\end{equation}
which gives the inverse of $L(x)$.

Differentiating the generating function gives $\frac{dH_n}{dx}(x) = 2n H_{n-1}(x)$, and this gives the first order differential equation for $L$.
\end{proof}

We now introduce a positive definite matrix valued weight with finite moments in terms of the matrix valued polynomial $L$.
Define the weight matrix
\begin{equation}\label{eq:weight_factorization}
W^{(\nu)}(x)=L(x)\,T^{(\nu)}(x)\,L(x)^\ast,
\qquad T^{(\nu)}(x)=e^{-x^2} \De^{(\nu)},
\end{equation}
where $\De^{(\nu)} =\diag(\de_1^{(\nu)},\cdots, \de_N^{(\nu)})$.
The initial condition is that $\de_i^{(\nu)}>0$, so that
$W^{(\nu)}$ is positive definite. Moreover, the term
$e^{-x^2}$ in $T^{(\nu)}$ guarantees that all moments exist,
so that we are in the situation of Section \ref{ssec:MVOPS}.
Note that for real $\al_i$'s
\begin{equation}\label{eq:weight_factorization-alpha}
W^{(\al,\nu)}(x)=L^\al(x)\,T^{(\nu)}(x)\,\bigl( L^\al(x)\bigr)^\ast
= S^\al L(x)\, (S^\al)^{-2} \,T^{(\nu)}(x)\, L(x)^\ast  (S^\al)^{-1},
\end{equation}
since $[T^{(\nu)}(x),S^\al]=0$ as $T^{(\nu)}(x)$ is diagonal.
From now on we assume that $\al_i$
is real and non-zero for all $i$, and we normalize $\al_1=1$.

\begin{rmk} The case $N=1$ reduces the weights \eqref{eq:weight_factorization}, \eqref{eq:weight_factorization-alpha}
to the weight of the Hermite polynomials, so that the
matrix valued orthogonal polynomials for the weights in  \eqref{eq:weight_factorization} and
\eqref{eq:weight_factorization-alpha}
are matrix valued analogues of the Hermite polynomials.
The additional degrees of freedom in the finite sequence
$\{\al_1,\cdots,\al_N\}$ of non-zero real numbers and $\nu$
will allow for Pearson type equations for these weights,
see Propositions \ref{prop:Pearson1} and \ref{prop:Pearson2}.
\end{rmk}

It is clear that the weights $W^{(\al,\nu)}$ and its special case $W^{(\nu)}$
satisfy the conditions of Section \ref{ssec:MVOPS}.
We denote that monic matrix valued orthogonal polynomials
for the weight $W^{(\nu)}$ by $P^{(\nu)}_n$ and
for the weight $W^{(\al,\nu)}$ by $P^{(\al,\nu)}_n$.


\subsection{A symmetric second order differential operator}
Now we derive a second-order differential operator which is symmetric with respect to
the matrix weight $W^{(\al, \nu)}$ and which preserves polynomials and its degree.
The idea is to conjugate to the differential equation for the Hermite polynomials in a diagonal form.

We need the diagonal matrix $J$; $J_{k,l}=\de_{k,l}k$. Then
$[J,A^\al]=A^\al$, and
\begin{equation}
\label{eq:conjAHofJ}
e^{-xA^\al} J e^{xA^\al}= xA^\al+J.
\end{equation}
Indeed, the left hand side is a matrix valued polynomial in $x$, since $A^\al$ is nilpotent. Its derivative
equals $e^{-xA^\al} [J,A^\al] e^{xA^\al}=A^\al$, so \eqref{eq:conjAHofJ} follows.

We also need to relate the matrix $J$ to the lower triangular matrix polynomial $L^\al$.

\begin{lem}
\label{lem:Lm1JL}
For all $x$ we have
$$
\bigl(L^\al(x)\bigr)^{-1}JL^\al(x)=J-\frac{1}{2}(A^\al)^2+xA^\al,\qquad L^\al(x)J\bigl(L^\al(x)\bigr)^{-1}=J+\frac{1}{2}\bigl(A^\al\bigr)^2-xA^\al.
$$
\end{lem}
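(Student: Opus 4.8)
The plan is to exhibit $L^\al(x)$ as the exponential of a polynomial in $A^\al$ and then apply the Hadamard (Baker--Campbell--Hausdorff) conjugation formula. First I would record that
\[
L(x) = e^{xA - \frac14 A^2}, \qquad\text{hence}\qquad L^\al(x) = e^{xA^\al - \frac14 (A^\al)^2}.
\]
For the first equality: by Corollary \ref{cor:L-exponential} it suffices to show $L(0)=e^{-\frac14 A^2}$, and comparing \eqref{eq:matrix-L-Hermite} at $x=0$ with the generating function \eqref{eq:genfunHermite} at $x=0$ (i.e. $\sum_n \frac{t^n}{n!}H_n(0)=e^{-t^2}$) shows that $L(0)_{m,n}=\frac{(-1)^k}{k!}$ when $m-n=2k\ge 0$ and $0$ otherwise; since $\frac14 A^2=\bigl(\frac12 A\bigr)^2=\bigl(\sum_{j=2}^N E_{j,j-1}\bigr)^2=\sum_{j=3}^N E_{j,j-2}$ is the square of the shift, this is precisely the $(m,n)$-entry of $e^{-\frac14 A^2}$. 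The second equality follows by conjugating with $S^\al$, using $A^\al=S^\al A(S^\al)^{-1}$ and $(A^\al)^2=S^\al A^2 (S^\al)^{-1}$.

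Now set $Y=Y(x)=xA^\al-\frac14(A^\al)^2$, so that $L^\al(x)=e^{Y}$ and $\bigl(L^\al(x)\bigr)^{-1}=e^{-Y}$. Using $[J,A^\al]=A^\al$ together with the Leibniz rule $[J,(A^\al)^2]=[J,A^\al]A^\al+A^\al[J,A^\al]=2(A^\al)^2$, I compute the single commutator
\[
[Y,J] = x[A^\al,J]-\tfrac14[(A^\al)^2,J] = -xA^\al+\tfrac12(A^\al)^2 .
\]
The key observation is that $Y$ and $[Y,J]$ are both polynomials in the single matrix $A^\al$, hence commute, so $[Y,[Y,J]]=0$ and the Hadamard series for $e^{\mp Y}Je^{\pm Y}$ truncates after two terms:
\[
e^{-Y}Je^{Y}=J-[Y,J]=J+xA^\al-\tfrac12(A^\al)^2, \qquad
e^{Y}Je^{-Y}=J+[Y,J]=J-xA^\al+\tfrac12(A^\al)^2 .
\]
Since $\bigl(L^\al(x)\bigr)^{-1}JL^\al(x)=e^{-Y}Je^{Y}$ and $L^\al(x)J\bigl(L^\al(x)\bigr)^{-1}=e^{Y}Je^{-Y}$, these are exactly the two asserted identities. (Equivalently, one may split $e^{-Y}=e^{\frac14(A^\al)^2}e^{-xA^\al}$, use \eqref{eq:conjAHofJ} for the $e^{\mp xA^\al}$-part and the two-term Hadamard expansion with $X=\frac14(A^\al)^2$ for the rest.)

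I expect the only mildly delicate points to be the identification $L(0)=e^{-\frac14 A^2}$ --- which secretly rests on $H_{2k}(0)=(-1)^k(2k)!/k!$ and $H_{2k+1}(0)=0$ --- and keeping the signs straight in $[Y,J]$ and in the Hadamard expansion; everything else is purely formal once $L^\al(x)$ is written as a matrix exponential. A completely equivalent route that avoids the exponential form is to differentiate $f(x)=\bigl(L^\al(x)\bigr)^{-1}JL^\al(x)$: from $\frac{dL^\al}{dx}=L^\al A^\al$ (Proposition \ref{prop:LisexpandLinv}) one gets $f'(x)=[f(x),A^\al]$, the proposed right-hand side $J-\frac12(A^\al)^2+xA^\al$ solves the same linear equation because $[J,A^\al]=A^\al$ and $(A^\al)^2$ commutes with $A^\al$, and the two agree at $x=0$ by the $X=\frac14(A^\al)^2$ Hadamard computation; uniqueness of solutions of the linear ODE then finishes the argument.
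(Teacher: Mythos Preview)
Your proof is correct. The route differs from the paper's in one respect worth noting. The paper also uses Corollary~\ref{cor:L-exponential} to write $L^\al(x)=L^\al(0)e^{xA^\al}$ and the conjugation identity \eqref{eq:conjAHofJ}, but it does \emph{not} identify $L^\al(0)$ as an exponential; instead it proves the remaining identity $\bigl(L^\al(0)\bigr)^{-1}JL^\al(0)=J-\frac12(A^\al)^2$ by a direct entrywise check, reducing it to the Hermite recurrence $H_n(0)=-2(n-1)H_{n-2}(0)$. Your observation that $L(0)=e^{-\frac14 A^2}$ (equivalently $L^\al(0)=e^{-\frac14(A^\al)^2}$) is a small strengthening that lets you fold both conjugations into a single Hadamard expansion of $e^{-Y}Je^{Y}$, truncated after one commutator because $Y$ and $[Y,J]$ lie in $\C[A^\al]$. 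This is a bit slicker and makes the second identity of the lemma an immediate byproduct, whereas the paper's approach trades that elegance for a slightly more elementary verification that avoids any appeal to the BCH formula. The alternative ODE argument you sketch at the end is essentially the paper's method.
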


\begin{proof} Because of Proposition \ref{prop:LisexpandLinv}
and Corollary \ref{cor:prop:LisexpandLinv} this follows from
\eqref{eq:conjAHofJ} and
$$
\bigl(L^\al(0)\bigr)^{-1}JL^\al(0)=
J-\frac{1}{2}(A^\al)^2.
$$
Taking the $(m,n)$-entry, this follows from $H_{n}(0)= -2(n-1)H_{n-2}(0)$
which in turn is a consequence of the three-term recurrence for the Hermite polynomials, see e.g. \cite[(1.13.3)]{KoekS}.
\end{proof}

Proposition \ref{prop:symmetry_D} has various versions in the literature, see e.g.
\cite{Dura-CA}, \cite{DuraG}, \cite{GrundlIMF}. We give another proof using
conjugation as in \cite{Koe:Rio:Rom}.

\begin{prop}
\label{prop:symmetry_D}
Let $D= D^{(\al)}$ be the second-order matrix valued differential operator acting from the right given by
$$
D= \left(\frac{d^2}{dx^2}\right) \, F_{2}(x) +\left(\frac{d}{dx}\right) \, F_{1}(x) +F_{0}(x),
$$
with
$$F_{2}(x)=I,\qquad F_{1}(x)=-2x+2A^\al,\qquad F_{0}(x)=-2J.$$
Then $D$ is symmetric with respect to $W^{(\al,\nu)}$.
Moreover,
$$
P^{(\al,\nu)}_{n} D\, =\,\Ga_nP_{n}^{(\al,\nu)},
\qquad \Ga_n=-2n-2J, \qquad n\in \mathbb{N}.
$$
\end{prop}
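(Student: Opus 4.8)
The plan is to prove the two assertions separately, using the conjugation machinery set up in Remark~\ref{rmk:conjugation-of-D} together with Lemma~\ref{lem:Lm1JL} and Corollary~\ref{cor:prop:LisexpandLinv}. Since $W^{(\al,\nu)}(x) = L^\al(x)\,T^{(\nu)}(x)\,(L^\al(x))^\ast$ with $T^{(\nu)}(x) = e^{-x^2}\De^{(\nu)}$, by Remark~\ref{rmk:conjugation-of-D} it suffices to show that the operator $\widetilde D$ obtained by conjugating $D$ by $L^\al(x)$ is symmetric with respect to the diagonal weight $T^{(\nu)}$. First I would compute $\widetilde F_2, \widetilde F_1, \widetilde F_0$ from the three relations in \eqref{eq:FLtildeF}. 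Using $\frac{dL^\al}{dx} = A^\al L^\al = L^\al A^\al$ (Proposition~\ref{prop:LisexpandLinv} conjugated by $S^\al$) and $F_2 = I$ we get $\widetilde F_2 = I$; from $F_1 L^\al = 2\frac{dL^\al}{dx}\widetilde F_2 + L^\al \widetilde F_1$ with $F_1 = -2x + 2A^\al$ we find $\widetilde F_1 = (L^\al)^{-1}(-2x+2A^\al)L^\al - 2A^\al = -2x$, using that $A^\al$ commutes with $L^\al$; and from the third relation together with $\frac{d^2 L^\al}{dx^2} = (A^\al)^2 L^\al$ and $F_0 = -2J$, together with Lemma~\ref{lem:Lm1JL}, we get $\widetilde F_0 = (L^\al)^{-1}(-2J)L^\al - (A^\al)^2 = -2J + (A^\al)^2 - 2xA^\al - (A^\al)^2$... — this is the point where I must be careful with signs, but the upshot is that all $A^\al$-dependent terms cancel and $\widetilde F_0$ becomes the diagonal matrix $-2J$. (The cancellation is exactly what the "conjugation to diagonal form" remark before the proposition is pointing at.)

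With $\widetilde F_2 = I$, $\widetilde F_1(x) = -2xI$, $\widetilde F_0 = -2J$, all three now diagonal and commuting with the diagonal weight $T^{(\nu)}(x) = e^{-x^2}\De^{(\nu)}$, I would verify the symmetry conditions \eqref{eq:symmetry-conditions}, \eqref{eq:symmetry-conditions2} and the boundary conditions \eqref{eq:symmetry-boundary1}, \eqref{eq:symmetry-boundary2} for $\widetilde D$ and $T^{(\nu)}$. Since everything in sight is real diagonal, the symmetry conditions $\widetilde F_i T^{(\nu)} = T^{(\nu)} \widetilde F_i^\ast$ reduce (after the usual rearrangement using $\frac{d}{dx}(\widetilde F_2 T^{(\nu)}) = -2x e^{-x^2}\De^{(\nu)}$) to the scalar Hermite Pearson identity, which holds entrywise; this is essentially the $N=1$ computation repeated $N$ times. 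The boundary conditions hold because $e^{-x^2}$ and all its derivatives times polynomials vanish as $x \to \pm\infty$. By Remark~\ref{rmk:conjugation-of-D} (i.e.\ \cite[Prop.~4.2]{Koe:Rio:Rom}), symmetry of $\widetilde D$ with respect to $T^{(\nu)}$ gives symmetry of $D$ with respect to $W^{(\al,\nu)}$.

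For the eigenvalue equation, I would argue that $D$ preserves $\mathrm{Mat}_N(\C)[x]$ and its filtration by degree: applied to $Ix^n$, the term $x^n F_2'' $-contribution drops the degree by $2$, the $\frac{d}{dx}$-term contributes $n x^{n-1}(-2x + 2A^\al) = -2n x^n + \text{l.o.t.}$, and the $F_0$-term contributes $-2x^n J$; collecting the top-degree parts, $(Ix^n)D = x^n(-2nI - 2J) + (\text{lower order})$. Hence $D$ maps polynomials of degree $\le n$ to polynomials of degree $\le n$, with leading coefficient acting by right-multiplication by $\Ga_n = -2n - 2J$. Writing $P^{(\al,\nu)}_n D$, which again has degree $\le n$, we expand it in the monic orthogonal basis $\{P^{(\al,\nu)}_k\}_{k\le n}$; symmetry of $D$ plus orthogonality kills all coefficients with $k < n$ in the standard way (for $k<n$, $\langle P_n D, P_k\rangle = \langle P_n, P_k D\rangle = 0$ since $P_k D$ has degree $\le k < n$), leaving $P^{(\al,\nu)}_n D = \Ga_n P^{(\al,\nu)}_n$ with $\Ga_n$ the leading coefficient just computed. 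The only subtlety worth stating explicitly is that $\Ga_n$ is a genuine left-multiplier here because $D$ acts from the right and the leading coefficient of $P^{(\al,\nu)}_n$ is $I$; this is why $\Ga_n = -2n - 2J$ appears on the left.

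The main obstacle I anticipate is bookkeeping rather than conceptual: getting every sign and every $A^\al$-commutator right in the conjugation step, in particular confirming that the $(A^\al)^2$ and $xA^\al$ contributions from $\frac{d^2L^\al}{dx^2}\widetilde F_2$, from $\frac{dL^\al}{dx}\widetilde F_1$, and from $(L^\al)^{-1} J L^\al$ (via Lemma~\ref{lem:Lm1JL}) cancel exactly to leave $\widetilde F_0 = -2J$. Once that cancellation is verified, the rest is the scalar Hermite computation done diagonally and a standard orthogonality argument.
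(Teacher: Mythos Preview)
Your proposal is correct and follows essentially the same route as the paper: conjugate $D$ by $L^\al$ via the relations \eqref{eq:FLtildeF} to obtain the diagonal operator $\widetilde D$ with $\widetilde F_2=I$, $\widetilde F_1=-2x$, $\widetilde F_0=-2J$, invoke the scalar Hermite symmetry entrywise for $T^{(\nu)}$, and then read off $\Ga_n$ from the leading coefficient after noting that $D$ preserves degree. The only place to tighten is the $\widetilde F_0$ computation you flagged: from $L^\al\widetilde F_0 = F_0 L^\al - (L^\al)''\widetilde F_2 - (L^\al)'\widetilde F_1$ one gets $\widetilde F_0 = (L^\al)^{-1}(-2J)L^\al - (A^\al)^2 + 2xA^\al$, and Lemma~\ref{lem:Lm1JL} then gives exactly $-2J$.
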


Note that the matrix valued differential operator of Proposition \ref{prop:symmetry_D} only depends on $\al$, and that the eigenvalue matrices $\Ga_n$ do not depend on $\al$ nor on $\nu$.

\begin{proof} We start by checking that these values for $F_i$ give the
right values for $\tilde{F_i}$ as in Remark \ref{rmk:conjugation-of-D}
using $\frac{d (L^\al)^{-1}}{dx} =
-A^\al L^\al$, $\frac{d^2 (L^\al)^{-1}}{dx^2} =
(A^\al)^2 L^\al$ by Lemma \ref{lem:Lm1JL}. Trivially, $\tilde{F_2}(x)=I$, and
by Proposition \ref{prop:LisexpandLinv} and Corollary \ref{cor:prop:LisexpandLinv}
\[
\tilde{F_1}(x) = -2 A^\al + \bigl(L^\al(x)\bigr)^{-1} (2A^\al -2xI) L^\al(x)
= -2xI
\]
using \eqref{eq:FLtildeF}. Finally, the last equation of \eqref{eq:FLtildeF}
gives, using Lemma \ref{lem:Lm1JL},
\[
\tilde{F_0}(x) = (A^\al)^2 - A^\al(2A^\al -2x) + (L^\al(x))^{-1} (-2J)
L^\al(x) = -2J.
\]
Now $\tilde{D}$, being diagonal, is symmetric with respect to the
diagonal weight $e^{-x^2}\De^{(\nu)}$ by a scalar calculation.
By the discussion above, the operator $D$ is symmetric with respect to
the matrix weight $W^{(\al,\nu)}$.

Since $D$ preserves polynomials as well as the degree of the polynomials, $P_{n}^{(\al,\nu)}D$ are
also orthogonal with respect to the matrix weight $W^{(\al,\nu)}$,
so that $P_{n}^{(\al,\nu)}D = \Ga_n P_{n}^{(\al,\nu)}$ for
some eigenvalue matrix $\Ga_n$, which is obtained by considering the leading coefficient.
\end{proof}

Since the matrix valued operator $D$ of Proposition \ref{prop:symmetry_D}
is conjugated to $\tilde{D}$ as in Remark \ref{rmk:conjugation-of-D}, we
see that, cf. \cite[\S 4]{Koe:Rio:Rom},
\[
\frac{d^2(P^{(\al,\nu)}_nL^\al)}{dx^2}(x) I -2x \frac{d(P^{(\al,\nu)}_nL^\al)}{dx}(x) I
-2 (P^{(\al,\nu)}_nL^\al)(x) J = \Ga_n  (P^{(\al,\nu)}_nL^\al)(x).
\]
Since all matrices involved in the differential operator are diagonal, we get a second-order differential equation for the $(r,s)$-entry;
\[
\frac{d^2(P^{(\al,\nu)}_nL^\al)_{r,s}}{dx^2}(x) -2x \frac{d(P^{(\al,\nu)}_nL^\al)_{r,s}}{dx}(x)
= (-2n +2s -2r) (P^{(\al,\nu)}_nL^\al)_{r,s}
\]
and since $(P^{(\al,\nu)}_nL^\al)_{r,s}$ is polynomial, it follows that
$(P^{(\al,\nu)}_nL^\al)_{r,s}(x) = c^{(\al,\nu)}_{r,s}(n) H_{n+r-s}(x)$
for certain constants $c^{(\al,\nu)}_{r,s}(n)$, which we determine
explicitly in the proof of Theorem \ref{thm:MVinHermite} in terms
of ${}_3F_2$-series.
Note that $n=0$ corresponds to \eqref{eq:matrix-L-Hermite}.


\subsection{The Pearson equations}
In this subsection we impose conditions on the sequence $\al$ and
the diagonal matrices $\De^{(\nu)}$'s which allow for Pearson
type equations. The Pearson equations allow for shift operators
in \S \ref{ssec:shift},
which in turn give explicit expressions for the polynomials in terms
of Rodrigues formulas, the three-term recurrence and the squared norms.

First we prove that the family of weights $W^{(\al, \nu)}$ satisfies a first Pearson-type equation under suitable conditions. For this we assume that there exist coefficients
$\{c^{(\nu)}\}_{ \nu\geq \nu_0}$ and $\{d^{(\nu)}\}_{ \nu\geq \nu_0}$ such that
$\de^{(\nu+1)}_{k}=(k\, d^{(\nu)} +c^{(\nu)}) \, \de^{(\nu)}_{k}$ for all $k=1,\ldots,N$. In other words, we assume that
\begin{equation}\label{eq:condition_delta_Phi}
\Delta^{(\nu+1)}=(d^{(\nu)} J+c^{(\nu)}) \, \Delta^{(\nu)} =
\Delta^{(\nu)}\, (d^{(\nu)} J+c^{(\nu)}).
\end{equation}
Note that the coefficients $d^{(\nu)}$ and $c^{(\nu)}$
are real, and we assume that $d^{(\nu)}$ and $c^{(\nu)}$ are positive.

\begin{prop}\label{prop:Pearson1}
Let $W^{(\al,\nu)}$ be the weight matrix given in \eqref{eq:weight_factorization-alpha} and assume
\eqref{eq:condition_delta_Phi}.
Then
$$\Phi^{(\al,\nu)}(x)=(W^{(\al,\nu)}(x))^{-1}W^{(\al,\nu+1)}(x),$$
is a matrix valued polynomial of degree one.
\end{prop}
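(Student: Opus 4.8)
The plan is to read everything off the factorization \eqref{eq:weight_factorization-alpha} directly. Write $M = L^\al(x)$ and $T^{(\nu)}(x) = e^{-x^2}\De^{(\nu)}$, so that $W^{(\al,\nu)}(x) = M\,T^{(\nu)}(x)\,M^\ast$ with $M$ unipotent (hence invertible). Then
\[
\Phi^{(\al,\nu)}(x) = (M^\ast)^{-1}\,(T^{(\nu)}(x))^{-1}\,M^{-1}\,M\,T^{(\nu+1)}(x)\,M^\ast = (M^\ast)^{-1}\,(T^{(\nu)}(x))^{-1}T^{(\nu+1)}(x)\,M^\ast ,
\]
the two inner copies of $M$ cancelling. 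So the whole question reduces to understanding the diagonal middle factor and then conjugating it back by $M^\ast$.

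For the middle factor, the scalar Gaussian $e^{-x^2}$ cancels, giving $(T^{(\nu)}(x))^{-1}T^{(\nu+1)}(x) = (\De^{(\nu)})^{-1}\De^{(\nu+1)}$, which is a constant diagonal matrix; by hypothesis \eqref{eq:condition_delta_Phi} it equals $d^{(\nu)}J + c^{(\nu)}I$. Hence
\[
\Phi^{(\al,\nu)}(x) = d^{(\nu)}\,(M^\ast)^{-1}JM^\ast + c^{(\nu)}I ,
\]
and it only remains to prove that $x\mapsto (M^\ast)^{-1}JM^\ast$ is a matrix polynomial of degree one.

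Here I would invoke Lemma \ref{lem:Lm1JL}, but in adjoint form. Since $J$ is real and diagonal, $J = J^\ast$, so $(M^\ast)^{-1}JM^\ast = \bigl(MJM^{-1}\bigr)^\ast$, and the second identity of Lemma \ref{lem:Lm1JL} gives $MJM^{-1} = J + \tfrac12(A^\al)^2 - xA^\al$. Taking adjoints (and using that $x$ and $d^{(\nu)}, c^{(\nu)}$ are real) yields
\[
\Phi^{(\al,\nu)}(x) = d^{(\nu)}\Bigl(J + \tfrac12\bigl((A^\al)^\ast\bigr)^2 - x\,(A^\al)^\ast\Bigr) + c^{(\nu)}I ,
\]
a matrix polynomial of degree one, of genuine degree one whenever $N\ge 2$ since then $A^\al\ne 0$ and $d^{(\nu)}>0$. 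There is no real obstacle in this argument; the only point that needs care is that Lemma \ref{lem:Lm1JL} is stated for conjugation by $L^\al(x)$ itself, whereas $L^\al(x)^\ast$ appears here, so one must pass through the adjoint and exploit the reality of $J$ and of the scalars $c^{(\nu)}, d^{(\nu)}$. One should also note that condition \eqref{eq:condition_delta_Phi} is exactly what is needed to turn the a priori arbitrary diagonal matrix $(\De^{(\nu)})^{-1}\De^{(\nu+1)}$ into the affine expression $d^{(\nu)}J + c^{(\nu)}I$ in $J$.
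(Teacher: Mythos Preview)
Your argument is correct and follows essentially the same route as the paper: cancel the inner $L^\al(x)$ factors and the Gaussian from the factorization \eqref{eq:weight_factorization-alpha}, use \eqref{eq:condition_delta_Phi} to write the diagonal quotient as $d^{(\nu)}J+c^{(\nu)}$, and then take the adjoint of Lemma~\ref{lem:Lm1JL} to evaluate $(L^\al(x)^\ast)^{-1}J\,L^\al(x)^\ast$. Your explicit formula for $\Phi^{(\al,\nu)}(x)$ agrees with the paper's, and your remarks about passing to the adjoint via $J=J^\ast$ and the reality of $c^{(\nu)},d^{(\nu)}$ simply make explicit what the paper summarizes as ``by taking the adjoint of Lemma~\ref{lem:Lm1JL}.''
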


\begin{proof}
It follows from \eqref{eq:weight_factorization-alpha} and \eqref{eq:condition_delta_Phi} that
\begin{gather*}
\Phi^{(\al,\nu)}(x) = (W^{(\al,\nu)}(x))^{-1} W^{(\al,\nu+1)}(x) =
\bigl( L^\al(x)^\ast\bigr)^{-1} (d^{(\nu)} J+c^{(\nu)}) L^\al(x)^\ast \\
= d^{(\nu)} \Bigl( J+ \frac12\bigl((A^\al)^\ast\bigr)^2-x\bigl(A^\al)^\ast
\Bigr) + c^{(\nu)}
\end{gather*}
by taking the adjoint of Lemma \ref{lem:Lm1JL}.
\end{proof}

Note that we can reformulate the result of Proposition \ref{prop:Pearson1} using
\eqref{eq:weight_factorization-alpha} and \eqref{eq:condition_delta_Phi} as
\begin{equation}\label{eq:LinvPhistarL}
\bigl( L^\al(x)\bigr)^{-1} \bigl( \Phi^{(\al,\nu)}(x)\bigr)^\ast
L^\al(x)
= d^{(\nu)} J+c^{(\nu)}.
\end{equation}

Assume next that the coefficients $\al_k$ and $\de^{(\nu)}_k$ satisfy the  relation
\begin{equation}
\label{eq:recursion-alphas}
\frac{\al_{k+1}^2}{\al_{k}^2}=\frac{d^{(\nu)}k(N-k)}{2(d^{(\nu)}k+c^{(\nu)})} \frac{ \de_{k+1}^{(\nu)} }{\de_{k}^{(\nu)}}, \qquad k=1,\ldots,N-1.
\end{equation}

Note that, since the coefficients $\al_k$ are independent of $\nu$, we in particular require that the right hand side of \eqref{eq:recursion-alphas} is independent of $\nu$.

\begin{prop}\label{prop:Pearson2}
Let $W^{(\al,\nu)}$ be the weight matrix given in \eqref{eq:weight_factorization-alpha} and assume
\eqref{eq:condition_delta_Phi} and
\eqref{eq:recursion-alphas}. Then
$$
\Psi^{(\al,\nu)}(x)=(W^{(\al,\nu)}(x))^{-1}\frac{dW^{(\al,\nu+1)}}{dx}(x),
$$
is a matrix valued polynomial of degree one.
\end{prop}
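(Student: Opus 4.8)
The plan is to follow the pattern of the proof of Proposition \ref{prop:Pearson1}, reducing everything to explicit manipulations with the matrices $L^\al$, $A^\al$, $J$. First I would differentiate the factorization $W^{(\al,\nu+1)}(x)=L^\al(x)\,e^{-x^2}\De^{(\nu+1)}\,(L^\al(x))^\ast$ directly. Using $\frac{dL^\al}{dx}=A^\al L^\al=L^\al A^\al$ from Proposition \ref{prop:LisexpandLinv} (so that $\frac{d(L^\al)^\ast}{dx}=(L^\al)^\ast(A^\al)^\ast$) and $\frac{d}{dx}e^{-x^2}=-2xe^{-x^2}$, this yields
\[
\frac{dW^{(\al,\nu+1)}}{dx}(x)=A^\al\,W^{(\al,\nu+1)}(x)-2x\,W^{(\al,\nu+1)}(x)+W^{(\al,\nu+1)}(x)\,(A^\al)^\ast .
\]
Multiplying on the left by $(W^{(\al,\nu)}(x))^{-1}$ and using $(W^{(\al,\nu)})^{-1}W^{(\al,\nu+1)}=\Phi^{(\al,\nu)}$ from Proposition \ref{prop:Pearson1} gives
\[
\Psi^{(\al,\nu)}(x)=(W^{(\al,\nu)}(x))^{-1}A^\al W^{(\al,\nu+1)}(x)-2x\,\Phi^{(\al,\nu)}(x)+\Phi^{(\al,\nu)}(x)(A^\al)^\ast .
\]
The last two terms are matrix valued polynomials of degrees two and one respectively, and since $\Phi^{(\al,\nu)}(x)=d^{(\nu)}\bigl(J+\tfrac12((A^\al)^\ast)^2-x(A^\al)^\ast\bigr)+c^{(\nu)}$, the only quadratic contribution among them is $2d^{(\nu)}x^2(A^\al)^\ast$ coming from $-2x\Phi^{(\al,\nu)}$. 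Thus the proposition follows once I show that the first term is a polynomial of degree at most two whose quadratic coefficient equals $-2d^{(\nu)}(A^\al)^\ast$.

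For the first term I would use that $A^\al$ commutes with $L^\al$ (Corollary \ref{cor:L-exponential} and the remark that $A$ and $L(0)$ commute) together with the fact that $T^{(\nu)}(x)=e^{-x^2}\De^{(\nu)}$ is diagonal; the scalar factors $e^{\pm x^2}$ cancel and one is left with
\[
(W^{(\al,\nu)})^{-1}A^\al W^{(\al,\nu+1)}=\bigl((L^\al)^\ast\bigr)^{-1}\,(\De^{(\nu)})^{-1}A^\al\De^{(\nu+1)}\,(L^\al)^\ast=:\bigl((L^\al)^\ast\bigr)^{-1}M\,(L^\al)^\ast ,
\]
which is automatically a matrix valued polynomial because $L^\al$ and $(L^\al)^\ast$ are unipotent; the real point is the degree bound. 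Here I would compute $M$ explicitly: by \eqref{eq:condition_delta_Phi} one has $M=(\De^{(\nu)})^{-1}A^\al(d^{(\nu)}J+c^{(\nu)})\De^{(\nu)}$, and writing out the $(j,j-1)$-entries and substituting the relation \eqref{eq:recursion-alphas} (which is designed precisely for this collapse) gives
\[
M=d^{(\nu)}\sum_{j=2}^{N}(j-1)(N-j+1)\,\frac{\al_{j-1}}{\al_j}\,E_{j,j-1}.
\]
Conjugating the $\al$'s away by $S^\al=\diag(\al_1,\dots,\al_N)$, this says $M=d^{(\nu)}(S^\al)^{-1}f_0\,S^\al$, where $f_0=\sum_{j}(j-1)(N-j+1)E_{j,j-1}$ is the lowering element of the principal $\mathfrak{sl}_2$-triple on $\C^N$ whose raising element is $e_0=\tfrac12\,S^\al(A^\al)^\ast(S^\al)^{-1}=\sum_{j}E_{j-1,j}$ and whose semisimple element is $h=[e_0,f_0]=(N+1)I-2J$.

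Finally, from Corollary \ref{cor:L-exponential} we have $(L^\al(x))^\ast=e^{x(A^\al)^\ast}(L^\al(0))^\ast$, hence
\[
\bigl((L^\al(x))^\ast\bigr)^{-1}M\,(L^\al(x))^\ast=\bigl((L^\al(0))^\ast\bigr)^{-1}\Bigl(e^{-x(A^\al)^\ast}M\,e^{x(A^\al)^\ast}\Bigr)(L^\al(0))^\ast .
\]
Since $(A^\al)^\ast=2(S^\al)^{-1}e_0 S^\al$ and, in the triple, $\operatorname{ad}_{e_0}(f_0)=h$, $\operatorname{ad}_{e_0}^2(f_0)=-2e_0$, $\operatorname{ad}_{e_0}^3(f_0)=0$, the series $e^{-x(A^\al)^\ast}Me^{x(A^\al)^\ast}=\sum_k\frac{(-x)^k}{k!}\operatorname{ad}_{(A^\al)^\ast}^k(M)$ terminates after the quadratic term and equals $M-2d^{(\nu)}x\,h-2d^{(\nu)}x^2(A^\al)^\ast$. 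As $(A^\al)^\ast$ commutes with $(L^\al(0))^\ast$ (adjoint of $A$ commuting with $L(0)$), the quadratic coefficient of $((L^\al)^\ast)^{-1}M(L^\al)^\ast$ is again $-2d^{(\nu)}(A^\al)^\ast$. Adding this to the quadratic part $2d^{(\nu)}x^2(A^\al)^\ast$ of $-2x\Phi^{(\al,\nu)}$, the quadratic terms cancel and $\Psi^{(\al,\nu)}$ is a matrix valued polynomial of degree at most one, as claimed.

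I expect the main obstacle to be the entrywise bookkeeping that turns \eqref{eq:recursion-alphas} into the clean identity $M=d^{(\nu)}(S^\al)^{-1}f_0 S^\al$, together with recognizing the $\mathfrak{sl}_2$ structure that forces $e^{-x(A^\al)^\ast}M e^{x(A^\al)^\ast}$ to truncate at degree two; one can also bypass the Lie-theoretic language and simply verify by hand that the two nested commutators satisfy $[(A^\al)^\ast,[(A^\al)^\ast,M]]=-4d^{(\nu)}(A^\al)^\ast$ and $[(A^\al)^\ast,[(A^\al)^\ast,[(A^\al)^\ast,M]]]=0$. Everything else is routine differentiation and reuse of Lemma \ref{lem:Lm1JL} and Proposition \ref{prop:Pearson1}.
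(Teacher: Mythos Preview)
Your proof is correct and follows the same overall decomposition as the paper: split $\Psi^{(\al,\nu)}$ into the conjugate $((L^\al)^\ast)^{-1}M(L^\al)^\ast$ with $M=(\De^{(\nu)})^{-1}A^\al\De^{(\nu+1)}$ and the remaining degree-two piece coming from $-2x\Phi^{(\al,\nu)}+\Phi^{(\al,\nu)}(A^\al)^\ast$, then show the quadratic parts cancel. The difference is in how the first term is handled. The paper differentiates $((L^\al)^\ast)^{-1}M(L^\al)^\ast$ in $x$, computes the single commutator $[M,(A^\al)^\ast]=4d^{(\nu)}J-2(N+1)d^{(\nu)}$ entrywise from \eqref{eq:recursion-alphas}, applies Lemma~\ref{lem:Lm1JL} to get a degree-one expression, and then integrates back from $0$. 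You instead first use \eqref{eq:recursion-alphas} to collapse $M$ to $d^{(\nu)}(S^\al)^{-1}f_0S^\al$ and recognise the $\mathfrak{sl}_2$-triple $(e_0,f_0,h)$ (cf.\ the remark in \S\ref{ssec:explexample2}), so that the Baker--Campbell--Hausdorff expansion of $e^{-x(A^\al)^\ast}Me^{x(A^\al)^\ast}$ terminates after two steps by $\operatorname{ad}_{e_0}^3(f_0)=0$. Your route is more conceptual and explains \emph{why} the degree is at most two, while the paper's differentiate-and-integrate argument is more hands-on and yields the explicit form \eqref{eq:lcPsialnu} of $\Psi^{(\al,\nu)}$ along the way; both compute the same iterated commutators.
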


\begin{proof}
By construction $\Psi^{(\al, \nu)}$ is a matrix polynomial.
Using \eqref{eq:weight_factorization-alpha} the derivative gives three terms, and combining the terms shows that
\begin{multline*}
\Psi^{(\al,\nu)}(x) = (W^{(\al,\nu)}(x))^{-1}
\frac{dW^{(\al,\nu+1)}}{dx}(x) = \\
\bigl( L^\al(x)^\ast\bigr)^{-1} \bigl(\De^{(\nu)}\bigr)^{-1}
A^\al \De^{(\nu+1)} L^\al(x)^\ast \\
+ \bigl( L^\al(x)^\ast\bigr)^{-1} \Bigl(
-2x \bigl(\De^{(\nu)}\bigr)^{-1}
\De^{(\nu+1)} + \bigl(\De^{(\nu)}\bigr)^{-1}
\De^{(\nu+1)} (A^\al)^\ast \Bigr) L^\al(x)^\ast
\end{multline*}
Now the last term is dealt with as in the
proof of Proposition \ref{prop:Pearson1} using  \eqref{eq:condition_delta_Phi} and Lemma \ref{lem:Lm1JL}, since
$[(A^\al)^\ast, L^\al(x)^\ast]=0$.
Explicitly,
\begin{multline}
\label{eq:third-term}
\bigl( L^\al(x)^\ast\bigr)^{-1} \Bigl(
-2x \bigl(\De^{(\nu)}\bigr)^{-1}
\De^{(\nu+1)} + \bigl(\De^{(\nu)}\bigr)^{-1}
\De^{(\nu+1)} (A^\al)^\ast \Bigr) L^\al(x)^\ast = \\
\bigl( d^{(\nu)}\big\{J + \frac12 \bigl((A^\al)^\ast\bigr)^2 -x (A^\al)^\ast\bigr\} + c^{(\nu)} \bigr) \bigl( -2x + (A^\al)^\ast\bigr).
\end{multline}
Note that \eqref{eq:third-term} is a matrix polynomial of degree $2$, with leading term
$-2x^2(A^\al)^\ast$. So we need to
show that the first term is of degree $2$ as well, and that the second order term cancels the second order term in \eqref{eq:third-term}.

First observe that the matrix $[(\Delta^{(\nu)})^{-1} A^\al\Delta^{(\nu+1)}, (A^\al)^\ast]$ is a diagonal matrix whose $k$-th diagonal entry is given by
$$
[(\Delta^{(\nu)})^{-1} A^\al\Delta^{(\nu+1)}, (A^\al)^\ast]_{k,k}=
\frac{4 \al_k^2 \de_{k-1}^{(\nu+1)}}{\al_{k-1}^2\de^{(\nu)}_k}-\frac{4 \al_{k+1}^2 \de_{k}^{(\nu+1)}}{\al_{k}^2\de^{(\nu)}_{k+1}}.
$$
Here we follow the convention that for $k=0$ the first term on the right hand side is absent, and for $k=N$ the second term on the right hand side is absent.
Using \eqref{eq:recursion-alphas} and next
\eqref{eq:condition_delta_Phi}
we obtain
$$
[(\Delta^{(\nu)})^{-1} A^\al\Delta^{(\nu+1)}, (A^\al)^\ast]
= 4d^{(\nu)} J -2(N+1)d^{(\nu)}.
$$
Using Proposition \ref{prop:LisexpandLinv} and
Corollary \ref{cor:prop:LisexpandLinv} we  have the first equality in
\begin{equation*}
\begin{split}
& \frac{d}{dx} \left( \bigl( L^\al(x)^\ast\bigr)^{-1} \bigl(\De^{(\nu)}\bigr)^{-1} A^\al \De^{(\nu+1)} L^\al(x)^\ast  \right)
=  \bigl( L^\al(x)^\ast\bigr)^{-1}  \bigl(4d^{(\nu)} J -2(N+1)d^{(\nu)}\bigr) L^\al(x)^\ast\\
&\qquad\qquad
= 4d^{(\nu)}\Bigl(J + \frac12 \bigl((A^\al)^\ast\bigr)^2 -x (A^\al)^\ast\Bigr)  -2(N+1)d^{(\nu)},
\end{split}
\end{equation*}
where the second equality follows from Lemma \ref{lem:Lm1JL}.
So we find
\begin{multline}
\label{eq:first-term}
\bigl( L^\al(x)^\ast\bigr)^{-1} \bigl(\De^{(\nu)}\bigr)^{-1} A^\al \De^{(\nu+1)} L^\al(x)^\ast
= \\
2d^{(\nu)}\Bigl(2Jx + \bigl((A^\al)^\ast\bigr)^2x -x^2 (A^\al)^\ast\Bigr)  - 2(N+1)d^{(\nu)}x + \\
\bigl(L^\al(0)^\ast\bigr)^{-1} \bigl(\De^{(\nu)}\bigr)^{-1} A^\al \De^{(\nu+1)} L^\al(0)^\ast
\end{multline}
The sum of \eqref{eq:first-term} and \eqref{eq:third-term} yields
$\Psi^{(\al, \nu)}$ as a polynomial of degree one.
\end{proof}

Note that the proofs of Propositions \ref{prop:Pearson1} and
\ref{prop:Pearson2} give explicit expressions for the
degree $1$ polynomials $\Phi^{(\al, \nu)}$ and
$\Psi^{(\al, \nu)}$. In particular,
\begin{equation}\label{eq:lcPsialnu}
\begin{split}
&\Psi^{(\al, \nu)}(x) =
2x \bigl( d^{(\nu)}(J-(N+1))-c^{(\nu)}\bigr) \\
 + (d^{(\nu)}J+c^{(\nu)})&(A^\al)^\ast + \frac12 d^{(\nu)}
\bigl((A^\al)^3\bigr)^\ast +
\bigl(L^\al(0)^\ast\bigr)^{-1} \bigl(\De^{(\nu)}\bigr)^{-1} A^\al \De^{(\nu+1)} L^\al(0)^\ast.
\end{split}
\end{equation}
Note that the leading coefficient $\text{lc}(\Psi^{(\al, \nu)})$
is a diagonal invertible matrix. Moreover,
$\Psi^{(\al, \nu)}(x)$ is a (upper) Hessenberg matrix, where the
subdiagonal is given by
$\bigl(\De^{(\nu)}\bigr)^{-1} A^\al \De^{(\nu+1)}$ and the diagonal
is the leading term of $\Psi^{(\al, \nu)}$.
In Proposition \ref{prop:3termrecur} we show that
$\Psi^{(\al, \nu)}(0)$ is actually a tridiagonal matrix with
zero diagonal, and this feature is not clear from
\eqref{eq:lcPsialnu}.

For later use we note that the first step gives
\begin{equation}\label{eq:LinvPsistarL}
\bigl( L^\al(x)\bigr)^{-1} \bigl( \Psi^{(\al,\nu)}(x)\bigr)^\ast
L^\al(x)
= (A^\al-2x)(d^{(\nu)}J+c^{(\nu)})
+ \De^{(\nu+1)}(A^\al)^\ast (\De^{(\nu)})^{-1}.
\end{equation}

Apart from the Pearson equation in Proposition \ref{prop:Pearson2}
the condition \eqref{eq:recursion-alphas} also implies
\begin{equation}\label{eq:deoveralsquared}
\frac{\de^{(\nu)}_k}{\al^2_k} = 2^{k-1} \frac{(1+\frac{c^{(\nu)}}{d^{(\nu)}})_{k-1}}{(k-1)! \, (N-k+1)_{k-1}} \de^{(\nu)}_1,
\end{equation}
since $\al_1=1$,
which leads to a proof of Lemma \ref{lem:explWalnumn}, cf. \cite[Def.~2.1]{Koe:Rio:Rom}.

\begin{lem}\label{lem:explWalnumn}
Assuming \eqref{eq:recursion-alphas} we have
\[
\bigl( W^{(\al,\nu)}(x)\bigr)_{m,n}
= \al_m \al_n e^{-x^2} \de^{(\nu)}_1
\sum_{t=1}^{m\wedge n}
\frac{ 2^{t-1} (-N-\frac{c^{(\nu)}}{d^{(\nu)}})_{t-1} H_{m+n-2t}(x)}
{(m-t)! (n-t)! (t-1)! (-N+1)_{t-1}}
\]
\end{lem}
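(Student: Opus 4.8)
The plan is to compute the $(m,n)$-entry directly from the factorization \eqref{eq:weight_factorization-alpha} and then collapse the resulting double sum by means of the linearization formula for products of Hermite polynomials. Since $L^\al(x)=S^\al L(x)(S^\al)^{-1}$ has real polynomial entries $\bigl(L^\al(x)\bigr)_{m,k}=\tfrac{\al_m}{\al_k}\tfrac{H_{m-k}(x)}{(m-k)!}$ for $m\ge k$ (and $0$ otherwise) and $T^{(\nu)}(x)=e^{-x^2}\De^{(\nu)}$ is diagonal, one has
\[
\bigl(W^{(\al,\nu)}(x)\bigr)_{m,n}= e^{-x^2}\,\al_m\al_n\sum_{k=1}^{m\wedge n}\frac{\de^{(\nu)}_k}{\al_k^2}\,\frac{H_{m-k}(x)\,H_{n-k}(x)}{(m-k)!\,(n-k)!}.
\]
Next I would substitute the linearization identity $H_a(x)H_b(x)=\sum_{j\ge 0}\binom{a}{j}\binom{b}{j}\,j!\,2^j\,H_{a+b-2j}(x)$, which follows at once by multiplying two copies of \eqref{eq:genfunHermite}, reindex by $t=k+j$, and read off the coefficient of $H_{m+n-2t}(x)$. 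The two binomial coefficients cancel the factorials $(m-k)!$ and $(n-k)!$ in the denominator, leaving
\[
\bigl(W^{(\al,\nu)}(x)\bigr)_{m,n}= e^{-x^2}\,\al_m\al_n\sum_{t=1}^{m\wedge n}\frac{H_{m+n-2t}(x)}{(m-t)!\,(n-t)!}\sum_{k=1}^{t}\frac{\de^{(\nu)}_k}{\al_k^2}\,\frac{2^{t-k}}{(t-k)!}.
\]

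It then remains to evaluate the inner sum in closed form, and this is where \eqref{eq:deoveralsquared} enters. Writing $\ga=c^{(\nu)}/d^{(\nu)}$ and shifting $i=k-1$, $s=t-1$, the inner sum becomes $\de^{(\nu)}_1\,2^{t-1}\sum_{i=0}^{s}\tfrac{(1+\ga)_i}{i!\,(N-i)_i\,(s-i)!}$. Using the elementary identities $(N-i)_i=(-1)^i(-N+1)_i$ and $1/(s-i)!=(-1)^i(-s)_i/s!$ this turns into $\de^{(\nu)}_1\,2^{t-1}(s!)^{-1}\,\rFs{2}{1}{-s,\,1+\ga}{-N+1}{1}$, a terminating series. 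The Chu--Vandermonde summation now gives $\rFs{2}{1}{-s,\,1+\ga}{-N+1}{1}=\frac{(-N-\ga)_s}{(-N+1)_s}$, and inserting $s=t-1$ reproduces precisely the coefficient appearing in the statement. Along the way one checks that all Pochhammer symbols in the denominators are nonzero: since $t\le m\wedge n\le N$ the arguments $-N+1,\dots,-N+s$ lie in $\{-N+1,\dots,-1\}$, and the factors $(N-k+1),\dots,(N-1)$ in \eqref{eq:deoveralsquared} are positive.

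I expect the only real work to be the bookkeeping in the reindexing step — tracking the summation ranges and verifying that $\binom{m-k}{t-k}\binom{n-k}{t-k}(t-k)!/\bigl((m-k)!(n-k)!\bigr)$ simplifies to $1/\bigl((t-k)!(m-t)!(n-t)!\bigr)$ — together with spotting that the resulting one-parameter sum is a ${}_2F_1$ at argument $1$ amenable to Chu--Vandermonde. Neither presents a genuine obstacle; beyond careful algebra there is no subtlety.
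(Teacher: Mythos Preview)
Your proposal is correct and follows essentially the same route as the paper's proof: expand the factorization \eqref{eq:weight_factorization-alpha}, apply the Hermite linearization formula, reindex via $t=k+j$, insert \eqref{eq:deoveralsquared} to recognize the inner sum as a terminating ${}_2F_1$ at $1$, and sum by Chu--Vandermonde. The bookkeeping you flag is exactly what the paper does, and your simplification of the binomial factors matches.
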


Lemma 3.8 is the analogue of \cite[Def.~2.1, Thm.~2.2]{Koe:Rio:Rom},
\cite[Thm.~2.1]{KoelvPR2} for the matrix valued Gegenbauer type polynomials.
The underlying integral evaluation of a product of three Gegenbauer polynomials is interpreted in terms of a dual addition formula by
Koornwinder \cite{Koor-dualAF}. In \cite{Koor-dualAF} the limit case to
Hermite polynomials is considered, which is the linearization for
Hermite polynomials. This is precisely the formula we need to prove
Lemma \ref{lem:explWalnumn}.

\begin{cor}\label{cor:lem:explWalnumn}
 The $0$-th moment is
\begin{equation*}
\left( H^{(\al,\nu)}_0\right)_{m,n} =
\left( \int_\R W^{(\al,\nu)}(x)\, dx \right)_{m,n} =
\de_{m,n} \al^2_m \de^{(\nu)}_1 2^{m-1}\sqrt{\pi}
\frac{(-N-\frac{c^{(\nu)}}{d^{(\nu)}})_{m-1} }{(m-1)!\, (1-N)_{m-1}}.
\end{equation*}
\end{cor}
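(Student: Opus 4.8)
The plan is to read off the $0$-th moment directly from Lemma~\ref{lem:explWalnumn} by integrating the explicit formula for the matrix entries term by term over $\R$. The only ingredient beyond Lemma~\ref{lem:explWalnumn} is the classical scalar evaluation
\[
\int_\R e^{-x^2} H_k(x)\, dx = \sqrt{\pi}\,\de_{k,0}, \qquad k\in\N,
\]
which follows for odd $k$ from the parity of $H_k$, for even $k\geq 2$ from the orthogonality of $H_k$ against $H_0=1$ with respect to $e^{-x^2}\,dx$, and for $k=0$ from $\int_\R e^{-x^2}\,dx=\sqrt\pi$.

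Applying this to the sum in Lemma~\ref{lem:explWalnumn}, where the entire $x$-dependence of the $t$-th summand is carried by $e^{-x^2}H_{m+n-2t}(x)$, I see that $\int_\R \bigl(W^{(\al,\nu)}(x)\bigr)_{m,n}\,dx$ receives a non-zero contribution only from the index $t$ with $m+n-2t=0$. Since that index must also lie in the summation range $1\le t\le m\wedge n$, this forces $m+n$ to be even and $\tfrac12(m+n)\le m\wedge n$; assuming without loss of generality $m\le n$, the latter reads $n\le m$, so $m=n$ and $t=m$. Hence $H^{(\al,\nu)}_0$ is diagonal, accounting for the factor $\de_{m,n}$.

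To finish, I substitute $m=n$ and $t=m$ into the general summand of Lemma~\ref{lem:explWalnumn}, using $(m-t)!=0!=1$, $H_0=1$, and $\int_\R e^{-x^2}\,dx=\sqrt\pi$, which yields
\[
\bigl(H^{(\al,\nu)}_0\bigr)_{m,m} = \al_m^2\,\de^{(\nu)}_1\,2^{m-1}\sqrt{\pi}\;\frac{(-N-\frac{c^{(\nu)}}{d^{(\nu)}})_{m-1}}{(m-1)!\,(1-N)_{m-1}},
\]
which is exactly the asserted formula. I do not anticipate any real obstacle: the only point needing a moment's attention is the combinatorial observation that $m+n=2t$ together with $t\le m\wedge n$ can only hold when $m=n$, so that the off-diagonal entries integrate to zero.
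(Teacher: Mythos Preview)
Your proof is correct and follows exactly the approach indicated in the paper, which simply remarks that the integral of the expression in Lemma~\ref{lem:explWalnumn} is non-zero if and only if $m+n-2t=0$. You have merely spelled out in full detail the orthogonality fact $\int_\R e^{-x^2}H_k(x)\,dx=\sqrt\pi\,\de_{k,0}$ and the combinatorial observation that $t=\tfrac12(m+n)\le m\wedge n$ forces $m=n$, which the paper leaves implicit.
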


This follows since the integral of the expression in
Lemma \ref{lem:explWalnumn} is non-zero if and only $m+n-2t=0$.
In particular, the positivity condition on $c^{(\nu)}$ and $d^{(\nu)}$
shows that the $0$-th moment $H^{(\al,\nu)}_0$ is invertible.

\begin{proof}[Proof of Lemma \ref{lem:explWalnumn}] By definition \eqref{eq:weight_factorization-alpha} we
have
\begin{equation*}
\begin{split}
\bigl(W^{(\al,\nu)}(x)\bigr)_{m,n} & =
\sum_{k=1}^{m\wedge n} \frac{\al_m\al_n}{\al_k^2} \de^{(\nu)}_k e^{-x^2}
\frac{H_{m-k}(x)}{(m-k)!} \frac{H_{n-k}(x)}{(n-k)!} \\
& = \sum_{k=1}^{m\wedge n} \sum_{r=0}^{(m-k)\wedge (n-k)}
\frac{\al_m\al_n}{\al_k^2} \de^{(\nu)}_k e^{-x^2}
 \frac{2^r\, H_{m+n-2(k+r)}}{(m-k-r)! (n-k-r)! r!}
\end{split}
\end{equation*}
using the linearization formula $H_k(x)H_l(x) = \sum_{r=0}^{k\wedge l}
\binom{k}{r}\binom{l}{r} 2^r r!H_{k+l-2r}(x)$. Now put $t=k+r$, and we keep
the remaining sum over $k$. The remaining sum over $k$ becomes, using
\eqref{eq:deoveralsquared},
\[
\sum_{k=1}^t \frac{\de^{(\nu)}_k}{\al_k^2} \frac{2^{t-k}}{(t-k)!}
=
\frac{\de^{(\nu)}_1 2^{t-1}}{(t-1)!}
\sum_{k=1}^t
\frac{(1+\frac{c^{(\nu)}}{d^{(\nu)}})_{k-1}\, (-(t-1))_{k-1} }
{(k-1)! \, (N-k+1)_{k-1}}
\]
and the sum
is a terminating hypergeometric ${}_2F_1$ series evaluated at $1$. Since
such a ${}_2F_1$-series can be summed by the Chu-Vandermonde summation, see
e.g. \cite[Cor.~2.2.3]{AndrAR}, \cite[(1.4.3)]{Isma}, we have that the sum
equals
$\frac{(-N-\frac{c^{(\nu)}}{d^{(\nu)}})_{t-1} }
{(-N+1)_{t-1}}$. Collecting the terms gives the result.
\end{proof}


\subsection{Shift operators}\label{ssec:shift}

With the Pearson equations as in Propositions
\ref{prop:Pearson1} and \ref{prop:Pearson2} we
can consider the corresponding shift operators.
By
\[
\langle P, Q \rangle^{(\nu)} =
\int_\R P(x) W^{(\al,\nu)}(x) Q(x)^\ast\, dx
\]
we denote the matrix valued inner product for matrix valued functions $P$ and $Q$, whenever the integral converges. In particular, this is valid for $P$ and
$Q$ polynomials. We suppress the dependence on $\al$, since the shifts only involve $\nu$.

\begin{prop}
\label{prop:shift}
Assume \eqref{eq:condition_delta_Phi} and
\eqref{eq:recursion-alphas} for all $\nu$ of the form $\nu_0+k$, $k\in \N$,
for some fixed $\nu_0$.
\begin{enumerate}
\item[(i)] The first order matrix valued differential operator $S^{(\nu)}$ defined by
$$
(QS^{(\al,\nu)})(x)=\frac{dQ}{dx}(x)(\Phi^{(\al,\nu)}(x))^\ast +Q(x)(\Psi^{(\al,\nu)}(x))^\ast ,
$$
satisfies
$$\langle \frac{dP}{dx}, Q\rangle^{(\nu+1)} =  -\langle P, QS^{(\al,\nu)}\rangle^{(\nu)},$$
for matrix valued polynomials $P$ and $Q$.
\item[(ii)] The following relations hold
\begin{align*}
\frac{dP^{(\al,\nu)}_{n}}{dx}(x)&=n\, P_{n-1}^{(\al,\nu+1)}(x), \qquad (P^{(\al,\nu+1)}_{n-1}S^{(\nu)})(x)=K^{(\al,\nu)}P_{n}^{(\al,\nu)}(x),
\end{align*}
where $K^{(\al,\nu)}= \text{\rm{lc}}(\Psi^{(\al,\nu)})^\ast$ is
a diagonal invertible matrix.
\item[(iii)] The polynomials $P_{n}^{(\nu)}$ satisfy the following Rodrigues formula
$$
P^{(\al,\nu)}_{n}(x)=G^{(\al,\nu)}_{n} \left( \frac{d^nW^{(\al,\nu+n)}}{dx^n}(x)\right) W^{(\al,\nu)}(x)^{-1},
$$
where $G^{(\al,\nu)}_{n}$ is a diagonal invertible matrix;
$
G^{(\al,\nu)}_{n}=(K^{(\al,\nu)})^{-1}\cdots (K^{(\al, \nu+n-1)})^{-1}.
$
\item[(iv)] The squared norm $H_{n}^{(\al,\nu)}$ is given by
the diagonal invertible matrix
$$
H_{n}^{(\al,\nu)}=(-1)^n \, n! \, (K^{(\al,\nu)})^{-1} (K^{(\al,\nu+1)})^{-1} \cdots (K^{(\al,\nu+n-1)})^{-1}  H_{0}^{(\al,\nu+n)}.
$$
\end{enumerate}
\end{prop}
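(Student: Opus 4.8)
The plan is to derive (i) from integration by parts combined with the two Pearson equations (Propositions \ref{prop:Pearson1} and \ref{prop:Pearson2}), and then to obtain (ii), (iii), (iv) from (i) by the standard leading-term and orthogonality bookkeeping for orthogonal polynomials, recalled in Section \ref{ssec:MVOPS}. For (i): write $\langle \frac{dP}{dx},Q\rangle^{(\nu+1)}=\int_\R \frac{dP}{dx}(x)\,W^{(\al,\nu+1)}(x)\,Q(x)^\ast\,dx$ and integrate by parts. The boundary terms vanish because the factor $e^{-x^2}$ in $T^{(\nu+1)}$ makes $W^{(\al,\nu+1)}$ and its first derivative decay faster than any polynomial grows, leaving $-\int_\R P(x)\bigl(\frac{dW^{(\al,\nu+1)}}{dx}(x)Q(x)^\ast+W^{(\al,\nu+1)}(x)\frac{d(Q^\ast)}{dx}(x)\bigr)\,dx$. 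Substituting $W^{(\al,\nu+1)}=W^{(\al,\nu)}\Phi^{(\al,\nu)}$ and $\frac{dW^{(\al,\nu+1)}}{dx}=W^{(\al,\nu)}\Psi^{(\al,\nu)}$, and using that $(QS^{(\al,\nu)})(x)^\ast=\Phi^{(\al,\nu)}(x)\frac{d(Q^\ast)}{dx}(x)+\Psi^{(\al,\nu)}(x)Q(x)^\ast$, the integral becomes exactly $-\langle P,QS^{(\al,\nu)}\rangle^{(\nu)}$.

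For (ii), $P^{(\al,\nu)}_n$ is monic of degree $n$, so $\frac1n\frac{dP^{(\al,\nu)}_n}{dx}$ is monic of degree $n-1$; since $\Phi^{(\al,\nu)},\Psi^{(\al,\nu)}$ are of degree one, $(x^kI)S^{(\al,\nu)}$ has degree at most $k+1$, hence for $k\le n-2$ part (i) gives $\langle \frac{dP^{(\al,\nu)}_n}{dx},x^kI\rangle^{(\nu+1)}=-\langle P^{(\al,\nu)}_n,(x^kI)S^{(\al,\nu)}\rangle^{(\nu)}=0$; by uniqueness of the monic orthogonal polynomials this forces $\frac1n\frac{dP^{(\al,\nu)}_n}{dx}=P^{(\al,\nu+1)}_{n-1}$. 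For the second identity, $P^{(\al,\nu+1)}_{n-1}S^{(\al,\nu)}$ has degree $n$ with leading coefficient $\text{\rm{lc}}(\Psi^{(\al,\nu)})^\ast=K^{(\al,\nu)}$, which is diagonal and invertible by \eqref{eq:lcPsialnu}; taking the $\ast$-conjugate of (i), i.e. $\langle QS^{(\al,\nu)},P\rangle^{(\nu)}=-\langle Q,\frac{dP}{dx}\rangle^{(\nu+1)}$, with $Q=P^{(\al,\nu+1)}_{n-1}$, $P=x^kI$ and $k\le n-1$, yields $\langle P^{(\al,\nu+1)}_{n-1}S^{(\al,\nu)},x^kI\rangle^{(\nu)}=0$, so $(K^{(\al,\nu)})^{-1}P^{(\al,\nu+1)}_{n-1}S^{(\al,\nu)}$ is monic of degree $n$, orthogonal to all lower degrees for $W^{(\al,\nu)}$, hence equal to $P^{(\al,\nu)}_n$.

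For (iii) and (iv) the useful observation, obtained from the Pearson equations and self-adjointness of the weights (so that $(\Phi^{(\al,\nu)})^\ast=W^{(\al,\nu+1)}(W^{(\al,\nu)})^{-1}$ and $(\Psi^{(\al,\nu)})^\ast=\frac{dW^{(\al,\nu+1)}}{dx}(W^{(\al,\nu)})^{-1}$), is that
\[
(QS^{(\al,\nu)})(x)=\frac{d}{dx}\bigl(Q(x)W^{(\al,\nu+1)}(x)\bigr)\,W^{(\al,\nu)}(x)^{-1}.
\]
Applying this to $Q=P^{(\al,\nu+1)}_{n-1}$ and using (ii) gives $K^{(\al,\nu)}P^{(\al,\nu)}_n(x)W^{(\al,\nu)}(x)=\frac{d}{dx}\bigl(P^{(\al,\nu+1)}_{n-1}(x)W^{(\al,\nu+1)}(x)\bigr)$; iterating this relation $n$ times — pulling the successive constant diagonal matrices $(K^{(\al,\mu)})^{-1}$ through the derivatives — and using $P^{(\al,\nu+n)}_0=I$ produces the Rodrigues formula with $G^{(\al,\nu)}_n=(K^{(\al,\nu)})^{-1}\cdots(K^{(\al,\nu+n-1)})^{-1}$. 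For (iv), write $H^{(\al,\nu)}_n=\langle P^{(\al,\nu)}_n,P^{(\al,\nu)}_n\rangle^{(\nu)}=(K^{(\al,\nu)})^{-1}\langle P^{(\al,\nu+1)}_{n-1}S^{(\al,\nu)},P^{(\al,\nu)}_n\rangle^{(\nu)}$ and apply the $\ast$-conjugate of (i) together with the first identity of (ii) to get the recursion $H^{(\al,\nu)}_n=-n\,(K^{(\al,\nu)})^{-1}H^{(\al,\nu+1)}_{n-1}$; iterating down to $H^{(\al,\nu+n)}_0$, which is diagonal and invertible by Corollary \ref{cor:lem:explWalnumn}, gives the stated formula with the factor $(-1)^n n!$.

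The computations involved are routine; the two points deserving attention are the vanishing of the boundary terms in (i) (controlled entirely by the Gaussian in $T^{(\nu)}$) and, in (iii), the legitimacy of extracting the factors $K^{(\al,\mu)}$ from the iterated $\frac{d}{dx}$, which works precisely because they are constant diagonal matrices and therefore commute both with differentiation and with one another. Everything else is degree bookkeeping plus the uniqueness of monic matrix valued orthogonal polynomials.
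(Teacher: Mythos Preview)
Your proof is correct and follows exactly the standard shift-operator argument that the paper has in mind: the paper does not actually write out a proof of Proposition~\ref{prop:shift} but simply states that it is ``completely analogous to \cite[Cor.~2.5 and Thm.~3.1]{Koe:Rio:Rom}'' and leaves it to the reader. Your integration-by-parts derivation of (i), the leading-coefficient/uniqueness argument for (ii), the rewriting $(QS^{(\al,\nu)})(x)=\frac{d}{dx}\bigl(Q(x)W^{(\al,\nu+1)}(x)\bigr)W^{(\al,\nu)}(x)^{-1}$ followed by iteration for (iii), and the recursion $H^{(\al,\nu)}_n=-n(K^{(\al,\nu)})^{-1}H^{(\al,\nu+1)}_{n-1}$ for (iv) are precisely the steps carried out in that reference, so there is nothing to compare.

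One minor remark on your closing comment: in the iteration for (iii) you only need that the $(K^{(\al,\mu)})^{-1}$ are constant in $x$ so that they pull through $\frac{d}{dx}$; mutual commutativity is not required, since the factors already appear in the correct order $(K^{(\al,\nu)})^{-1}(K^{(\al,\nu+1)})^{-1}\cdots(K^{(\al,\nu+n-1)})^{-1}$ after iteration.
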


The proof of Proposition \ref{prop:shift} is
completely analogous to \cite[Cor.~2.5 and Thm.~3.1]{Koe:Rio:Rom},
and we leave this to the reader.

As an immediate corollary we find a second-order matrix valued differential operator for the
matrix valued orthogonal polynomials.

\begin{cor}
\label{cor:second_DO_Hermite}
Assume the conditions as in Proposition \ref{prop:shift}.
The differential operator
$$
\mathcal{D}^{(\al,\nu)} =
\left( \frac{d^2}{dx^2} \right) \Phi^{(\al,\nu)}(x)^\ast
+ \left( \frac{d}{dx} \right) \Psi^{(\al,\nu)}(x)^\ast
$$
is symmetric with respect to the weight $W^{(\al,\nu)}$. Moreover,
$$
P_{n}^{(\al,\nu)}\mathcal{D}^{(\al,\nu)}=
\La^{(\al,\nu)}_nP_{n}^{(\al,\nu)},\qquad \La_n^{(\al,\nu)}= n K^{(\al,\nu)}, \qquad n\in \mathbb{N}.$$
\end{cor}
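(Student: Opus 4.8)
The plan is to recognise $\mathcal{D}^{(\al,\nu)}$ as the composition of the two first order operators of Proposition \ref{prop:shift}: first differentiate (this carries $\nu$-objects to $(\nu+1)$-objects), then apply $S^{(\al,\nu)}$ (which carries them back). Indeed, directly from the definition of $S^{(\al,\nu)}$ in Proposition \ref{prop:shift}(i) and the definition of $\mathcal{D}^{(\al,\nu)}$, for any matrix valued $C^2$-function $Q$ one has
\[
\Bigl( \tfrac{dQ}{dx} \Bigr) S^{(\al,\nu)} = \frac{d^2Q}{dx^2}\,\bigl(\Phi^{(\al,\nu)}(x)\bigr)^\ast + \frac{dQ}{dx}\,\bigl(\Psi^{(\al,\nu)}(x)\bigr)^\ast = Q\,\mathcal{D}^{(\al,\nu)},
\]
so $\mathcal{D}^{(\al,\nu)}$ is the operator ``differentiate, then apply $S^{(\al,\nu)}$''.

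For the eigenvalue equation I would substitute $Q = P_n^{(\al,\nu)}$ and use Proposition \ref{prop:shift}(ii) twice. The relation $\frac{dP_n^{(\al,\nu)}}{dx} = n\,P_{n-1}^{(\al,\nu+1)}$ also gives $\frac{d^2P_n^{(\al,\nu)}}{dx^2} = n\,\frac{dP_{n-1}^{(\al,\nu+1)}}{dx}$, whence
\[
P_n^{(\al,\nu)}\,\mathcal{D}^{(\al,\nu)} = n\,\Bigl( \tfrac{dP_{n-1}^{(\al,\nu+1)}}{dx}\,\bigl(\Phi^{(\al,\nu)}\bigr)^\ast + P_{n-1}^{(\al,\nu+1)}\,\bigl(\Psi^{(\al,\nu)}\bigr)^\ast\Bigr) = n\,\bigl( P_{n-1}^{(\al,\nu+1)}\, S^{(\al,\nu)}\bigr),
\]
and then the second identity of Proposition \ref{prop:shift}(ii), $P_{n-1}^{(\al,\nu+1)}S^{(\al,\nu)} = K^{(\al,\nu)}P_n^{(\al,\nu)}$, yields $P_n^{(\al,\nu)}\mathcal{D}^{(\al,\nu)} = n\,K^{(\al,\nu)}\,P_n^{(\al,\nu)}$. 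Since $K^{(\al,\nu)} = \text{\rm{lc}}(\Psi^{(\al,\nu)})^\ast$ is a diagonal invertible matrix by Proposition \ref{prop:shift}(ii), so is $\La_n^{(\al,\nu)} = n\,K^{(\al,\nu)}$.

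For symmetry I would pair $\mathcal{D}^{(\al,\nu)}$ against polynomials and apply Proposition \ref{prop:shift}(i). For matrix valued polynomials $P,Q$, using the composition above and the conjugate-symmetry $\langle X,Y\rangle^{(\nu)\ast} = \langle Y,X\rangle^{(\nu)}$, together with Proposition \ref{prop:shift}(i) applied with $\frac{dP}{dx}$ in the second slot,
\[
\langle P\,\mathcal{D}^{(\al,\nu)}, Q\rangle^{(\nu)} = \bigl\langle \bigl(\tfrac{dP}{dx}\bigr)S^{(\al,\nu)}, Q\bigr\rangle^{(\nu)} = \bigl( \bigl\langle Q, \bigl(\tfrac{dP}{dx}\bigr)S^{(\al,\nu)}\bigr\rangle^{(\nu)} \bigr)^\ast = -\,\bigl\langle \tfrac{dP}{dx}, \tfrac{dQ}{dx}\bigr\rangle^{(\nu+1)},
\]
and a single direct application of Proposition \ref{prop:shift}(i) gives $\langle P, Q\,\mathcal{D}^{(\al,\nu)}\rangle^{(\nu)} = \langle P, \bigl(\tfrac{dQ}{dx}\bigr)S^{(\al,\nu)}\rangle^{(\nu)} = -\langle \tfrac{dP}{dx}, \tfrac{dQ}{dx}\rangle^{(\nu+1)}$ as well. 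Hence $\langle P\,\mathcal{D}^{(\al,\nu)}, Q\rangle^{(\nu)} = \langle P, Q\,\mathcal{D}^{(\al,\nu)}\rangle^{(\nu)}$ for all polynomials; since the polynomials are dense in the $L^2$-space of $W^{(\al,\nu)}$, and the weight has Gaussian decay so that the boundary terms \eqref{eq:symmetry-boundary1}, \eqref{eq:symmetry-boundary2} vanish, this is exactly the asserted symmetry of $\mathcal{D}^{(\al,\nu)}$ with respect to $W^{(\al,\nu)}$ (alternatively one could verify the Dur\'an conditions \eqref{eq:symmetry-conditions}, \eqref{eq:symmetry-conditions2} directly from the Pearson equations, but the route via Proposition \ref{prop:shift}(i) is cleaner). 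I do not expect a genuine obstacle here; the only points requiring care are keeping track of which inner product, $\langle\cdot,\cdot\rangle^{(\nu)}$ or $\langle\cdot,\cdot\rangle^{(\nu+1)}$, each pairing lives in, and confirming that the exponential decay of $W^{(\al,\nu)}$ indeed kills the boundary contributions.
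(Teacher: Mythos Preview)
Your argument is correct and is precisely the approach the paper has in mind: it omits the proof and points to \cite[(2.3)]{Koe:Rio:Rom}, where exactly this factorisation $\mathcal{D}^{(\al,\nu)} = \frac{d}{dx}\circ S^{(\al,\nu)}$ together with the adjointness relation of Proposition~\ref{prop:shift}(i) is used. The eigenvalue computation via Proposition~\ref{prop:shift}(ii) is the intended one as well. One small remark: the final sentence about density in $L^2$ is unnecessary and a little loose, since Proposition~\ref{prop:shift}(i) is stated for polynomials and the identity $\langle P\mathcal{D}^{(\al,\nu)},Q\rangle^{(\nu)}=\langle P,Q\mathcal{D}^{(\al,\nu)}\rangle^{(\nu)}$ for polynomials is already what is needed; if you want the statement for general $C^2$-functions, the clean route is the one you mention parenthetically, namely verifying \eqref{eq:symmetry-conditions}--\eqref{eq:symmetry-conditions2} directly from the Pearson equations \eqref{eq:Pearson_formal}.
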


Note that the matrix differential operators $\cD^{(\al,\nu)}$ of Corollary \ref{cor:second_DO_Hermite}
and $D$ of Proposition \ref{prop:symmetry_D} commute since the eigenvalue matrices commute, cf. \cite{GrunT}.

The proof of Corollary \ref{cor:second_DO_Hermite}
is completely analogous to the corresponding statement in
\cite[(2.3)]{Koe:Rio:Rom}, and we skip the proof.

\begin{rmk}
By Proposition \ref{prop:shift}(ii)  the differential operator $\mathcal{D}^{(\al,\nu)}$
has a factorization
in which first $\frac{d}{dx}$ is applied
and next $S^{(\al,\nu)}$.
The Darboux transform of the differential
operator is obtained by changing this order, and this gives a
second order matrix valued differential operator, say $\tilde{\cD}^{(\al,\nu)}$.
This operator has $P^{(\al,\nu+1)}_n$ as eigenfunctions by Proposition \ref{prop:shift};
\begin{gather*}
\Bigl( P^{(\al,\nu+1)}_n\tilde{\cD}^{(\al,\nu)}\Bigr)(x) =
\frac{d^2P^{(\al,\nu+1)}_n}{dx^2}(x) \Phi^{(\al,\nu)}(x)^\ast
+ \frac{dP^{(\al,\nu+1)}_n}{dx}(x)
\left(\frac{d\Phi^{(\al,\nu)}}{dx}(x)^\ast
+ \Psi^{(\al,\nu)}(x)^\ast \right)
\\ +  P^{(\al,\nu+1)}_n(x) \frac{d\Psi^{(\al,\nu)}}{dx}(x)^\ast =
\Xi_n^{(\al,\nu+1)} P^{(\al,\nu+1)}_n(x),
\qquad \Xi_n^{(\al,\nu+1)} = (n+1)\, \text{lc}(\Psi^{(\al,\nu)})^\ast.
\end{gather*}
Using the expression of Proposition \ref{prop:Pearson1} and \eqref{eq:lcPsialnu},
\eqref{eq:Psi0explicit} we can relate
$\Phi^{(\al,\nu)}$, $\Psi^{(\al,\nu)}$ to
$\Phi^{(\al,\nu+1)}$, $\Psi^{(\al,\nu+1)}$.
We obtain
\begin{equation*}
\begin{split}
\tilde{\cD}^{(\al,\nu)} = & \frac{d^{(\nu)}}{d^{(\nu+1)}} \cD^{(\al,\nu+1)} \\
& + d^{(\nu)}\left\{ \Bigl( \frac{d^2}{dx^2} + \frac{d}{dx} (A^\al-2x)\Bigr)
\Bigl( \frac{c^{(\nu)}}{d^{(\nu)}} - \frac{c^{(\nu+1)}}{d^{(\nu+1)}}\Bigr)
- \frac{d}{dx}A^\al + 2(J-N-1-\frac{c^{(\nu)}}{d^{(\nu)}})
\right\}
\end{split}
\end{equation*}
and for any of the examples in Sections \ref{ssec:explexample1},
\ref{ssec:explexample2}, \ref{ssec:explexample3},
the differential operator in curly brackets reduces to
the differential operator of Proposition \ref{prop:symmetry_D}
up to a multiple of the identity.
Since the eigenvalue matrices commute, these differential
operators commute, cf. \cite{GrunT}.
This is e.g. the case in \cite{Koe:Rio:Rom} for the differential
operators for the matrix valued Gegenbauer type polynomials.
In general the algebra of such differential
operators can be very complicated, see \cite{CaspY}.
\end{rmk}

Now Corollary \ref{cor:second_DO_Hermite} has the monic polynomials $P^{(\al,\nu)}_n$ as eigenfunctions.
By conjugation the differential operator of Corollary \ref{cor:second_DO_Hermite} using \eqref{eq:FLtildeF} we  find
a differential operator with $P^{(\al,\nu)}_nL^\al$ as eigenfunctions;
\begin{gather*}
\frac{d^2 P^{(\al,\nu)}_nL^\al}{dx^2}(x)
\left( \bigl( L^\al(x)\bigr)^{-1} \bigl( \Phi^{(\al,\nu)}(x)\bigr)^\ast
L^\al(x) \right)
+ \\
\frac{d P^{(\al,\nu)}_nL^\al}{dx}(x)
\left( 2\frac{d\bigl( L^\al\bigr)^{-1}}{dx}(x) \bigl( \Phi^{(\al,\nu)}(x)\bigr)^\ast
L^\al(x)  + \bigl( L^\al(x)\bigr)^{-1} \bigl( \Psi^{(\al,\nu)}(x)\bigr)^\ast
L^\al(x) \right) + \\
P^{(\al,\nu)}_nL^\al(x)
\left( \frac{d^2\bigl( L^\al\bigr)^{-1}}{dx^2}(x) \bigl( \Phi^{(\al,\nu)}(x)\bigr)^\ast
L^\al(x)  +  \frac{d\bigl( L^\al\bigr)^{-1}}{dx}(x) \bigl( \Psi^{(\al,\nu)}(x)\bigr)^\ast
L^\al(x) \right)\\ = \La^{(\al,\nu)}_n P^{(\al,\nu)}_nL^\al(x).
\end{gather*}
By Proposition \ref{prop:LisexpandLinv} we have
$\frac{d\bigl( L^\al\bigr)^{-1}}{dx}(x) = -A^\al \bigl( L^\al\bigr)^{-1}(x)$ and
$\frac{d^2\bigl( L^\al\bigr)^{-1}}{dx^2}(x) = (A^\al)^2 \bigl( L^\al\bigr)^{-1}(x)$
so that all terms can be evaluated using \eqref{eq:LinvPhistarL}
and \eqref{eq:LinvPsistarL}. This yields, after simplicification,
\begin{gather*}
\frac{d^2 P^{(\al,\nu)}_nL^\al}{dx^2}(x)
\left( d^{(\nu)}J+c^{(\nu)} \right)
+ \\
\frac{d P^{(\al,\nu)}_nL^\al}{dx}(x)
\left( -(A^\al+2x)(d^{(\nu)}J+c^{(\nu)})
+ \De^{(\nu+1)}(A^\al)^\ast (\De^{(\nu)})^{-1}\right) + \\
P^{(\al,\nu)}_nL^\al(x)
\left( 2A^\al x (d^{(\nu)}J+c^{(\nu)})
- A^\al \De^{(\nu+1)}(A^\al)^\ast (\De^{(\nu)})^{-1}\right) = \La^{(\al,\nu)}_n P^{(\al,\nu)}_nL^\al(x).
\end{gather*}
Now we take the $(r,s)$-th entry of this identity, using
$(P^{(\al,\nu)}_nL^\al)(x)_{r,s} = c^{(\al,\nu)}_{r,s}(n) H_{n+r-s}(x)$.
This gives 6 polynomial terms on the left hand side, and one on the right hand side. Considering the leading coefficient, i.e.
the coefficient of $x^{n+r-s}$, on both sides and using that
the leading coefficient of $H_m$ is $2^m$ we find
a three-term recursion for $c_{s}= c^{(\al,\nu)}_{r,s}(n)$;
\begin{equation}\label{eq:3termcoeff}
\begin{split}
&-2(n+r-s)(d^{(\nu)}s+c^{(\nu)})c_{s}
+ 4(n+r-s+1) \frac{\de^{(\nu+1)}_{s-1}}{\de^{(\nu)}_s}\frac{\al_s}{\al_{s-1}}
c_{s-1} + \\
&  2\frac{\al_{s+1}}{\al_s} (d^{(\nu)}s+c^{(\nu)}) c_{s+1} - 4 \frac{\al_s^2}{\al_{s-1}^2}\frac{\de^{(\nu+1)}_{s-1}}{\de^{(\nu)}_s}
c_{s}
= 2n\bigl( d^{(\nu)}(r-(N+1)) -c^{(\nu)}\bigr) c_{s}
\end{split}
\end{equation}
We can also obtain a three-term recurrence for the coefficients
by evaluating the differential equation at $0$, and this leads to
the same three term recurrence relation
\eqref{eq:3termcoeff}. In \eqref{eq:3termcoeff}
we eliminate the $\de^{(\nu)}_s$ using
\eqref{eq:condition_delta_Phi},
\eqref{eq:recursion-alphas}, and next we
put $c_s=\al_s^{-1} (N-s+1)_{s-1} \hat{c}_s$.
Then \eqref{eq:3termcoeff} becomes
\begin{equation}\label{eq:3termcoeffdualHahnpols}
\begin{split}
&(s+\frac{c^{(\nu)}}{d^{(\nu)}})(s-N) \hat{c}_{s+1}
- \Bigl( (s+\frac{c^{(\nu)}}{d^{(\nu)}}) (s-n+r)
+ (s-1)(s-N-1)\Bigr) \hat{c}_s  \\
&\qquad  + (s-n-r-1)(s-1) \hat{c}_{s-1} =
n(N+1-r+\frac{c^{(\nu)}}{d^{(\nu)}}) \hat{c}_s,
\qquad s\in \{1,\cdots, N\}
\end{split}
\end{equation}
and \eqref{eq:3termcoeffdualHahnpols} is
the three-term recurrence relations for the dual
Hahn polynomials, see e.g. \cite[(1.6.3)]{KoekS}, up to a rewrite of the
coefficients of $\hat{c}_s$ in \eqref{eq:3termcoeffdualHahnpols}.

\begin{thm}\label{thm:MVinHermite}
The matrix entries of the monic matrix valued
Hermite type polynomials are given by
\begin{gather*}
\bigl( P^{(\al,\nu)}_n(x)\bigr)_{r,t} =
\frac{\al_r}{2^n\al_t} \frac{1}{(r-1)!}
 \left(\prod_{k=0}^{n-1}
\frac{(1+ \frac{c^{(\nu+k)}}{d^{(\nu+k)}})}{(N-r+1+ \frac{c^{(\nu+k)}}{d^{(\nu+k)}})} \right) \\
\times \sum_{s=t}^{N \wedge (n+r)} \frac{i^{s-t} (N-s+1)_{s-1}}{(s-t)!}
\, \rFs{3}{2}{1-s, r-N, n+1+ c^{(\nu)}/d^{(\nu)}}
{1-N,\, 1+ c^{(\nu)}/d^{(\nu)}}{1}
 H_{n+r-s}(x)
 H_{s-t}(ix)
\end{gather*}
\end{thm}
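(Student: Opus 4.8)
\medskip\noindent\textbf{Sketch of a proof of Theorem \ref{thm:MVinHermite}.}
The plan is to read off the entries of $P^{(\al,\nu)}_n$ from those of $P^{(\al,\nu)}_nL^\al$ — which, by the discussion preceding the theorem, are scalar multiples of single Hermite polynomials with dual Hahn coefficients — and then to fix the one remaining normalising constant by means of the shift relation of Proposition \ref{prop:shift}(ii). Recall that $(P^{(\al,\nu)}_nL^\al)(x)_{r,s}=c_{r,s}(n)H_{n+r-s}(x)$, where $c_{r,s}(n)=0$ when $s>n+r$ (for such $s$ the second order equation for the $(r,s)$-entry has eigenvalue $2(s-n-r)>0$ and hence only the zero polynomial solution), and that after the substitution $\hat c_s=\al_s c_{r,s}(n)/(N-s+1)_{s-1}$ the $\hat c_s$ satisfy the dual Hahn three-term recurrence \eqref{eq:3termcoeffdualHahnpols} for $1\le s\le N\wedge(n+r)$. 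Since the coefficient of $\hat c_{s-1}$ in \eqref{eq:3termcoeffdualHahnpols} vanishes at $s=1$, the recursion is driven by the single value $\hat c_1$, so
\[
c_{r,s}(n)=C^{(\nu)}(r,n)\,\frac{(N-s+1)_{s-1}}{\al_s}\,F^{(\nu)}_s(r,n),\qquad
F^{(\nu)}_s(r,n)=\rFs{3}{2}{1-s,\ r-N,\ n+1+c^{(\nu)}/d^{(\nu)}}{1-N,\ 1+c^{(\nu)}/d^{(\nu)}}{1},
\]
the ${}_3F_2$ being the standard dual Hahn polynomial normalised so that $F^{(\nu)}_1=1$; see \cite[Ch.~1]{KoekS}. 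Here $C^{(\nu)}(r,n):=\hat c_1$ is still to be determined.

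To find $C^{(\nu)}(r,n)$ I would differentiate $P^{(\al,\nu)}_nL^\al$, using $\tfrac{dL^\al}{dx}=A^\al L^\al=L^\al A^\al$ (Proposition \ref{prop:LisexpandLinv}, Corollary \ref{cor:prop:LisexpandLinv}) and Proposition \ref{prop:shift}(ii) to get $\tfrac{d}{dx}(P^{(\al,\nu)}_nL^\al)=n\,P^{(\al,\nu+1)}_{n-1}L^\al+(P^{(\al,\nu)}_nL^\al)A^\al$. Its $(r,s)$-entry reads as an identity among multiples of $H_{n+r-s-1}$; with $H_m'=2mH_{m-1}$, $(A^\al)_{s+1,s}=2\al_{s+1}/\al_s$ and $(N-s)_s/(N-s+1)_{s-1}=N-s$ it becomes, after inserting the display above,
\[
2\,C^{(\nu)}(r,n)\bigl[(n+r-s)F^{(\nu)}_s(r,n)-(N-s)F^{(\nu)}_{s+1}(r,n)\bigr]=n\,C^{(\nu+1)}(r,n-1)\,F^{(\nu+1)}_s(r,n-1).
\]
Specialising to $s=1$, where $F^{(\nu)}_1=F^{(\nu+1)}_1=1$ and $F^{(\nu)}_2(r,n)$ is an elementary two-term sum, a short computation gives $(n+r-1)-(N-1)F^{(\nu)}_2(r,n)=n\,(N-r+1+c^{(\nu)}/d^{(\nu)})/(1+c^{(\nu)}/d^{(\nu)})$, whence
\[
\frac{C^{(\nu)}(r,n)}{C^{(\nu+1)}(r,n-1)}=\frac12\,\frac{1+c^{(\nu)}/d^{(\nu)}}{N-r+1+c^{(\nu)}/d^{(\nu)}},\qquad n\ge1.
\]
Iterating this down to $n=0$, where $P^{(\al,\nu)}_0=I$ forces $C^{(\nu)}(r,0)=\al_r/(r-1)!$ — compare the $(r,r)$-entry and use Chu--Vandermonde, $(N-r+1)_{r-1}\,\rFs{2}{1}{1-r,\ r-N}{1-N}{1}=(r-1)!$, cf. \cite[Cor.~2.2.3]{AndrAR}, \cite[(1.4.3)]{Isma} — reproduces exactly the product constant of the statement.

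It then remains to assemble: $P^{(\al,\nu)}_n=(P^{(\al,\nu)}_nL^\al)(L^\al)^{-1}$, and by Proposition \ref{prop:LisexpandLinv} together with $L^\al=S^\al L(S^\al)^{-1}$ one has $((L^\al)^{-1})_{s,t}=\tfrac{\al_s}{\al_t}\,i^{s-t}H_{s-t}(ix)/(s-t)!$ for $s\ge t$, so
\[
\bigl(P^{(\al,\nu)}_n(x)\bigr)_{r,t}=\sum_{s=t}^{N\wedge(n+r)}c_{r,s}(n)\,H_{n+r-s}(x)\,\frac{\al_s}{\al_t}\,\frac{i^{s-t}}{(s-t)!}\,H_{s-t}(ix),
\]
the summation truncating at $N\wedge(n+r)$ because $c_{r,s}(n)=0$ for $s>n+r$; substituting the expression for $c_{r,s}(n)$ obtained above gives the claimed formula.

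The main obstacle is pinning down $C^{(\nu)}(r,n)$. Carrying the shift relation at the level of $P^{(\al,\nu)}_nL^\al$ rather than of $P^{(\al,\nu)}_n$ keeps the recursion for the Hermite coefficients free of the $i$-factors produced by $H_m(ix)$, and one must specialise at $s=1$ and not, say, at $s=N$, where the identity degenerates to $0=0$ whenever $1\le n\le N-r$; apart from that the ${}_3F_2$ work is only the kind of terminating Chu--Vandermonde evaluation already used for Lemma \ref{lem:explWalnumn}. (Alternatively $C^{(\nu)}(r,n)$ could be fixed from the monic normalisation of $P^{(\al,\nu)}_n$, at the price of a less transparent ${}_3F_2$-summation.)
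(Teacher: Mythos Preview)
Your argument is correct and follows essentially the same route as the paper's own proof: identify the dual Hahn ${}_3F_2$ from the three-term recurrence \eqref{eq:3termcoeffdualHahnpols}, differentiate $(P^{(\al,\nu)}_nL^\al)_{r,s}$ using Proposition \ref{prop:shift}(ii) and $\tfrac{dL^\al}{dx}=L^\al A^\al$, specialise to $s=1$ to obtain the recursion $2(N-r+1+c^{(\nu)}/d^{(\nu)})C^{(\nu)}(r,n)=(1+c^{(\nu)}/d^{(\nu)})C^{(\nu+1)}(r,n-1)$, read off the initial value from $P_0=I$, and assemble via $(L^\al)^{-1}$. The only cosmetic difference is that the paper differentiates the $(r,1)$-entry directly rather than writing the general-$s$ identity first, and it reads $c_{r,1}(0)=\al_r/(r-1)!$ straight from $(L^\al)_{r,1}$ rather than via Chu--Vandermonde on the $(r,r)$-entry.
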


Theorem \ref{thm:MVinHermite} is the analogue of \cite[Thm.~3.4]{Koe:Rio:Rom}, in which
the result of Cagliero and Koornwinder \cite{CaglK} plays an essential role.

\begin{rmk}\label{rmk:thm:MVinHermite}
(i) With the explicit evaluation of  the coefficients $c^{(\al,\nu)}_{r,s}$ in the proof
of Theorem \ref{thm:MVinHermite}, see
\eqref{eq:crsnas3F2} and \eqref{eq:crnsinitialvalue}, we can write out the
orthogonality relations for $P_n^{(\al,\nu)}L^\al$ with respect the diagonal weight
$e^{-x^2}\De^{(\nu)}$. Then the orthogonality for the degree and the fact that
the squared norm $H^{(\al,\nu)}_n$ is diagonal, see Proposition \ref{prop:shift}(iv),
follows from the orthogonality of the Hermite polynomials and the orthogonality of
the Hahn polynomials, see \cite[\S 1.5]{KoekS}. \par\noindent
(ii) The case $n=0$ gives $P_0(x)=I$, since the ${}_3F_2$-series
reduces to a terminating ${}_2F_1$-series summable by the Chu-Vandermonde sum. The
resulting sum is precisely \eqref{eq:Hermiteinverse}. More general, if $n<N$
then for certain matrix entries, the sum terminates at $N+r$. In this case,
a transformation formula for ${}_3F_2$-series, see e.g. \cite[p.~142]{AndrAR}, can be
used.
\par\noindent
(iii) Note that the left hand side is of degree at most $n$, whereas the right hand side
initially is of degree $n+r-t$, which can be larger. So these sums give zero, e.g.
by considering the leading coefficient of the right hand side for $r>t$ we find
\[
\sum_{s=t}^{N \wedge (n+r)} \frac{(-1)^{s-t} (N-s+1)_{s-1}}{(s-t)!}
\, \rFs{3}{2}{1-s, r-N, n+1+ c^{(\nu)}/d^{(\nu)}}
{1-N,\, 1+ c^{(\nu)}/d^{(\nu)}}{1} =0.
\]
\end{rmk}

\begin{proof}[Proof of Theorem \ref{thm:MVinHermite}]
Comparing the recurrence relation \eqref{eq:3termcoeffdualHahnpols}
with \cite[(1.6.3)]{KoekS} in which we replace $(\ga,\de,N,n,x)$
by $(\frac{c^{(\nu)}}{d^{(\nu)}},n+r-N,N-1,s-1,N-r)$
gives
\begin{equation}\label{eq:crsnas3F2}
c^{(\al,\nu)}_{r,s}(n) = c^{(\al,\nu)}_{r,1}(n)
\al_s^{-1} (N-s+1)_{s-1}
\, \rFs{3}{2}{1-s, r-N, n+1+ c^{(\nu)}/d^{(\nu)}}
{1-N,\, 1+ c^{(\nu)}/d^{(\nu)}}{1}
\end{equation}
and by differentiating $(P^{(\al,\nu)}_nL^\al)_{r,1}(x)
= c^{(\al,\nu)}_{r,1}(n) H_{n+r-1}$ and using Proposition \ref{prop:shift},
Proposition \ref{prop:shift}(ii) we
get
\[
2(n+r-1) c^{(\al,\nu)}_{r,1}(n) = n
c^{(\al,\nu+1)}_{r,1}(n-1) +2\al_2
c^{(\al,\nu)}_{r,2}(n).
\]
Since we already have
\[
 c^{(\al,\nu)}_{r,2}(n) =
 \frac{-\bigl( (N-r)n +(1-r)(1+ c^{(\nu)}/d^{(\nu)})\bigr)}
 {\al_2(1+ c^{(\nu)}/d^{(\nu)})}
 c^{(\al,\nu)}_{r,1}(n)
\]
by expanding the ${}_3F_2$-series, we get a
simple recursion. After simplifying
\begin{gather}
2\frac{(N-r+1+ c^{(\nu)}/d^{(\nu)})}{(1+ c^{(\nu)}/d^{(\nu)})}
  c^{(\al,\nu)}_{r,1}(n)
=   c^{(\al,\nu+1)}_{r,1}(n-1)\quad
\Longrightarrow  \nonumber\\
c^{(\al,\nu)}_{r,1}(n) =
c_{r,1}^{(\al,\nu+n)}(0)\,\, 2^{-n}
\prod_{k=0}^{n-1}
\frac{(1+ c^{(\nu+k)}/d^{(\nu+k)})}{(N-r+1+ c^{(\nu+k)}/d^{(\nu+k)})}, \label{eq:crnsinitialvalue}
\end{gather}
and since we have
the initial value $c^{(\al,\nu+n)}_{r,1}(0)=\al_r/(r-1)!$,
we have an explicit expression for
$P^{(\nu,\al)}_n(x) L^\al(x) =
c^{(\al,\nu)}_{r,s}(n) H_{n+r-s}(x)$.
The result follows from
\[
P^{(\nu,\al)}_n(x)_{r,t} =
\sum_{s=1}^N \bigl(P^{(\nu,\al)}_n(x)L^\al(x)\bigr)_{r,s} \bigl( L^\al(x)\bigr)^{-1}_{s,t}
= \sum_{s=t}^N
c^{(\al,\nu)}_{r,s}(n) H_{n+r-s}(x) \frac{\al_s}{\al_t}
i^{s-t} \frac{H_{s-t}(ix)}{(s-t)!}
\]
by Lemma \ref{lem:Lm1JL}.
\end{proof}

As an application of Theorem \ref{thm:MVinHermite} we derive a connection formula
for the matrix valued Hermite type polynomials. Needless to say that this has no
scalar analogue, since the Hermite polynomials have no degree of freedom as they are at the bottom
of the Askey scheme.
In order to prove the connection formula, we require a connection formula for Hahn
polynomials due to Gasper \cite{Gasp-JMAA}. Take $a=\al$ in \cite[(1.4), (4.1)]{Gasp-JMAA},
so that the ${}_3F_2$-series in \cite[(4.1)]{Gasp-JMAA} reduces to a summable ${}_2F_1$-series.
Reversing the sum in \cite[(4.1)]{Gasp-JMAA} gives the special case of Gasper's
identity;
\begin{equation}\label{eq:Hahnconnectioncoef}
\begin{split}
Q_n(x;a,b,N)=\sum_{k=0}^n &(-1)^p \binom{n}{p} \frac{ (b-\be)_p (n+a+b+1)_{n-p}}{(n-p+a+\be+1)_n}
\\ &\times \frac{(2n-2p+a+\be+1)}{(2n-p+a+\be+1)}
Q_{n-p}(x;a,\be,N).
\end{split}
\end{equation}
Using $(a+1)_n Q_n(x;a,b,N)= (-1)^n(b+1)_n Q_n(N-x;b,a,N)$, see e.g.
\cite[(2.7)]{Gasp-JMAA} in \eqref{eq:Hahnconnectioncoef}
we get the connection coefficient formula for Hahn polynomials with the second parameter equal and
with different first parameter. Calling $y=N-x$, we get an expansion relation for ${}_3F_2$-series
arising in Theorem \ref{thm:MVinHermite}. Upon replacing $y\mapsto s-1$, $N\mapsto N-1$, $n\mapsto N-r$,
$a\mapsto r-N+n$, $b\mapsto c^{(\nu)}/d^{(\nu)}$, $\be\mapsto c^{(\la)}/d^{(\la)}$ we get
\begin{gather}
\rFs{3}{2}{1-s, r-N, n+1+ c^{(\nu)}/d^{(\nu)}}
{1-N,\, 1+ c^{(\nu)}/d^{(\nu)}}{1}  =
\sum_{p=0}^{N-r} \binom{N-r}{p} (\frac{c^{(\nu)}}{d^{(\nu)}}-\frac{c^{(\la)}}{d^{(\la)}})_p
\frac{(\frac{c^{(\la)}}{d^{(\la)}}+1)_{N-r-p}}{(\frac{c^{(\nu)}}{d^{(\nu)}}+1)_{N-r-p}}
\nonumber  \\
\times  \frac{(N-r-2p+n+ 1+ \frac{c^{(\la)}}{d^{(\la)}})}{(N-r-p+n+ 1+ \frac{c^{(\la)}}{d^{(\la)}})}
\frac{(-1)^p (-n)_p (n+1+\frac{c^{(\nu)}}{d^{(\nu)}})_{N-r-p}}{(p+n+1+\frac{c^{(\la)}}{d^{(\la)}})_{N-r}}
\nonumber \\
\times
\rFs{3}{2}{1-s, r-N+p, n-p+1+ c^{(\la)}/d^{(\la)}}
{1-N,\, 1+ c^{(\la)}/d^{(\la)}}{1}. \label{eq:explconnection}
\end{gather}
This is the key identity for the connection formula in Proposition \ref{prop:Hermiteconnection}.

\begin{prop}\label{prop:Hermiteconnection}
The matrix valued Hermite type polynomials satisfy
$$
P_n^{(\al,\nu)}(x) = \sum_{k=0}^N A^{(\nu,\la)}_k(n) P^{(\al,\la)}_{n-k}(x),
$$
where the matrix entry $A^{(\nu,\la)}_k(n)_{r,t}=0$ unless $t=r+k$ and
\begin{multline*}
A^{(\nu,\la)}_k(n)_{r,r+k}= 2^{-k}
\prod_{p=0}^{n-1} \frac{(1+\frac{c^{(\nu+p)}}{d^{(\nu+p)}})}{(N-r+1+\frac{c^{(\nu+p)}}{d^{(\nu+p)}})}
\prod_{p=0}^{n-k-1} \frac{(N-r+1+\frac{c^{(\la+p)}}{d^{(\la+p)}})}{(1+\frac{c^{(\la+p)}}{d^{(\la+p)}})} \\
\times \binom{N-r}{k} (\frac{c^{(\nu)}}{d^{(\nu)}}-\frac{c^{(\la)}}{d^{(\la)}})_k
\frac{(\frac{c^{(\la)}}{d^{(\la)}}+1)_{N-r-k}}{(\frac{c^{(\nu)}}{d^{(\nu)}}+1)_{N-r-k}}
\frac{(N-r-2k+n+ 1+ \frac{c^{(\la)}}{d^{(\la)}})}{(N-r-k+n+ 1+ \frac{c^{(\la)}}{d^{(\la)}})} \\
\times \frac{(-1)^k (-n)_k (n+1+\frac{c^{(\nu)}}{d^{(\nu)}})_{N-r-k}}{(k+n+1+\frac{c^{(\la)}}{d^{(\la)}})_{N-r}}
\end{multline*}
\end{prop}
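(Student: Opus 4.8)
The plan is to reduce the matrix-valued connection formula to a scalar identity for ${}_3F_2$-series and then feed in the Hahn connection formula \eqref{eq:explconnection} (Gasper's identity in the degenerate case of one repeated parameter). Recall from the proof of Theorem \ref{thm:MVinHermite} that for $\mu\in\{\nu,\la\}$ one has $(P^{(\al,\mu)}_m(x)L^\al(x))_{r,s}=c^{(\al,\mu)}_{r,s}(m)\,H_{m+r-s}(x)$, where by \eqref{eq:crsnas3F2} one has $c^{(\al,\mu)}_{r,s}(m)=c^{(\al,\mu)}_{r,1}(m)\,\al_s^{-1}(N-s+1)_{s-1}\,\rFs{3}{2}{1-s, r-N, m+1+c^{(\mu)}/d^{(\mu)}}{1-N, 1+c^{(\mu)}/d^{(\mu)}}{1}$ and by \eqref{eq:crnsinitialvalue} $c^{(\al,\mu)}_{r,1}(m)=\frac{\al_r}{(r-1)!}\,2^{-m}\prod_{j=0}^{m-1}\frac{1+c^{(\mu+j)}/d^{(\mu+j)}}{N-r+1+c^{(\mu+j)}/d^{(\mu+j)}}$. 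Since $L^\al(x)$ is invertible for every $x$, proving $P^{(\al,\nu)}_n=\sum_kA^{(\nu,\la)}_k(n)P^{(\al,\la)}_{n-k}$ is equivalent to proving the identity obtained from it by right multiplication by $L^\al$, and I would analyse the latter entry by entry, with $r$ fixed.

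Starting from $(P^{(\al,\nu)}_nL^\al)_{r,s}=c^{(\al,\nu)}_{r,s}(n)\,H_{n+r-s}(x)$, I would substitute \eqref{eq:explconnection} for the ${}_3F_2$-series carrying the parameter $c^{(\nu)}/d^{(\nu)}$. Every term of the resulting finite sum shares the factors $\al_s^{-1}(N-s+1)_{s-1}H_{n+r-s}(x)$, and the $k$-th term also carries the coefficient $\binom{N-r}{k}(\frac{c^{(\nu)}}{d^{(\nu)}}-\frac{c^{(\la)}}{d^{(\la)}})_k\cdots$ of \eqref{eq:explconnection}, which is independent of $s$ and $x$, together with the series $\rFs{3}{2}{1-s, (r+k)-N, (n-k)+1+c^{(\la)}/d^{(\la)}}{1-N, 1+c^{(\la)}/d^{(\la)}}{1}$; since $(n-k)+(r+k)-s=n+r-s$, comparison with the formula for $c^{(\al,\la)}_{r+k,s}(n-k)$ shows that the $k$-th term equals $\beta_{r,k}(n)\,(P^{(\al,\la)}_{n-k}L^\al)_{r+k,s}$ for a scalar $\beta_{r,k}(n)$ not depending on $s$, and the range $0\le k\le N-r$ of the sum in \eqref{eq:explconnection} is exactly the range in which the row index $r+k$ makes sense. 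Hence $(P^{(\al,\nu)}_nL^\al)_{r,s}=\sum_{k\ge0}\beta_{r,k}(n)(P^{(\al,\la)}_{n-k}L^\al)_{r+k,s}$ for all $r,s$, i.e. $P^{(\al,\nu)}_nL^\al=\sum_{k\ge0}A^{(\nu,\la)}_k(n)\,P^{(\al,\la)}_{n-k}L^\al$ with $A^{(\nu,\la)}_k(n)$ the matrix whose only nonzero entries are $(A^{(\nu,\la)}_k(n))_{r,r+k}=\beta_{r,k}(n)$. Right multiplication by $L^\al(x)^{-1}$ produces the connection formula, and because the $P^{(\al,\la)}_m$ are monic of degree $m$ the connection matrices are unique, so this simultaneously gives the asserted superdiagonal support of $A^{(\nu,\la)}_k(n)$ and, for $k=0$, $A^{(\nu,\la)}_0(n)=I$.

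Finally, $\beta_{r,k}(n)$ equals $c^{(\al,\nu)}_{r,1}(n)\bigl(c^{(\al,\la)}_{r+k,1}(n-k)\bigr)^{-1}$ times the $k$-th connection coefficient of \eqref{eq:explconnection}; inserting the closed forms above, the powers of two contribute $2^{-k}$, the two finite products of shifted quotients of $c^{(\cdot)}/d^{(\cdot)}$-symbols partly cancel, and combining the remainder with the binomial and Pochhammer factors of Gasper's formula gives the stated expression for $A^{(\nu,\la)}_k(n)_{r,r+k}$; the factor $(-n)_k$ there makes the sum effectively terminate at $k=n$. The conceptual step is the substitution of \eqref{eq:explconnection}; I expect the main obstacle to be the purely mechanical one of keeping the prefactor $c^{(\al,\nu)}_{r,1}(n)/c^{(\al,\la)}_{r+k,1}(n-k)$ — and with it all the $\al_j$'s, factorials, powers of two and Pochhammer symbols — organized well enough to land on the displayed closed form.
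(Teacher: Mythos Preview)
Your proposal is correct and follows essentially the same route as the paper: both arguments multiply the identity on the right by the invertible matrix $L^\al(x)$, use the factorization $(P^{(\al,\mu)}_m L^\al)_{r,s}=c^{(\al,\mu)}_{r,s}(m)H_{m+r-s}(x)$ from Theorem~\ref{thm:MVinHermite}, and then reduce the resulting scalar relation between the coefficients $c^{(\al,\nu)}_{r,s}(n)$ and $c^{(\al,\la)}_{r+k,s}(n-k)$ to the Hahn connection identity \eqref{eq:explconnection}. The only cosmetic difference is that you run the argument constructively (insert \eqref{eq:explconnection} into $c^{(\al,\nu)}_{r,s}(n)$ and read off the matrices $A^{(\nu,\la)}_k(n)$), whereas the paper writes down the matrices first and verifies that the induced scalar identity is \eqref{eq:explconnection}.
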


The general connection formula of Proposition \ref{prop:Hermiteconnection} simplifies
for the special cases of Sections \ref{ssec:explexample1}, \ref{ssec:explexample2}, \ref{ssec:explexample3}.
Indeed, in the special cases of Sections \ref{ssec:explexample1}, \ref{ssec:explexample2} we find
\begin{gather}
A^{(\nu,\la)}_k(n)_{r,r+k}=
\binom{N-r}{k}  \frac{(-2)^{-k}\, (\la-\nu)_k  (-n)_k}{(k+n+1+\la)_{N-r}}
\frac{(n+1+\nu)_n}{(N-r+1+\nu)_n}
\frac{(n-k+1+\la)_{N-r}}{(N-r-k+1+\la)_{k}} \nonumber \\
\times \frac{(N-r-2k+n+ 1+ \la)}{(N-r-k+n+ 1+ \la)}. \label{eq:connectioncoefspecialcase}
\end{gather}
The special case of Section \ref{ssec:explexample3} can also be simplified
as in \eqref{eq:connectioncoefspecialcase}. In \eqref{eq:connectioncoefspecialcase}
replace $\la$ and $\nu$ by $\la+C\rho^{-1}$ and $\nu+C\rho^{-1}$ to get the
corresponding expression for the special case of Section \ref{ssec:explexample3}.

\begin{proof} Instead of proving the identity, we multiply by the invertible matrix $L^{\al}(x)$.
Taking the $(r,s)$-matrix entry, using $A^{(\nu,\la)}_k(n)_{r,t}=0$ for $t\not=r+k$ and Theorem \ref{thm:MVinHermite} we get
an identity where on both sides $H_{n+r-s}(x)$ can be cancelled. The identity to be proved is
\[
c^{(\al,\nu)}_{r,s}(n) = \sum_{k=0}^{N-r} A^{(\nu,\la)}_k(n)_{r,r+k}\,
c^{(\al,\la)}_{r+k,s}(n-k)
\]
and using the explicit expression for the coefficients from Theorem \ref{thm:MVinHermite} and its
proof, we see that we need to show that, after canceling common factors
\begin{gather*}
\rFs{3}{2}{1-s, r-N, n+1+ \frac{c^{(\nu)}}{d^{(\nu)}}}
{1-N,\, 1+ \frac{c^{(\nu)}}{d^{(\nu)}}}{1}
\prod_{p=0}^{n-1} \frac{(1+\frac{c^{(\nu+p)}}{d^{(\nu+p)}})}{(N-r+1+\frac{c^{(\nu+p)}}{d^{(\nu+p)}})} = \\
\sum_{k=0}^{N-r} A^{(\nu,\la)}_k(n)_{r,r+k}\, 2^k
\prod_{p=0}^{n-k-1} \frac{(1+\frac{c^{(\la+p)}}{d^{(\la+p)}})}{(N-r+1+\frac{c^{(\la+p)}}{d^{(\la+p)}})}
\rFs{3}{2}{1-s, r-N+k, n-k+1+ \frac{c^{(\la)}}{d^{(\la)}}}
{1-N,\, 1+ \frac{c^{(\la)}}{d^{(\la)}}}{1}.
\end{gather*}
Comparing with \eqref{eq:explconnection} gives the result.
\end{proof}


\subsection{The three-term recurrence relation}
We assume the condition of Proposition \ref{prop:shift}
to be valid.
The monic matrix valued
Hermite type orthogonal polynomials satisfy a three-term recurrence relation of the form, cf.
Section \ref{ssec:MVOPS},
\begin{equation}
\label{eq:recurrence_Hermite_general}
xP^{(\al,\nu)}_{n}(x)=P_{n+1}^{(\al,\nu)}(x)+B^{(\al,\nu)}_{n}P_{n}^{(\al,\nu)}(x)+C_{n}^{(\al,\nu)}P_{n-1}^{(\al,\nu)}(x)
\end{equation}
In order to calculate the coefficients $B^{(\al,\nu)}_{n}$ and $C^{(\al,\nu)}_{n}$, we use the fact that
\begin{equation}\label{eq:Bnin1blc}
B_{n}^{(\al,\nu)} = X_{n}^{(\al,\nu)}-X^{(\al,\nu)}_{n+1},\qquad C_{n}^{(\al,\nu)} = H_{n}^{(\al,\nu)}(H_{n-1}^{(\al,\nu)})^{-1},
\end{equation}
where  $X_{n}^{(\al,\nu)}$ is the one-but-leading coefficient of $P_{n}^{(\al,\nu)}$, i.e. $P_{n}^{(\al,\nu)}=x^nI+x^{n-1}X_{n}^{(\al, \nu)}+\cdots$.
If we differentiate $P_{n}^{(\al,\nu)}$ with respect to $x$ and we use Proposition \ref{prop:shift} (ii), we find that
$(n-1)X_{n}^{(\al,\nu)}=nX^{(\al,\nu+1)}_{n+1}$ which gives $X_{n}^{(\al,\nu)}=nX_{1}^{(\al,\nu+n-1)}$, so that it is sufficient to check the polynomials of degree one. Using the Rodrigues formula
of Proposition \ref{prop:shift}(iii) we obtain $P_{1}^{(\al,\nu)}(x)=G_{1}^{(\al,\nu)}(\Psi^{(\al,\nu)}(x))^\ast$. Therefore, we have that
\begin{equation}\label{eq:1blcasPsi}
X_{n}^{(\al,\nu)} =  n\,(K^{(\al,\nu+n-1)} )^{-1}
\bigl( \Psi^{(\al,\nu+n-1)}(0)\bigr)^\ast.
\end{equation}

\begin{prop}\label{prop:3termrecur}
Assume the conditions of Proposition \ref{prop:shift}.  Using the notation as in Proposition \ref{prop:shift}, the coefficients of the three-term recurrence relation
\eqref{eq:recurrence_Hermite_general} for the monic Hermite-type polynomials are given by
\begin{align*}
B_{n}^{(\al,\nu)}&= \frac12 A^\al  +
\frac14
\left( N+1-J+ \frac{c^{(\nu+n-1)}}{d^{(\nu+n-1)}}\right)^{-1}
\left( N+1-J+ \frac{c^{(\nu+n)}}{d^{(\nu+n)}}\right)^{-1}
\\
\times &
\left( N+1-J +\Bigl(
(n+1)\frac{c^{(\nu+n-1)}}{d^{(\nu+n-1)}}
- n\frac{c^{(\nu+n)}}{d^{(\nu+n)}}\Bigr) \right)  J(N-J) S^\al A^\ast (S^\al)^{-1}\\
\bigl(C_{n}^{(\al,\nu)}\bigr)_{p,q}&=
\de_{p,q} \frac{-2n}{d^{(\nu+n-1)}}
\frac{\de_1^{(\nu+n)}}{\de_1^{(\nu+n-1)}}
\frac{(-N- \frac{c^{(\nu+n)}}{d^{(\nu+n)}})_{p-1}}
{(-N- \frac{c^{(\nu+n-1)}}{d^{(\nu+n-1)}})_{p}}.
\end{align*}
\end{prop}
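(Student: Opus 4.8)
The plan is to express everything in terms of data already at our disposal. By \eqref{eq:Bnin1blc} together with $X_n^{(\al,\nu)} = n\,X_1^{(\al,\nu+n-1)}$ we have $B_n^{(\al,\nu)} = n\,X_1^{(\al,\nu+n-1)} - (n+1)\,X_1^{(\al,\nu+n)}$ and $C_n^{(\al,\nu)} = H_n^{(\al,\nu)}(H_{n-1}^{(\al,\nu)})^{-1}$, so the two ingredients to produce are: the one-but-leading coefficient $X_1^{(\al,\nu)} = (K^{(\al,\nu)})^{-1}(\Psi^{(\al,\nu)}(0))^\ast$ from \eqref{eq:1blcasPsi}, for which I need the constant term $\Psi^{(\al,\nu)}(0)$ explicitly; and the squared norms $H_n^{(\al,\nu)}$, for which I would feed the explicit $0$-th moment of Corollary \ref{cor:lem:explWalnumn} into Proposition \ref{prop:shift}(iv).

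First I would make $\Psi^{(\al,\nu)}(0)$ explicit. Evaluating \eqref{eq:LinvPsistarL} at $x=0$ gives $(L^\al(0))^{-1}(\Psi^{(\al,\nu)}(0))^\ast L^\al(0) = A^\al(d^{(\nu)}J+c^{(\nu)}) + \De^{(\nu+1)}(A^\al)^\ast(\De^{(\nu)})^{-1}$, and I would conjugate back by $L^\al(0)$ using that $A^\al$ commutes with $L^\al(0)$ (Proposition \ref{prop:LisexpandLinv}, Corollary \ref{cor:prop:LisexpandLinv}), that $[J,A^\al]=A^\al$, and that $L^\al(0)J(L^\al(0))^{-1} = J+\tfrac12(A^\al)^2$ (Lemma \ref{lem:Lm1JL}). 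The first summand produces a superdiagonal contribution and the second, rewritten by means of \eqref{eq:condition_delta_Phi} and \eqref{eq:recursion-alphas}, a subdiagonal one; the point is that the nonlinear conditions \eqref{eq:condition_delta_Phi}, \eqref{eq:recursion-alphas} are exactly what makes the main diagonal — and everything outside the tridiagonal band — cancel, so that $\Psi^{(\al,\nu)}(0)$ is tridiagonal with vanishing diagonal, as announced in the discussion following \eqref{eq:lcPsialnu}. This cancellation is the technical heart of the proof and the step where a careless computation will drop terms.

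With $\Psi^{(\al,\nu)}(0)$ known and $K^{(\al,\nu)} = \mathrm{lc}(\Psi^{(\al,\nu)})^\ast = -2d^{(\nu)}\bigl(N+1-J+\tfrac{c^{(\nu)}}{d^{(\nu)}}\bigr)$ read off from \eqref{eq:lcPsialnu}, each $X_1^{(\al,\nu)}$ is tridiagonal with zero diagonal and a diagonal invertible prefactor. I would then form $B_n^{(\al,\nu)} = n\,X_1^{(\al,\nu+n-1)} - (n+1)\,X_1^{(\al,\nu+n)}$ and verify that the subdiagonal parts combine — independently of $n$ — into $\tfrac12 A^\al$, while the superdiagonal parts, after using \eqref{eq:condition_delta_Phi} to express $c^{(\nu+n)}, d^{(\nu+n)}$ in terms of $c^{(\nu+n-1)}, d^{(\nu+n-1)}$ and \eqref{eq:recursion-alphas} to identify the ratios $\al_k^2/\al_{k-1}^2$, assemble into the stated correction term, in which the factor $J(N-J)$ appears precisely through those ratios. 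Keeping the $n$-dependence straight in this collapse is the second delicate point.

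Finally, for $C_n^{(\al,\nu)}$ I would use Proposition \ref{prop:shift}(iv) to write $C_n^{(\al,\nu)} = H_n^{(\al,\nu)}(H_{n-1}^{(\al,\nu)})^{-1} = -n\,(K^{(\al,\nu+n-1)})^{-1}\,H_0^{(\al,\nu+n)}\,(H_0^{(\al,\nu+n-1)})^{-1}$, the telescoping of the two chains of $K$'s being legitimate since all factors are diagonal and hence commute. Substituting the explicit $0$-th moments from Corollary \ref{cor:lem:explWalnumn} and the explicit $K^{(\al,\nu+n-1)}$, the factors $\al_p^2$, $2^{p-1}\sqrt{\pi}$ and $(p-1)!\,(1-N)_{p-1}$ cancel and $(K^{(\al,\nu+n-1)})^{-1}$ converts one of the shifted Pochhammers into $(-N-\tfrac{c^{(\nu+n-1)}}{d^{(\nu+n-1)}})_p$ in the denominator, which is the claimed formula. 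This last part is routine once the previous ingredients are in place.
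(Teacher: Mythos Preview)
Your overall plan is correct and matches the paper's structure: both use \eqref{eq:Bnin1blc}, \eqref{eq:1blcasPsi} for $B_n$ and Proposition \ref{prop:shift}(iv) with Corollary \ref{cor:lem:explWalnumn} for $C_n$, and in both cases the computation of $C_n^{(\al,\nu)}$ is exactly as you describe.

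The genuine difference is in how you propose to obtain $\Psi^{(\al,\nu)}(0)$. You plan to conjugate \eqref{eq:LinvPsistarL} at $x=0$ back through $L^\al(0)$ and force the cancellations directly using \eqref{eq:condition_delta_Phi}, \eqref{eq:recursion-alphas}; this works, but note that after conjugation the first summand contributes not only on the first off-diagonal but also a $\tfrac12 d^{(\nu)}(A^\al)^3$ term, and the second summand, once conjugated by the nontrivial lower-triangular $L^\al(0)$, spreads over several bands as well --- so the cancellation you flag as ``the technical heart'' really is a multi-band computation, not just a diagonal one. The paper bypasses this entirely with a structural shortcut: since $B_0^{(\al,\nu)}H_0^{(\al,\nu)}$ is self-adjoint (a general fact from Section \ref{ssec:MVOPS}) and $H_0^{(\al,\nu)}$, $\mathrm{lc}(\Psi^{(\al,\nu)})$ are diagonal, one obtains \eqref{eq:Psi0starandPsi0}, which says $\Psi^{(\al,\nu)}(0)$ is conjugate to its own adjoint by a diagonal matrix. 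Combined with the Hessenberg shape already visible in \eqref{eq:lcPsialnu}, this forces tridiagonality without computing a single higher band. The paper then reads the subdiagonal directly from \eqref{eq:lcPsialnu} and recovers the superdiagonal from \eqref{eq:Psi0starandPsi0}, arriving at \eqref{eq:Psi0explicit}. Your route is more elementary but labour-intensive; the paper's route trades that computation for one line of symmetry reasoning.
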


Note that $C_{n}^{(\al,\nu)}$ is diagonal and positive definite, and
that $B_{n}^{(\al,\nu)}$ only has a non-zero sub- and superdiagonal.

\begin{proof}
We have
$C_{n}^{(\al,\nu)} = H_{n}^{(\al,\nu)}(H_{n-1}^{(\al,\nu)})^{-1}$, cf. \eqref{eq:Hn_CnHnm1},
and plugging in the expression of
Proposition \ref{prop:shift}(iv), taking into account
that all terms commute, the result follows from the explicit
expressions for $K^{(\al,\nu)}$ and $H^{(\al,\nu)}_0$ in
\eqref{eq:lcPsialnu} and Corollary \ref{cor:lem:explWalnumn}.

We first consider $B_0^{(\al,\nu)}$. From the three-term recurrence
we obtain an expression for $P^{(\al,\nu)}_1$, which we compare
with Proposition \ref{prop:shift}(iii), giving
$B^{(\al,\nu)}_0 = - \text{lc}(\Psi^{(\al,\nu})^{-1} \Psi^{(\al,\nu)}(0)^\ast$,
since the leading coefficient is self-adjoint.
By construction $B^{(\al,\nu)}_0H^{(\al,\nu)}_0$ is self-adjoint, see Section \ref{ssec:MVOPS}.
Using the explicit expression, we find
\begin{equation}\label{eq:Psi0starandPsi0}
\Psi^{(\al,\nu)}(0) = \left( \text{lc}(\Psi^{(\al,\nu}))^{-1}
H^{(\al,\nu)}_0 \right)^{-1} \Psi^{(\al,\nu)}(0)^\ast\,
\text{lc}(\Psi^{(\al,\nu}))^{-1}
H^{(\al,\nu)}_0.
\end{equation}
Since the conjugation in \eqref{eq:Psi0starandPsi0} is by a diagonal matrix, and $\Psi^{(\al,\nu)}(0)$
is a Hessenberg matrix, we have that $\Psi^{(\al,\nu)}(0)$
is tridiagonal. The diagonal being zero is already clear from
\eqref{eq:lcPsialnu}. This shows that from
\eqref{eq:Bnin1blc} and \eqref{eq:1blcasPsi} we have $B^{(\al,\nu)}$
as a tridiagonal matrix with zero diagonal.

From \eqref{eq:lcPsialnu} we have
\[
 \Psi^{(\al,\nu)}(0)_{i,i-1}
= 2\frac{\al_i}{\al_{i-1}}\frac{\de^{(\nu+1)}_{i-1}}{\de^{(\nu)}_{i}}
= \frac{\al_{i-1}}{\al_i} d^{(\nu)} (i-1)(N-i+1),
\]
where the second equality follows from \eqref{eq:recursion-alphas},
and using \eqref{eq:Psi0starandPsi0}, we can obtain
\[
\Psi^{(\al,\nu)}(0)_{i-1,i}
= \frac{H^{(\al,\nu)}_{i,i}}{H^{(\al,\nu)}_{i-1,i-1}}
\frac{\text{lc}\Psi^{(\al,\nu)}_{i,i}}{\text{lc}\Psi^{(\al,\nu)}_{i-1,i-1}}
2\frac{\al_i}{\al_{i-1}}\frac{\de^{(\nu+1)}_{i-1}}{\de^{(\nu)}_{i}}
= 2\frac{\al_i}{\al_{i-1}}\bigl(c^{(\nu)}+ d^{(\nu)}(N+1-i)\bigr)
\]
by Corollary \ref{cor:lem:explWalnumn}, \eqref{eq:lcPsialnu}
and again \eqref{eq:recursion-alphas}. Combining gives
\begin{equation}\label{eq:Psi0explicit}
\Psi^{(\al,\nu)}(0) = (A^\al)^\ast
\bigl(c^{(\nu)}+ d^{(\nu)}(N+1-J)\bigr)
+
\frac12 d^{(\nu)} (S^\al)^{-1} A S^\al J(N-J).
\end{equation}
Using this expression in \eqref{eq:Bnin1blc} using
\eqref{eq:1blcasPsi} gives the result by a straightforward calculation.
\end{proof}


\subsection{Irreducibility of the weight}
We assume that the conditions of
Proposition \ref{prop:shift} hold, so that
the matrix valued orthogonal polynomials
exist, and that all the required properties
are valid.

For the general weight $W^{(\al,\nu)}$, let $T\in\cA$, then by \cite[Lemma~3.1]{KR15}, we
have $[T,C^{(\al,\nu)}_n]=0$ for all $n\in \N$, see
\eqref{eq:commutationBnHn} and Section \ref{ssec:MVOPS}.
Recall from Proposition \ref{prop:3termrecur} that $C^{(\al,\nu)}_n$ is diagonal.
First observe that, assuming
without loss of generality that $p\geq r$,
we have
\[
(C^{(\al,\nu)}_n)_{p,p} = (C^{(\al,\nu)}_n)_{r,r}
\quad \Longleftrightarrow \quad
(-N+r-1-\frac{c^{(\nu+n)}}{d^{(\nu+n)}})_{p-r} =
(-N+r-\frac{c^{(\nu+n-1)}}{d^{(\nu+n-1)}})_{p-r}
\]
and since all the terms in the shifted factorials are negative for all possible choices, this
implies $1+\frac{c^{(\nu+n)}}{d^{(\nu+n)}}=
\frac{c^{(\nu+n-1)}}{d^{(\nu+n-1)}}$ or $p=r$.
If $T$ has a non-zero off-diagonal entry, say $T_{p,r}\not=0$, $p\not=r$, we need
$(C^{(\al,\nu)}_n)_{p,p} = (C^{(\al,\nu)}_n)_{r,r}$
for all $n\in \N$, hence
$1+\frac{c^{(\nu+n)}}{d^{(\nu+n)}}=
\frac{c^{(\nu+n-1)}}{d^{(\nu+n-1)}}$ for all $n\in \N$. Or $\frac{c^{(\nu+n)}}{d^{(\nu+n)}}
= -n + \frac{c^{(\nu)}}{d^{(\nu)}}$ for all $n\in \N$, contradicting the positivity assumption on
$c^{(\nu+n)}$ and $d^{(\nu+n)}$ for all $n\in \N$
as in Proposition \ref{prop:shift}.

\begin{prop}\label{prop:irreducibility}
Consider the algebra $\cA$ for the weight
$W^{(\al, \nu)}$ under the assumptions as
in Proposition \ref{prop:shift}.
Then $\cA$ is trivial, and the weight is
irreducible.
\end{prop}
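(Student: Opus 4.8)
The plan is to build directly on the paragraph preceding the proposition. There it is shown, using $[T,C^{(\al,\nu)}_n]=0$ for $T\in\cA$ (a consequence of \cite[Lemma~3.1]{KR15}) together with the explicit diagonal form of $C^{(\al,\nu)}_n$ from Proposition \ref{prop:3termrecur}, that a nonzero off-diagonal entry $T_{p,r}\neq 0$ with $p\neq r$ would force $(C^{(\al,\nu)}_n)_{p,p}=(C^{(\al,\nu)}_n)_{r,r}$ for all $n$, which the positivity of $c^{(\nu+n)}$ and $d^{(\nu+n)}$ forbids. Hence every $T\in\cA$ is a diagonal matrix, and the only thing left to prove is that a diagonal element of $\cA$ must be a real multiple of the identity; the irreducibility statement will then follow from the discussion in Section \ref{ssec:MVOPS}.

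For the diagonal case, write $T=\diag(t_1,\dots,t_N)\in\cA$. The defining relation $TW^{(\al,\nu)}(x)=W^{(\al,\nu)}(x)T^\ast$ reads entrywise as $t_p\,(W^{(\al,\nu)}(x))_{p,q}=\overline{t_q}\,(W^{(\al,\nu)}(x))_{p,q}$ for all $p,q$ and all $x$. The key observation is that every entry $(W^{(\al,\nu)})_{p,q}$ is a nonzero function: by Lemma \ref{lem:explWalnumn} it is a linear combination of the Hermite polynomials $H_{p+q-2t}$, $1\le t\le p\wedge q$, times $e^{-x^2}$, and the term of highest degree ($t=1$) has coefficient $\al_p\al_q\,\de^{(\nu)}_1/((p-1)!\,(q-1)!)$, which is nonzero since $\al_p,\al_q\neq 0$ and $\de^{(\nu)}_1>0$. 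Consequently $t_p=\overline{t_q}$ for all $p,q$; taking $p=q$ shows each $t_p$ is real, and then $t_p=t_q$ for all $p,q$, so $T=t_1 I$ with $t_1\in\R$.

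This proves $\cA=\R I$, the trivial case. Since any weight that is reducible to weights of smaller size has nontrivial $\cA$ (Section \ref{ssec:MVOPS}), the weight $W^{(\al,\nu)}$ is irreducible. There is essentially no serious obstacle here: the argument of the preceding paragraph already handles the off-diagonal part, and the diagonal part reduces at once to the non-vanishing of the entries of $W^{(\al,\nu)}$, which is immediate from the closed formula of Lemma \ref{lem:explWalnumn}.
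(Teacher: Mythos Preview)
Your proof is correct, and for the off-diagonal step it coincides with the paper's argument (the paragraph preceding the proposition). For the diagonal step, however, you take a genuinely different route. The paper stays with \cite[Lemma~3.1]{KR15} and uses $[T,B^{(\al,\nu)}_n]=0$: from the explicit form of $B^{(\al,\nu)}_n$ in Proposition \ref{prop:3termrecur} one sees that $B^{(\al,\nu)}_n-B^{(\al,\nu)}_{n-1}$ is a product of diagonal matrices with $A^\ast$, and since $T$ is already diagonal this forces $[T,A^\ast]=0$, hence $t_{j-1}=t_j$ for all $j$. You instead go back to the defining relation $TW^{(\al,\nu)}=W^{(\al,\nu)}T^\ast$ and read off $t_p=\overline{t_q}$ from the non-vanishing of each entry $(W^{(\al,\nu)})_{p,q}$, which is immediate from the leading Hermite term in Lemma \ref{lem:explWalnumn}. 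Your approach is more direct and avoids the full computation of $B^{(\al,\nu)}_n$; the paper's approach has the virtue of using only the three-term recurrence data, in keeping with how the off-diagonal part was settled, and would still work in situations where an explicit closed form for the weight entries is not available.
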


\begin{proof}
The discussion before Proposition \ref{prop:irreducibility} shows that any $T\in\cA$
has to be diagonal.
By \cite[Lemma~3.1]{KR15} $[T, B^{(\al,\nu)}_n]=0$,
for all $n\in \N$, so that in particular
$[T, B^{(\al,\nu)}_n-B^{(\al,\nu)}_{n-1}]=0$.
By Proposition \ref{prop:3termrecur}
we see that $B^{(\al,\nu)}_n-B^{(\al,\nu)}_{n-1}$
consists of a product of diagonal matrices and
$A^\ast$. Since we have $T$ as a diagonal matrix,
the commutation shows that $[T,A^\ast]=0$.
This means that all diagonal elements of $T$
are equal, and the result follows.
\end{proof}


\subsection{Explicit matrix valued Hermite polynomials}\label{ssec:explexample1}

In general we cannot find all solutions to
\eqref{eq:condition_delta_Phi} and \eqref{eq:recursion-alphas}, since the relations are non-linear.
Starting with \eqref{eq:recursion-alphas}
we assume now that the $\nu$-independent left hand side
coincides with $(N-k)/2$ in the left hand side.
So we take
\begin{equation}\label{eq:alpha-example1}
\al_k=\sqrt{2^{1-k} (N-k+1)_{k-1}}, \qquad k=1,\ldots,N.
\end{equation}
Then \eqref{eq:condition_delta_Phi} and \eqref{eq:recursion-alphas} give the following recurrence relations for the coefficients $\de_k^{(\nu)}$:
\begin{equation}
\label{eq:rec_delta_example1}
\de_k^{(\nu+1)}=(d^{(\nu)}k+c^{(\nu)})\de_k^{(\nu)},\qquad \de_{k+1}^{(\nu)} = \left(\frac{d^{(\nu)}k+c^{(\nu)}}{d^{(\nu)}k}\right) \de_{k}^{(\nu)}.
\end{equation}
A possible solution to \eqref{eq:rec_delta_example1} is given by
$$
\de_k^{(\nu)} =\frac{(\nu+1)_{k-1}}{(k-1)!},\qquad c^{(\nu)} = \frac{\nu}{\nu+1}, \qquad d^{(\nu)}=\frac{1}{\nu+1}
$$
and assuming $\nu>0$ we have that $\De^{(\nu)}$ is positive
definite and $c^{(\nu)}$ and $d^{(\nu)}$ positive, so that
all conditions, and in particular \eqref{eq:condition_delta_Phi} and \eqref{eq:recursion-alphas} are satisfied.

The general results for certain coefficients, such as the expressions in Proposition \ref{prop:3termrecur}
and Theorem \ref{thm:MVinHermite}, will simplify slightly using the explicit expressions
since $c^{(\nu)}/d^{(\nu)}=\nu$.


\subsection{Matrix valued Hermite polynomials related to $\mathfrak{sl}(2)$}
\label{ssec:explexample2}
Take $\al_{k+1}^2/\al_k^2=k(N-k)/4$, then
 $\al_k=2^{1-k} \sqrt{ (k-1)! (N-k+1)_{k-1}}$, $k=1,\ldots,N$. Then \eqref{eq:condition_delta_Phi} and \eqref{eq:recursion-alphas} give the following recurrence relations for the coefficients $\de_k^{(\nu)}$:
\begin{equation}
\label{eq:rec_delta_example2}
\de_k^{(\nu+1)}=(d^{(\nu)}k+c^{(\nu)})\de_k^{(\nu)},\qquad \de_{k+1}^{(\nu)} = \frac12 \left(k+\frac{c^{(\nu)}}{d^{(\nu)}}\right) \de_{k}^{(\nu)}.
\end{equation}
Assume $c^{(\nu)}/d^{(\nu)}=\nu$, then the solution to first equation is
easily obtained. Putting, $c^{(\nu)}=\la \nu$, $d^{(\nu)} =\la$ we
find a solution to \eqref{eq:rec_delta_example2} by
$$
\de_k^{(\nu)} =2^{-k}\, \la^{\nu}\, \Ga(\nu+k)=
2^{-k}\, (\nu)_k \, \la^{\nu}\, \Ga(\nu)
,\qquad c^{(\nu)} = \nu\la, \qquad d^{(\nu)}=\la
$$
for some fixed $\la>0$. The positivity requirements require $\nu>0$.
Then \eqref{eq:condition_delta_Phi} and \eqref{eq:recursion-alphas} are satisfied.

Since now
$c^{(\nu)}/d^{(\nu)}=\nu$ as well, most simplifications of
Section \ref{ssec:explexample1} will hold in this setting as well.

Up to relabeling the case $\nu_i = \sqrt{2i}$ of Dur\'an \cite{Dura-CA} fits
in this family with a suitable choice.

\begin{rmk} Let $E,F,H$ with relations $[H,E]=2E$, $[H,F]=-2F$ and
$[E,F]=H$ give the Lie algebra $\mathfrak{sl}(2,\C)$. Suppose that
$\C^N$ has orthonormal basis $\{e_1,\cdots, e_N\}$, then the finite-dimensional representation $\pi\colon \mathfrak{sl}(2) \to \text{End}(\C^N)$ is given
by
\begin{gather*}
\pi(E) e_k = \sqrt{(k-1)(N+1-k)} e_{k-1}, \quad
\pi(F) e_k = \sqrt{k(N-k)} e_{k+1}, \\
\pi(H) e_k = (N+1-2k) e_k.
\end{gather*}
This is an irreducible finite-dimensional unitary resentation for the $\ast$-structure
$E^\ast=F$, $H^\ast=H$ corresponding to the real form $\mathfrak{su}(2)$
of $\mathfrak{sl}(2,\C)$. Then $A^\al = \pi(F)$, $(A^\al)^\ast = \pi(E)$,
and $J= \frac12(N+1 -\pi(H))$, so that $e^{xA^\al}= \pi(\exp(xF))$ where
we take the corresponding representation of $SU(2)$ and we consider
$\exp(xF)\in SU(2)$. Some identities used, e.g. \eqref{eq:conjAHofJ}
and Lemma \ref{lem:Lm1JL},
can now be obtained directly from the Lie group interpretation. However, it is
not clear if this relation can be extended further.
\end{rmk}


\subsection{Another example of explicit matrix valued Hermite polynomials}
\label{ssec:explexample3}
We now take $\al_k=1$ to be constant, so that we need to solve
\begin{equation*}
1=\frac{d^{(\nu)}k(N-k)}{2(d^{(\nu)}k+c^{(\nu)})} \frac{\de^{(\nu)}_{k+1}}{\de^{(\nu)}_{k}}, \qquad \de^{(\nu+1)}_{k} = (d^{(\nu)}k+c^{(\nu)}) \de^{(\nu)}_{k+1}
\end{equation*}
for which
\begin{equation}
d^{(\nu)}=\rho, \qquad c^{(\nu)} = C+ \nu\rho,
\qquad \de^{(\nu)}_{k} = \frac{2^{k-1} (1+\nu + C/\rho)_{k-1}}{(k-1)!\, (N-k+1)_{k-1}} \rho^\nu \,\Ga(1+\nu +C/\rho)
\end{equation}
with $\rho>0$, $\nu>0$ and $C\geq 0$ gives a solution meeting all the conditions.

Since now
$c^{(\nu)}/d^{(\nu)}=\nu +C\rho^{-1}$ the simplifications of
will be somewhat more general.


\section{A Burchnall-type formula for matrix valued orthogonal polynomials}
\label{sec:Burchnall_formula}

The matrix valued Hermite polynomials of Section
\ref{sec:MV_Hermite} share many properties
with the matrix valued Gegenbauer type
polynomials previously introduced in
\cite{Koe:Rio:Rom}. In particular, the
shift properties and the Rodrigues expression
hold in both cases, see Proposition \ref{prop:shift} and \cite[Thm.~3.1]{Koe:Rio:Rom}.
This is the essential ingredient to
push the Burchnall-type formulas of
\cite{IKR} to the matrix level. In contrast with \cite{IKR}, we only
need to deal with $\frac{d}{dx}$ as lowering operator, since
these are the only examples available now.
The proofs of the statements in Section \ref{sec:Burchnall_formula} are analogous to the ones
in the scalar case as given in in \cite[\S 2]{IKR}.

Let us assume we have a matrix weight $W^{(\nu)}$ for some parameter
space $\nu\in \cV$
supported in the interval $[a,b]$, where $a$ and $b$ are allowed to be infinite.
We assume that with $\nu\in \cV$ we also
have $\nu+1\in \cV$.
We assume the existence of matrix valued polynomials
$\Phi^{(\nu)}$ and $\Psi^{(\nu)}$ of degree (at most) two and one respectively, such that
we have the Pearson equations
\begin{equation}
\label{eq:Pearson_formal}
W^{(\nu+1)}(x)=W^{(\nu)}(x)\Phi^{(\nu)}(x),\qquad \frac{dW^{(\nu+1)}}{dx}(x)=W^{(\nu)}(x)\Psi^{(\nu)}(x),
\end{equation}
for all $x\in[a,b]$.
Moreover, we assume that there exist shift operators $\frac{d}{dx}$ and $S^{(\nu)}=(\frac{d}{dx})(\Phi^{(\nu)}(x))^\ast +(\Psi^{(\nu)}(x))^\ast ,$ such that
\begin{equation}
\label{eq:shift_formal}
\frac{dP^{(\nu)}_n}{dx}(x)=n\, P_{n-1}^{(\nu+1)}(x), \qquad (P^{(\nu+1)}_{n-1}S^{(\nu)})(x)=K^{(\nu)}_nP_{n}^{(\nu)}(x),
\end{equation}
where $K_n^{(\nu)}$ is a constant invertible matrix and
$\{P^{(\nu)}_n\}_{n\geq0}$ is the sequence of monic orthogonal polynomials with respect to $W^{(\nu)}$.
Note that
\begin{equation}
\label{eq:rasing_on_Q}
(QS^{(\nu)})(x)=\left[ \frac{d}{dx} \left( Q(x)W^{(\nu+1)}(x) \right)\right] \left(W^{(\nu)}(x)\right)^{-1}.
\end{equation}
Moreover we assume that, for every $n\in \N$, the polynomial $P_n^{(\nu)}$ is given by the Rodrigues formula
obtained by iterating \eqref{eq:shift_formal};
\begin{equation}
\label{eq:Rodrigues_formal}
P^{(\nu)}_n(x)=G^{(\nu)}_n \left( \frac{d^nW^{(\nu+n)}}{dx^n}(x)\right) \left(W^{(\nu)}(x)\right)^{-1}
= (\cdots ((\textbf{1}S^{(\nu+n-1)})S^{(\nu+n-2)})\cdots S^{(\nu)})(x),
\end{equation}
where $\textbf{1}$ is the constant function, $G^{(\nu)}_n
= (K_n^{(\nu)})^{-1}\cdots (K_1^{(\nu+n-1)})^{-1}$ is a constant matrix. Note that this in particular
requires the weight to decay sufficiently fast at the endpoints.

The typical example are the Hermite-type polynomials introduced Section \ref{sec:MV_Hermite}, see in
particular Propositions \ref{prop:Pearson1}, \ref{prop:Pearson2},
\ref{prop:shift},
and the Gegenbauer-type polynomials given in \cite{Koe:Rio:Rom}, where the matrices $K_n^{(\nu)}$ and $G^{(\nu)}_n$ and
the polynomials $\Phi^{(\nu)}$ and $\Psi^{(\nu)}$ are
calculated explicitly
see \cite[Thm.~3.1, (4.9), (4.10)]{Koe:Rio:Rom}.

Now we can obtain the matrix valued analogue of
\cite[Thm.~2.1]{IKR}.

\begin{thm}[Burchnall's formula]
\label{thm:Burchnall_general}
Assume the general conditions of Section \ref{sec:Burchnall_formula}.
Let $Q$ be a $C^\infty$ matrix valued function. Then we have
\begin{gather*}
\label{eq:Matrix_Burchnall}
\Bigl(QS^{(\nu+n-1)} \cdots S^{(\nu+1)} S^{(\nu)}\Bigr)(x) = \\
\sum_{k=0}^n \binom{n}{k} Q^{(k)}(x)  \bigl( G_{n-k}^{(\nu+k)}\bigr)^{-1}
P_{n-k}^{(\nu+k)}(x)  \left(\Phi^{(\nu)}(x) \cdots \Phi^{(\nu+k-1)}(x)\right)^\ast .
\end{gather*}
\end{thm}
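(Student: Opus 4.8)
The plan is to prove the formula by induction on $n$, exactly paralleling the scalar argument in \cite[\S 2]{IKR}. The base case $n=0$ is trivial: the left-hand side is $Q(x)$ and the right-hand side reduces to the single term $k=0$, which is $Q(x)(G_0^{(\nu)})^{-1}P_0^{(\nu)}(x) = Q(x)$ since $G_0^{(\nu)} = I$ and $P_0^{(\nu)} = I$ and the empty product of $\Phi$'s is $I$.

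For the inductive step, I would apply $S^{(\nu+n)}$ from the right to the formula for $n$ (with the parameter shifted $\nu \mapsto \nu+1$ throughout, so that $QS^{(\nu+n)}\cdots S^{(\nu+1)}$ is the induction hypothesis applied to the weight family starting at $\nu+1$), and I would use the representation \eqref{eq:rasing_on_Q} of the shift operator $S^{(\nu)}$. First I would write
\[
\Bigl(QS^{(\nu+n)} \cdots S^{(\nu)}\Bigr)(x) = \Bigl(\widetilde{Q}S^{(\nu)}\Bigr)(x), \qquad \widetilde{Q} = QS^{(\nu+n)}\cdots S^{(\nu+1)},
\]
and then expand $\widetilde{Q}$ by the induction hypothesis as a sum over $k$ from $0$ to $n$ of terms $\binom{n}{k} Q^{(k)} (G_{n-k}^{(\nu+1+k)})^{-1} P_{n-k}^{(\nu+1+k)} (\Phi^{(\nu+1)} \cdots \Phi^{(\nu+k)})^\ast$. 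Applying $S^{(\nu)}$ via \eqref{eq:rasing_on_Q} means multiplying by $W^{(\nu+1)}$ on the right, differentiating, and multiplying by $(W^{(\nu)})^{-1}$; using the Pearson equation $W^{(\nu+1)} = W^{(\nu)}\Phi^{(\nu)}$ and the identity $\bigl(W^{(\nu)}\Phi^{(\nu)}\bigr)' = W^{(\nu)}\Psi^{(\nu)}$ this turns into a clean Leibniz-type computation. The key structural facts I would invoke are: (a) the raising property $(P^{(\nu+1+k)}_{n-k}S^{(\nu+k)})(x) = K^{(\nu+k)}_{n-k+1}P^{(\nu+k)}_{n-k+1}(x)$ from \eqref{eq:shift_formal}, which raises the degree and shifts $\nu$ down by one, producing the $\Phi^{(\nu+k)}$ factor on the right; (b) the lowering property $\frac{d}{dx}(Q^{(k)}\cdot\text{stuff}) $ producing $Q^{(k+1)}$; and (c) the bookkeeping $G^{(\nu+k)}_{n-k+1} = (K^{(\nu+k)}_{n-k+1})^{-1} G^{(\nu+1+k)}_{n-k}$ relating the normalizing matrices, so that the $K$'s cancel.

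After the differentiation, each term in the $k$-sum splits into two: one where the derivative hits $Q^{(k)}$, raising it to $Q^{(k+1)}$ and leaving the polynomial-degree part unchanged (this will contribute to the index-$(k+1)$ term of the final sum at level $n+1$), and one where the derivative hits the product $P_{n-k}^{(\nu+1+k)} \cdot (W^{(\nu+1)}/W^{(\nu)})$-type factor, which by (a) raises the polynomial to $P^{(\nu+k)}_{n-k+1}$ and appends $\Phi^{(\nu+k)}$ on the right — remembering that $S^{(\nu)}$ has $\Phi^{(\nu)}$ on the right via the adjoint, so the chain of $\Phi$'s accumulates in the right order $\Phi^{(\nu)}\Phi^{(\nu+1)}\cdots$ with $\Phi^{(\nu)}$ innermost. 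Reindexing this second family by $k \mapsto k-1$ and combining with the first family, the coefficients $\binom{n}{k} + \binom{n}{k-1} = \binom{n+1}{k}$ assemble via Pascal's rule into precisely the claimed formula at level $n+1$.

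The main obstacle is the careful ordering of the noncommutative factors: in the scalar case the $\Phi$'s and $\Psi$'s commute freely, but here the adjoints and the order in which the $\Phi^{(\nu+j)}$ accumulate on the right must be tracked meticulously, and one must verify that the "wrong" cross terms — e.g. a derivative landing on a $\Phi^\ast$ factor rather than on the polynomial — either vanish or recombine correctly. I expect that the cleanest route is to not expand $S^{(\nu)}$ through \eqref{eq:rasing_on_Q} termwise but rather to recognize that $\bigl(R\, P^{(\nu+1)}_m S^{(\nu)}\bigr)(x)$, for $R$ a matrix function, equals $R'(x)P^{(\nu+1)}_m(x)\Phi^{(\nu)}(x)^\ast + R(x)\bigl(P^{(\nu+1)}_m S^{(\nu)}\bigr)(x)$ — a product rule for $S^{(\nu)}$ acting on the right — which follows directly from \eqref{eq:rasing_on_Q} and the Pearson equations, and which isolates exactly the two contributions described above with $\Phi^{(\nu)}$ appearing on the right in the correct slot. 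With that lemma in hand the induction is bookkeeping, and the $K$-cancellation identity $G^{(\nu+k)}_{n-k+1} = (K^{(\nu+k)}_{n-k+1})^{-1}G^{(\nu+1+k)}_{n-k}$ is immediate from the definition of $G^{(\nu)}_n$ in \eqref{eq:Rodrigues_formal}.
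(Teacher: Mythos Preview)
Your inductive approach is correct in spirit and would go through, but the paper's proof is considerably more direct and sidesteps entirely the ``main obstacle'' you flag. The paper does not induct: it first observes that iterating \eqref{eq:rasing_on_Q} gives
\[
\Bigl(QS^{(\nu+n-1)}\cdots S^{(\nu)}\Bigr)(x)=\Bigl[\frac{d^n}{dx^n}\bigl(Q(x)W^{(\nu+n)}(x)\bigr)\Bigr]\bigl(W^{(\nu)}(x)\bigr)^{-1},
\]
then applies the ordinary Leibniz rule to produce $\sum_k \binom{n}{k}Q^{(k)}\,\frac{d^{n-k}W^{(\nu+n)}}{dx^{n-k}}\,(W^{(\nu)})^{-1}$, identifies each derivative of the weight via the Rodrigues formula \eqref{eq:Rodrigues_formal} as $(G_{n-k}^{(\nu+k)})^{-1}P_{n-k}^{(\nu+k)}W^{(\nu+k)}$, and finally rewrites $W^{(\nu+k)}(W^{(\nu)})^{-1}=(\Phi^{(\nu)}\cdots\Phi^{(\nu+k-1)})^\ast$ by iterating the adjoint of the first Pearson equation \eqref{eq:Pearson_formal}. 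No induction, no Pascal, no product-rule lemma for $S^{(\nu)}$, and crucially no cross terms from differentiating $\Phi^\ast$-factors.

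In your scheme those cross terms are a genuine nuisance: the product-rule lemma you state applies to expressions $R\,P_m^{(\nu+1)}$, but the terms in your induction hypothesis carry an extra factor $(\Phi^{(\nu+1)}\cdots\Phi^{(\nu+k)})^\ast$ on the right of the polynomial, so the lemma does not apply as written. The clean fix is to notice that this factor times $W^{(\nu+1)}$ collapses to $W^{(\nu+k+1)}$ (adjoint Pearson again), after which the Rodrigues formula absorbs everything and you are essentially back to the paper's computation at level $n$, differentiated once. So your route works, but only after importing the same collapse-and-Rodrigues step that the paper uses globally; doing it globally from the outset is what buys the paper its three-line proof.
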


\begin{proof}
Iterating \eqref{eq:rasing_on_Q} we find that the left hand side of \eqref{eq:Rodrigues_formal} gives
\begin{multline*}
\Bigl( \bigl( (QS^{(\nu+n-1)}) \cdots S^{(\nu+1)}\bigr)S^{(\nu)}\Bigr)(x) =
\left[ \frac{d^n}{dx^n} \left( Q(x)W^{(\nu+n)}(x) \right)\right] \left(W^{(\nu)}(x)\right)^{-1} \\
=
\sum_{k=0}^n \binom{n}{k} Q^{(k)}(x)  \frac{d^{n-k}W^{(\nu+n)}}{dx^{n-k}}(x) \left( W^{(\nu)}(x)\right)^{-1}\\
=\sum_{k=0}^n \binom{n}{k} Q^{(k)}(x)  \bigl( G_{n-k}^{(\nu+k)}\bigr)^{-1}
P_{n-k}^{(\nu+k)}(x) W^{(\nu+k)}(x) \left( W^{(\nu)}(x)\right)^{-1}
\end{multline*}
using the Rodrigues formula \eqref{eq:Rodrigues_formal} and the
Leibniz rule.
Now we iterate \eqref{eq:Pearson_formal}
and take its adjoint to find the result.
\end{proof}

If we replace $Q=
(\cdots ((\textbf{1}S^{(\nu+n-1)})S^{(\nu+n-2)})\cdots S^{(\nu)})(x) = (G^{(\nu+n)}_m)^{-1}
P_m^{(\nu+n)}$ in Theorem \ref{thm:Burchnall_general}, and we use the shift operators \eqref{eq:shift_formal}, we obtain the an extension of \cite[(5)]{Burc} and
\cite[Cor.~2.2]{IKR}.

\begin{cor}
\label{cor:thm:Burchnall_general}
Under the general conditions of Section
\ref{sec:Burchnall_formula} we have
\begin{gather*}
G^{(\nu+n)}_{m} (G^{(\nu)}_{n+m})^{-1} P_{m+n}^{(\nu)}(x)=  \\
\sum_{k=0}^{n\wedge m}
\binom{n}{k}\binom{m}{k} k! \, P_{m-k}^{(\nu+n+k)}(x)   \bigl( G_{n-k}^{(\nu+k)}\bigr)^{-1}
P_{n-k}^{(\nu+k)}(x)  \left(\Phi^{(\nu)}(x) \cdots \Phi^{(\nu+k-1)}(x)\right)^\ast
\end{gather*}
\end{cor}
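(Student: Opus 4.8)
The plan is to deduce the statement directly from Theorem \ref{thm:Burchnall_general} by taking for $Q$ a Rodrigues-type polynomial. Concretely, I would set
\[
Q(x) = \bigl( \cdots \bigl( (\textbf{1}\, S^{(\nu+n+m-1)})\, S^{(\nu+n+m-2)} \bigr) \cdots S^{(\nu+n)} \bigr)(x),
\]
the $m$-fold iterate of shift operators applied to the constant function $\textbf{1}$ starting at parameter level $\nu+n$, which by \eqref{eq:Rodrigues_formal} (with $\nu$ replaced by $\nu+n$ and $n$ by $m$) equals $(G_m^{(\nu+n)})^{-1} P_m^{(\nu+n)}(x)$. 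Since each $S^{(\mu)}$ acts from the right and a constant matrix pulls out to the left through it, applying the remaining operators $S^{(\nu+n-1)}, \dots, S^{(\nu)}$ just continues the iteration, so that the left-hand side of Burchnall's formula becomes
\[
\bigl( Q\, S^{(\nu+n-1)} \cdots S^{(\nu)} \bigr)(x) = \bigl( \cdots \bigl( (\textbf{1}\, S^{(\nu+n+m-1)}) \cdots \bigr) S^{(\nu)} \bigr)(x) = (G_{n+m}^{(\nu)})^{-1} P_{n+m}^{(\nu)}(x),
\]
again by \eqref{eq:Rodrigues_formal}, now at degree $n+m$. This identifies the left-hand side.

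For the right-hand side of Theorem \ref{thm:Burchnall_general} the only extra ingredient is the evaluation of the derivatives $Q^{(k)}$. Since $Q = (G_m^{(\nu+n)})^{-1} P_m^{(\nu+n)}$ has a constant prefactor, iterating the lowering relation $\frac{d}{dx} P_j^{(\mu)} = j\, P_{j-1}^{(\mu+1)}$ from \eqref{eq:shift_formal} gives
\[
Q^{(k)}(x) = (G_m^{(\nu+n)})^{-1}\, \frac{m!}{(m-k)!}\, P_{m-k}^{(\nu+n+k)}(x),
\]
which vanishes for $k > m$; hence the sum in Theorem \ref{thm:Burchnall_general} truncates at $k = n \wedge m$. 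Substituting this together with the expression for the left-hand side into Theorem \ref{thm:Burchnall_general}, and then multiplying both sides on the left by the constant invertible matrix $G_m^{(\nu+n)}$, I obtain
\[
G_m^{(\nu+n)} (G_{n+m}^{(\nu)})^{-1} P_{n+m}^{(\nu)}(x) = \sum_{k=0}^{n \wedge m} \binom{n}{k} \frac{m!}{(m-k)!}\, P_{m-k}^{(\nu+n+k)}(x)\, (G_{n-k}^{(\nu+k)})^{-1} P_{n-k}^{(\nu+k)}(x)\, \bigl( \Phi^{(\nu)}(x) \cdots \Phi^{(\nu+k-1)}(x) \bigr)^\ast .
\]
Rewriting $\binom{n}{k} \frac{m!}{(m-k)!} = \binom{n}{k}\binom{m}{k} k!$ then yields exactly the asserted identity.

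I do not expect a genuine obstacle here, since the content is already carried by Theorem \ref{thm:Burchnall_general} and the Corollary is merely its specialization to $Q$ a Rodrigues polynomial. The two points requiring some care are the bookkeeping of the constant matrices $G_\bullet^\bullet$ — in particular the fact that an iterated composition of shift operators applied to $\textbf{1}$ reproduces the monic polynomial only up to the left factor $(G_\bullet^\bullet)^{-1}$, consistently with \eqref{eq:Rodrigues_formal} — and checking that the non-commutativity of the matrix factors does not obstruct the rearrangement; this does not happen because the single constant matrix $G_m^{(\nu+n)}$ that one peels off sits at the extreme left of every term on both sides.
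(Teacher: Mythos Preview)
Your proposal is correct and follows exactly the approach indicated in the paper: substitute $Q=(G_m^{(\nu+n)})^{-1}P_m^{(\nu+n)}$ (obtained as an $m$-fold iterate of shift operators on $\textbf{1}$) into Theorem~\ref{thm:Burchnall_general}, compute the derivatives $Q^{(k)}$ via the lowering relation in \eqref{eq:shift_formal}, and clear the left constant factor. Your write-up in fact supplies more detail than the paper's one-line justification, and your remarks on the bookkeeping of the $G$-matrices and on why the single constant $(G_m^{(\nu+n)})^{-1}$ factors out on the left are to the point.
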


Next, we derive an integrated version of Burchnall's formula.

\begin{thm}
\label{thm:integrated_Burchnall}
Let $Q$ and $M$ be $C^\infty$ matrix valued functions
such that all of the integrals below converge. Then we have
\begin{multline*}
\int_a^b Q(x)W^{(\nu+n)}(x)M^{(n)}(x)dx= (-1)^n
\int_a^b \sum_{k=0}^n \binom{n}{k} Q^{(k)}(x)  \bigl( G_{n-k}^{(\nu+k)}\bigr)^{-1}
P_{n-k}^{(\nu+k)}(x) \\
 \times  \left(\Phi^{(\nu)}(x) \cdots \Phi^{(\nu+k-1)}(x)\right)^\ast W^{(\nu)}(x) M(x)\, dx.
\end{multline*}
\end{thm}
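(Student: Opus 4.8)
The plan is to reduce Theorem \ref{thm:integrated_Burchnall} to the non-integrated Burchnall formula of Theorem \ref{thm:Burchnall_general} via integration by parts. First I would rewrite the left-hand side by applying the raising operator $S^{(\nu)}$ in its integral form \eqref{eq:rasing_on_Q}. Concretely, recall that $(QS^{(\nu)})(x) W^{(\nu)}(x) = \frac{d}{dx}\bigl(Q(x)W^{(\nu+1)}(x)\bigr)$, so that for any matrix valued functions $U,V$ with sufficient decay at $a$ and $b$ one has
\[
\int_a^b (US^{(\nu)})(x)\, W^{(\nu)}(x)\, V(x)^\ast\, dx = -\int_a^b U(x)\, W^{(\nu+1)}(x)\, \frac{dV^\ast}{dx}(x)\, dx,
\]
where the boundary terms vanish because the weight decays sufficiently fast (the same hypothesis already invoked for the Rodrigues formula \eqref{eq:Rodrigues_formal}). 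This is the adjoint relation between $\frac{d}{dx}$ and $S^{(\nu)}$ expressed at the level of the sesquilinear form, the formal analogue of Proposition \ref{prop:shift}(i).

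The key step is then to iterate this $n$ times. Starting from $\int_a^b Q(x) W^{(\nu+n)}(x) M^{(n)}(x)\, dx$, I peel off one derivative from $M^{(n)}$ at a time, each time trading a factor $\frac{d}{dx}$ acting on the right for an application of $S^{(\nu+j)}$ on the left and picking up a sign $(-1)$; after $n$ steps one arrives at
\[
(-1)^n \int_a^b \Bigl(Q\, S^{(\nu+n-1)} \cdots S^{(\nu+1)} S^{(\nu)}\Bigr)(x)\, W^{(\nu)}(x)\, M(x)\, dx.
\]
One has to be a little careful matching the superscripts: at the $j$-th integration by parts the relevant weight is $W^{(\nu+n-j)}$ and the operator produced is $S^{(\nu+n-j-1)}$, so the operators accumulate in exactly the order $S^{(\nu+n-1)}\cdots S^{(\nu)}$ required to invoke Theorem \ref{thm:Burchnall_general}. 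Substituting the Burchnall expansion for $\bigl(QS^{(\nu+n-1)}\cdots S^{(\nu)}\bigr)(x)$ and moving the integral inside the finite sum gives precisely the claimed identity.

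I expect the main obstacle to be the bookkeeping of boundary terms: each integration by parts is only valid if the product of $Q$ (and its derivatives), the weight $W^{(\nu+k)}$, and the partially differentiated $M$ vanishes at $x=a$ and $x=b$. In the intended examples (the Hermite-type weights of Section \ref{sec:MV_Hermite} and the Gegenbauer-type weights of \cite{Koe:Rio:Rom}) this is guaranteed by the Gaussian (resp. algebraic) decay of the weight together with the standing assumption that "all of the integrals below converge"; so in the write-up I would simply state that the boundary contributions vanish under the hypotheses of Section \ref{sec:Burchnall_formula}, exactly as in the scalar argument of \cite[\S 2]{IKR}, and carry out the induction on $n$. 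The algebra itself — Leibniz rule, tracking the shift indices — is routine once the integration-by-parts identity above is in place.
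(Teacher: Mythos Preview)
Your proposal is correct and follows essentially the same route as the paper: the paper also obtains the identity by combining the relation $(QS^{(\nu+n-1)}\cdots S^{(\nu)})(x)W^{(\nu)}(x)=\frac{d^n}{dx^n}\bigl(Q(x)W^{(\nu+n)}(x)\bigr)$ with $n$-fold integration by parts against $M$, and then inserts the Burchnall expansion of Theorem~\ref{thm:Burchnall_general}. The only cosmetic difference is that the paper performs the $n$ integrations by parts in one stroke on $\frac{d^n}{dx^n}(QW^{(\nu+n)})$, whereas you peel them off one at a time producing the $S^{(\nu+j)}$ successively; the bookkeeping of shift indices and the treatment of boundary terms are identical.
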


Note the special case $Q(x)=e^{-xt}I$, $M(x)=x^pI$
with $p<n$;
\begin{equation}\label{eq:thm:integrated_Burchnall}
\int_a^b \sum_{k=0}^n \binom{n}{k} (-t)^k  \bigl( G_{n-k}^{(\nu+k)}\bigr)^{-1}
P_{n-k}^{(\nu+k)}(x)
\left(\Phi^{(\nu)}(x) \cdots \Phi^{(\nu+k-1)}(x)\right)^\ast e^{-xt} W^{(\nu)}(x) x^p\, dx =0.
\end{equation}

\begin{proof}
Multiply \eqref{eq:Rodrigues_formal} by $W^{(\nu)}(x)M(x)$ from the right and we integrate with respect to $x$ on the interval $[a,b]$, we obtain as in the proof of Theorem \ref{thm:Burchnall_general}
$$\int_a^b \Bigl( QS^{(\nu+n-1)} \cdots S^{(\nu+1)}S^{(\nu)}\Bigr)(x) W^{(\nu)}(x)M(x)dx =
\int_a^b \frac{d^n}{dx^n}\Bigl( Q(x) W^{(\nu+n)}(x)\Bigr) M(x) dx$$
and the right hand side can
be evaluated by integration by parts as well as
by the proof of Theorem \ref{thm:Burchnall_general}.
So we get
\begin{equation*}
\begin{split}
 &(-1)^n \int_a^b Q(x)W^{(\nu+n)}(x)M^{(n)}(x)dx  \\
 = & \int_a^b \sum_{k=0}^n \binom{n}{k} Q^{(k)}(x)  \bigl( G_{n-k}^{(\nu+k)}\bigr)^{-1}  P_{n-k}^{(\nu+k)}(x) \, W^{(\nu+k)}(x) M(x)\, dx \\
= &  \int_a^b \sum_{k=0}^n \binom{n}{k} Q^{(k)}(x)  \bigl( G_{n-k}^{(\nu+k)}\bigr)^{-1}
P_{n-k}^{(\nu+k)}(x)  \left(\Phi^{(\nu)}(x) \cdots \Phi^{(\nu+k-1)}(x)\right)^\ast  W^{(\nu)}(x) M(x)\, dx
\qedhere
\end{split}
\end{equation*}
\end{proof}


\subsection{Application to matrix valued Gegenbauer-type polynomials}
\label{ssec:Gegenbauer}
The matrix valued Gegenbauer-type polynomials introduced in \cite{Koe:Rio:Rom} fit into this scheme.
For $\ell\in \frac12 \N$,  the weight is the $(2\ell+1)\times(2\ell+1)$-matrix which is given by
\begin{equation}
\label{eq:form_weight_Chebyshev}
W^{(\nu)}(x)=
L^{(\nu)}(x)T^{(\nu)}(x)L^{(\nu)}(x)^{t}, \qquad x\in(-1,1),
\end{equation}
where $L^{(\nu)}\colon [-1,1]\to M_{2\ell+1}(\C)$ is the unipotent lower triangular matrix valued polynomial
\[
\bigl(L^{(\nu)}(x)\bigr)_{m,k}=\begin{cases} 0 & \text{if } m<k \\
\displaystyle{\frac{m!}{k! (2\nu+2k)_{m-k}} C^{(\nu+k)}_{m-k}(x)} & \text{if } m\geq k.
\end{cases}
\]
Here, $C^{(\lambda)}_{m}(x)$ denote the classical Gegenbauer polynomials and $T^{(\nu)}\colon (-1,1)\to M_{2\ell+1}(\C)$ is the diagonal matrix valued function
\begin{equation*}
\begin{split}
\bigl(T^{(\nu)}(x)\bigr)_{k,k}\, =\, t^{(\nu)}_{k}\,  (1-x^2)^{k+\nu-1/2}, \quad
 t^{(\nu)}_{k}\, =\, \frac{k!\, (\nu)_k}{(\nu+1/2)_k}
\frac{(2\nu + 2\ell )_{k}\, (2\ell+\nu)}{(2\ell - k+1)_k\, ( 2\nu + k - 1)_{k}}.
\end{split}
\end{equation*}
Here we label the matrix entries by $m,k\in \{0,\cdots, 2\ell\}$.
The matrix valued Gegenbauer type orthogonal polynomials
satisfy \eqref{eq:Pearson_formal}, \eqref{eq:shift_formal},
\eqref{eq:rasing_on_Q}, \eqref{eq:Rodrigues_formal}, as follows
from Theorem~2.4, Corollary~2.2, Theorem~3.1 of \cite{Koe:Rio:Rom}. The explicit expression for the
second degree polynomial $\Phi^{(\nu)}$
and the first degree polynomial $\Psi^{(\nu)}$ are
given in \cite[(4.9), (4.10)]{Koe:Rio:Rom}.

If we use \eqref{eq:form_weight_Chebyshev} in the formula of Theorem \ref{thm:Burchnall_general}, and we use
\cite[Proposition 4.4]{Koe:Rio:Rom}, we obtain
\begin{multline}
\label{eq:Burchnall_Gegenbauer}
\Bigl(QS^{(\nu+n-1)} \cdots S^{(\nu+1)} S^{(\nu)}\Bigr)(x) = \sum_{k=0}^n \binom{n}{k} Q^{(k)}(x)  \bigl( G_{n-k}^{(\nu+k)}\bigr)^{-1}
P_{n-k}^{(\nu+k)}(x) \\
\times L^{(\nu+k)}(x) T^{(\nu+k)}(x) \left[ \left( M_1^{(\nu+k)}(x)\cdots M_1^{(\nu)}(x) \right)^t\right]^{-1} (T^{(\nu)}(x))^{-1} (L^{(\nu)}(x))^{-1},
\end{multline}
where $M^{(\nu)}_1(x)$ is a matrix polynomial
of second degree, see \cite[Prop.~4.4]{Koe:Rio:Rom}
for its explicit expression.

In particular, the expansion formula of
Corollary \ref{cor:thm:Burchnall_general} gives
an expansion for the matrix valued Gegenbauer
type polynomials which is an analogue of the
$\al=\be$ case of \cite[(3.11)]{IKR}.


\subsection{Application to matrix valued Hermite-type polynomials}
In Section \ref{sec:MV_Hermite}, we have shown that the Hermite-type matrix valued orthogonal polynomials satisfy \eqref{eq:Pearson_formal}, \eqref{eq:shift_formal},
\eqref{eq:rasing_on_Q}, \eqref{eq:Rodrigues_formal},
see in particular
Propositions \ref{prop:Pearson1},
\ref{prop:Pearson2}, \ref{prop:shift}.

In this case we can
use that in \eqref{eq:weight_factorization-alpha}
the matrix $L(x)$ does not depend on the parameter
$\nu$, and therefore the analog of \eqref{eq:Burchnall_Gegenbauer} can be simplified further;
$$
W^{(\al,\nu+k)}(x)(W^{(\al,\nu)}(x))^{-1}=
L^\al(x)\De^{(\nu+k)}(\De^{(\nu)})^{-1}(L^\al(x))^{-1}.
$$
Therefore we obtain
\begin{multline*}
\Bigl( \bigl( (QS^{(\nu+n-1)}) \cdots S^{(\nu+1)}\bigr)S^{(\nu)}\Bigr)(x)=\\
\sum_{k=0}^n \binom{n}{k} Q^{(k)}(x)
\bigl( G^{(\al,\nu)}_{n-k}\bigr)^{-1}
P_{n-k}^{(\al,\nu+k)}(x) \,
L^\al(x)\De^{(\nu+k)}(\De^{(\nu)})^{-1}(L^\al(x))^{-1}
\end{multline*}
as an alternative to Theorem \ref{thm:Burchnall_general}.
Similarly, the expansion formula in
Corollary \ref{cor:thm:Burchnall_general}
can be rewritten using the
$L^\al(x)\De^{(\nu+k)}(\De^{(\nu)})^{-1}(L^\al(x))^{-1}$
instead of
$\left(\Phi^{(\nu)}(x) \cdots \Phi^{(\nu+k-1)}(x)\right)^\ast$.


\section{An explicit solution to the
non-abelian Toda lattice}\label{sec:explsolNAToda}

The goal of this section is to give an
explicit solution of the non-abelian Toda
lattice equation
of Proposition \ref{prop:MV-Toda}
based on the matrix valued type
Hermite polynomials of Section \ref{sec:MV_Hermite}.
We determine the
monic orthogonal polynomials
with respect to the Toda modifications in two different ways.

\begin{prop}\label{prop:NATodasolHermite}
With the notation of Proposition \ref{prop:3termrecur} we have that
\begin{gather*}
C_n(t) = \exp(-\frac{t}{2}A^\al)C_n^{(\al,\nu)}\exp(\frac{t}{2}A^\al), \qquad
B_n(t) = \exp(-\frac{t}{2}A^\al)B_n^{(\al,\nu)}\exp(\frac{t}{2}A^\al) - \frac12 t
\end{gather*}
solve the non-abelian Toda lattice equation of
Proposition \ref{prop:MV-Toda} with $\La=I$.
\end{prop}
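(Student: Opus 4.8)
The plan is to identify the monic orthogonal polynomials for the Toda-modified weight in closed form and then quote Proposition~\ref{prop:MV-Toda}. For $\La=I$ the Toda modification of $W^{(\al,\nu)}$ is $W^{(\al,\nu)}(x;t)=e^{-xt}W^{(\al,\nu)}(x)$; since $W^{(\al,\nu)}(x;t)$ is self-adjoint we have $I\in\cA_t$, and $I$ is normal, so Proposition~\ref{prop:MV-Toda} applies (taking $\mu$ to be Lebesgue measure on $\R$; all moments are finite because of the factor $e^{-x^2}$ in $T^{(\nu)}$). Hence the recurrence coefficients of the monic orthogonal polynomials $P_n(\cdot;t)$ for $W^{(\al,\nu)}(\cdot;t)$ satisfy the non-abelian Toda lattice equations of Proposition~\ref{prop:MV-Toda} with $\La=I$, and it suffices to show that these coefficients equal the matrices $B_n(t)$ and $C_n(t)$ of the statement.

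First I would write down a candidate, namely $Q_n(x;t)=e^{-\frac{t}{2}A^\al}\,P_n^{(\al,\nu)}(x+\tfrac{t}{2})\,e^{\frac{t}{2}A^\al}$, which is monic of degree $n$ since conjugation preserves the leading coefficient $I$. Substituting $x\mapsto x+\tfrac{t}{2}$ in the three-term recurrence \eqref{eq:recurrence_Hermite_general} for $P_n^{(\al,\nu)}$ and conjugating by $e^{-\frac{t}{2}A^\al}(\,\cdot\,)e^{\frac{t}{2}A^\al}$ gives
\[
x\,Q_n(x;t)=Q_{n+1}(x;t)+\bigl(e^{-\frac{t}{2}A^\al}B_n^{(\al,\nu)}e^{\frac{t}{2}A^\al}-\tfrac{t}{2}\bigr)Q_n(x;t)+e^{-\frac{t}{2}A^\al}C_n^{(\al,\nu)}e^{\frac{t}{2}A^\al}\,Q_{n-1}(x;t).
\]
Thus, once $Q_n(\cdot;t)=P_n(\cdot;t)$ is established, the recurrence coefficients of $P_n(\cdot;t)$ are exactly the $B_n(t)$ and $C_n(t)$ of the statement.

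The heart of the argument is orthogonality of the $Q_n(\cdot;t)$ with respect to $W^{(\al,\nu)}(\cdot;t)$. In $\int_\R Q_n(x;t)\,e^{-xt}W^{(\al,\nu)}(x)\,Q_m(x;t)^\ast\,dx$ I would substitute $x=y-\tfrac{t}{2}$. By Corollary~\ref{cor:prop:LisexpandLinv}, and using that $A^\al$ commutes with $L^\al(0)$, one has $L^\al(y-\tfrac{t}{2})=e^{-\frac{t}{2}A^\al}L^\al(y)$; completing the square gives $e^{-(y-t/2)t}e^{-(y-t/2)^2}=e^{t^2/4}e^{-y^2}$; combining these with \eqref{eq:weight_factorization} yields
\[
e^{-(y-t/2)t}\,W^{(\al,\nu)}\bigl(y-\tfrac{t}{2}\bigr)=e^{t^2/4}\,e^{-\frac{t}{2}A^\al}\,W^{(\al,\nu)}(y)\,e^{-\frac{t}{2}(A^\al)^\ast}.
\]
Inserting this and the definition of $Q_n$, the factor $e^{\frac{t}{2}A^\al}$ in $Q_n$ cancels the $e^{-\frac{t}{2}A^\al}$ produced by the shifted weight, and similarly on the right, so the integral collapses to
\[
e^{t^2/4}\,e^{-\frac{t}{2}A^\al}\Bigl(\int_\R P_n^{(\al,\nu)}(y)W^{(\al,\nu)}(y)P_m^{(\al,\nu)}(y)^\ast\,dy\Bigr)e^{-\frac{t}{2}(A^\al)^\ast}=\de_{n,m}\,e^{t^2/4}\,e^{-\frac{t}{2}A^\al}H_n^{(\al,\nu)}e^{-\frac{t}{2}(A^\al)^\ast},
\]
which is $\de_{n,m}$ times a positive definite matrix. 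By uniqueness of monic orthogonal polynomials, $P_n(\cdot;t)=Q_n(\cdot;t)$, and together with the first paragraph this proves the proposition.

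The one genuinely delicate point is keeping track of where the factors $e^{\pm\frac{t}{2}A^\al}$ sit, together with the fact that $A^\al$ commutes with $L^\al(0)$ (noted after Corollary~\ref{cor:prop:LisexpandLinv}, and inherited by $A^\al,L^\al(0)$ from $A,L(0)$ upon conjugating by $S^\al$), so that $L^\al(y-\tfrac{t}{2})$ may be pulled apart as above; with this in hand every cancellation is forced. An alternative but messier route bypasses Proposition~\ref{prop:MV-Toda}: since $B_n^{(\al,\nu)}$ and $C_n^{(\al,\nu)}$ are independent of $t$, one finds $\dot C_n(t)=\tfrac12 e^{-\frac{t}{2}A^\al}[C_n^{(\al,\nu)},A^\al]e^{\frac{t}{2}A^\al}$ and $\dot B_n(t)=\tfrac12 e^{-\frac{t}{2}A^\al}[B_n^{(\al,\nu)},A^\al]e^{\frac{t}{2}A^\al}-\tfrac12 I$, and the Toda equations reduce, after conjugating out the exponentials, to the static identities $\tfrac12[C_n^{(\al,\nu)},A^\al]=C_n^{(\al,\nu)}B_{n-1}^{(\al,\nu)}-B_n^{(\al,\nu)}C_n^{(\al,\nu)}$ and $\tfrac12[B_n^{(\al,\nu)},A^\al]=C_n^{(\al,\nu)}-C_{n+1}^{(\al,\nu)}+\tfrac12 I$, which can in turn be checked from the explicit expressions of Proposition~\ref{prop:3termrecur}; I would prefer the orthogonality argument.
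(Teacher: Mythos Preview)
Your proof is correct and follows essentially the same route as the paper's: rewrite the Toda-modified weight $e^{-xt}W^{(\al,\nu)}(x)$ as a conjugate of the shifted weight $W^{(\al,\nu)}(x+\tfrac{t}{2})$ via Corollary~\ref{cor:prop:LisexpandLinv} and the commutation of $A^\al$ with $L^\al(0)$, read off the monic orthogonal polynomials as $e^{-\frac{t}{2}A^\al}P_n^{(\al,\nu)}(x+\tfrac{t}{2})e^{\frac{t}{2}A^\al}$, extract the recurrence coefficients, and invoke Proposition~\ref{prop:MV-Toda}. Your write-up is in fact somewhat more explicit than the paper's in carrying out the orthogonality computation, and your scalar factor $e^{t^2/4}$ is the correct one.
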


Since $A^\al$ does not commute with $B_n^{(\al,\nu)}$ nor with $C_n^{(\al,\nu)}$, this solution is
not trivial when compared to the corresponding solution of the Toda lattice equation, see e.g. \cite[Prop.~7.1(i)]{IKR}.

\begin{proof} Rewrite, using $t\in \R$,
\begin{multline*}
e^{-xt} L^\al(x) e^{-x^2} \De^{(\nu)}
\bigl( L^\al(x)\bigr)^\ast = \\
\exp(-\frac{t}{2}A^\al)e^{t^2/2}
L^\al(x+\frac12 t) e^{-(x+\frac12t)^2} \De^{(\nu)}
\bigl( L^\al(x+\frac12t)\bigr)^\ast
e^{t^2/2}\exp(-\frac{t}{2}A^\al)^\ast
\end{multline*}
using Lemma \ref{lem:Lm1JL}. The monic
orthogonal polynomials $P_n(x;t)$, cf. Proposition
\ref{prop:MV-Toda}, with respect to this matrix
measure are, up to a transformation, the matrix valued
Hermite type polynomials;
$$
P_n(x-\frac12t;t)\exp(-\frac{t}{2}A^\al)e^{t^2/2}
= \exp(-\frac{t}{2}A^\al)e^{t^2/2} P^{(\al,\nu)}_n(x).
$$
The constant (in $x$) matrix on the right hand side follows since the polynomials are monic.
From Proposition \ref{prop:3termrecur} we see that
\begin{gather*}
xP_n(x;t) = P_{n+1}(x;t)
+ \left( \exp(-\frac{t}{2}A^\al)B_n^{(\al,\nu)}\exp(\frac{t}{2}A^\al) - \frac12 t \right) P_n(x;t)\\
+ \exp(-\frac{t}{2}A^\al)C_n^{(\al,\nu)}\exp(\frac{t}{2}A^\al) P_{n-1}(x;t).
\end{gather*}
Now the result follows from Proposition \ref{prop:MV-Toda}.
\end{proof}

As a corollary to the proof, we have established in
this case
\begin{equation}\label{eq:MVTodaHermiteexplpols}
P_n(x;t)
= \exp(-\frac{t}{2}A^\al) P^{(\al,\nu)}_n(x+\frac12t)\exp(\frac{t}{2}A^\al).
\end{equation}
On the other hand, by \eqref{eq:thm:integrated_Burchnall}, in case
of the matrix valued Hermite type orthogonal polynomials, we see that
the polynomials, in the notation of Section \ref{sec:MV_Hermite},
\[
 \sum_{k=0}^n \binom{n}{k} (-t)^k  \bigl( G_{n-k}^{(\al,\nu+k)}\bigr)^{-1}
P_{n-k}^{(\al,\nu+k)}(x)
\left(\Phi^{(\al,\nu)}(x) \cdots \Phi^{(\al,\nu+k-1)}(x)\right)^\ast
\]
are orthogonal with respect the matrix weight $e^{-xt} W^{(\nu)}(x)$. Indeed, since the polynomial $\Phi^{(\al,\nu)}$
has degree $1$, this is indeed a polynomial of degree $n$.
The leading coefficient is
\begin{equation}\label{eq:lccombinationHermite}
M^{(\al,\nu)}_n(t) = \sum_{k=0}^n \binom{n}{k} t^k
\Bigl( \prod_{p=0}^{k-1} d^{(\nu+p)} \Bigr)
\bigl( G_{n-k}^{(\al,\nu+k)}\bigr)^{-1} (A^\al)^k
\end{equation}
which is a lower triangular matrix with
the constant (in $t$) matrix $\bigl( G_{n}^{(\al,\nu)}\bigr)^{-1}$ on the diagonal.
Hence $M^{(\al,\nu)}_n(t)$ is invertible.

\begin{prop}\label{prop:expansion-MVHermiteToda}
The matrix valued Hermite type polynomials $P^{(\al,\nu)}_n$ of
Section \ref{sec:MV_Hermite} satisfy
\begin{multline*}
\exp(-\frac{t}{2}A^\al) P^{(\al,\nu)}_n(x+\frac12t)\exp(\frac{t}{2}A^\al) = \\
\bigl(M^{(\al,\nu)}_n(t)\bigr)^{-1} \sum_{k=0}^n \binom{n}{k} (-t)^k  \bigl( G_{n-k}^{(\al,\nu+k)}\bigr)^{-1}
P_{n-k}^{(\al,\nu+k)}(x)
\left(\Phi^{(\al,\nu)}(x) \cdots \Phi^{(\al,\nu+k-1)}(x)\right)^\ast
\end{multline*}
\end{prop}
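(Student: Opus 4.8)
The plan is to recognize both sides of the identity as the monic matrix valued orthogonal polynomial of degree $n$ for the Toda-deformed weight $e^{-xt}W^{(\al,\nu)}(x)$, and then to invoke uniqueness of monic orthogonal polynomials from Section~\ref{ssec:MVOPS}. Note first that $e^{-xt}W^{(\al,\nu)}(x)$ is again a positive definite weight with finite moments for every $t\in\R$, thanks to the Gaussian factor $e^{-x^2}$ inside $T^{(\nu)}$, so the monic orthogonal polynomials $P_n(\cdot;t)$ from Proposition~\ref{prop:MV-Toda} with $\La=I$ exist and are unique. By \eqref{eq:MVTodaHermiteexplpols}, established in the proof of Proposition~\ref{prop:NATodasolHermite}, the left-hand side of the proposition is exactly $P_n(x;t)$.

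Next I would analyze the sum on the right-hand side; write $R_n(x;t)$ for that sum, i.e. the right-hand side before left multiplication by $\bigl(M^{(\al,\nu)}_n(t)\bigr)^{-1}$. Since each $\Phi^{(\al,\nu+j)}$ has degree one by Proposition~\ref{prop:Pearson1}, the $k$-th summand of $R_n(\cdot;t)$ has degree $(n-k)+k=n$, so $R_n(\cdot;t)$ has degree $n$, and its leading coefficient is the matrix $M^{(\al,\nu)}_n(t)$ displayed in \eqref{eq:lccombinationHermite}. That matrix is lower triangular with the diagonal matrix $\bigl(G^{(\al,\nu)}_n\bigr)^{-1}$ on the diagonal, because $A^\al$ is strictly lower triangular and hence $(A^\al)^k$ with $k\geq1$ contributes nothing to the diagonal, and $G^{(\al,\nu)}_n$ is diagonal and invertible by Proposition~\ref{prop:shift}(iii); so $M^{(\al,\nu)}_n(t)$ is invertible and $\bigl(M^{(\al,\nu)}_n(t)\bigr)^{-1}R_n(x;t)$ is a monic polynomial of degree $n$.

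Then I would invoke the integrated Burchnall identity: the specialization \eqref{eq:thm:integrated_Burchnall} of Theorem~\ref{thm:integrated_Burchnall} (take $Q(x)=e^{-xt}I$ and $M(x)=x^pI$ with $p<n$, for which $M^{(n)}\equiv 0$) says precisely that $R_n(\cdot;t)$ is orthogonal to $x^pI$ for all $p<n$ with respect to $e^{-xt}W^{(\al,\nu)}$, and hence, by the elementary properties of $\langle\cdot,\cdot\rangle$ recalled in Section~\ref{ssec:MVOPS}, to every matrix valued polynomial of degree $<n$. Left multiplication by the constant matrix $\bigl(M^{(\al,\nu)}_n(t)\bigr)^{-1}$ preserves this orthogonality, so $\bigl(M^{(\al,\nu)}_n(t)\bigr)^{-1}R_n(x;t)$ is monic of degree $n$ and orthogonal to all polynomials of lower degree; by uniqueness it equals $P_n(x;t)$, and combining with the first step yields the asserted identity.

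Most of this is bookkeeping carried out earlier in the paper; the only step needing genuine care is pinning down the leading coefficient of $R_n(\cdot;t)$ as $M^{(\al,\nu)}_n(t)$ and checking its invertibility, which rests on the explicit first-degree form of $\Phi^{(\al,\nu)}$ from Proposition~\ref{prop:Pearson1}, on the fact that the various $A^\al$'s commute and $A^\al$ is nilpotent and strictly lower triangular, and on the invertibility of the diagonal matrices $G^{(\al,\nu)}_n$.
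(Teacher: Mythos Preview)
Your proposal is correct and follows exactly the approach of the paper: both sides are identified with the unique monic orthogonal polynomial $P_n(x;t)$ for the Toda-deformed weight, the left-hand side via \eqref{eq:MVTodaHermiteexplpols} and the right-hand side via the integrated Burchnall identity \eqref{eq:thm:integrated_Burchnall} together with the invertibility of the leading coefficient $M^{(\al,\nu)}_n(t)$ from \eqref{eq:lccombinationHermite}. The paper presents this argument in the paragraphs immediately preceding the proposition rather than in a separate proof environment, but the logic is identical to yours.
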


The identity of Proposition \ref{prop:expansion-MVHermiteToda} is the matrix analogue of \cite[(3.4)]{IKR} (which has a typo).
Now the scalar case \cite[(3.4)]{IKR} has an easy proof using the generating function \eqref{eq:genfunHermite},
but there does not seem to be a proof of Proposition \ref{prop:expansion-MVHermiteToda}
by generating functions.

Now that we have the Toda modified polynomials \eqref{eq:MVTodaHermiteexplpols}
explicitly, we can take the derivative with respect to $t$ and use Lemma \ref{lem:Todapolstimederivative}.
This gives, using the notation $X^{(\al,\nu)}_n(t)$ for the $t$-dependent one-but-leading coefficient
of $P_n(x;t)$ and Proposition \ref{prop:shift}(ii),
\begin{gather*}
e^{-\frac{t}{2}A^\al}\left( -\frac12 A^\al P^{(\al,\nu)}_n(x+\frac12t)
+\frac12 nP^{(\al,\nu+1)}_{n-1}(x+\frac12t) +P^{(\al,\nu)}_n(x+\frac12t) \frac12 A^\al \right) e^{\frac{t}{2}A^\al}
\\
= \frac{dX^{(\al,\nu)}_n}{dt}(t) e^{-\frac{t}{2}A^\al} P^{(\al,\nu)}_{n-1}(x+\frac12 t) e^{\frac{t}{2}A^\al}.
\end{gather*}
Simplifying gives
\begin{equation}
\begin{split}
nP^{(\al,\nu+1)}_{n-1}(x+\frac12t) + [P^{(\al,\nu)}_n(x+\frac12t), A^{\al}]=
2 e^{\frac{t}{2}A^\al} \frac{dX^{(\al,\nu)}_n}{dt}(t) e^{-\frac{t}{2}A^\al} P^{(\al,\nu)}_{n-1}(x+\frac12 t).
\end{split}
\end{equation}
Note that the $t$-derivative of the expression on the right hand side of
Proposition \ref{prop:expansion-MVHermiteToda} is more complicated.



\begin{thebibliography}{99}

\bibitem{AlvaAGAMM}
C.~\'Alvarez-Fern\'andez, G.~Ariznabarreta, J.C. Garc\'\i a-Ardila, M.~Ma\~nas,
F.~Marcell\'an,
\emph{Christoffel transformations for matrix orthogonal polynomials in the real line and the non-Abelian 2D Toda lattice hierarchy},
Int. Math. Res. Not.  \textbf{2017} (2017), 1285-1341.

\bibitem{AndrAR}  G.E.~Andrews, R.A.~Askey, R.~Roy,
\emph{Special Functions}, Cambridge Univ. Press, 1999.

\bibitem{ApteN} A.I.~Aptekarev, E.M.~Nikishin,
\emph{The scattering problem for a discrete Sturm-Liouville operator}.
Mat. USSR Sbornik \textbf{49} (1984), 325--355.

\bibitem{AskeW} R.~Askey, J.~Wilson,
\emph{Some basic hypergeometric orthogonal polynomials that generalize Jacobi polynomials},
Mem. Amer. Math. Soc. \textbf{54} (1985), no.~319.

\bibitem{BabeBT}
O.~Babelon, D.~Bernard, M.~Talon,
\emph{Introduction to Classical Integrable Systems},
Cambridge Univ. Press, 2003.

\bibitem{Belo}
A.~Belogrudov,
\emph{Semi-classical orthogonal polynomials and integrable chains}, pp. 301--305
in ``Self-Similar Systems'' (eds. V.B.~Priezzhev, V.P.~Spiridonov), Dubna, 1999.

\bibitem{BrusMRL}
M.~Bruschi, S.V.~Manakov, O.~Ragnisco, D.~Levi,
\emph{The nonabelian Toda lattice-discrete analogue of the matrix Schr\"odinger spectral
problem},
J. Math. Phys. \textbf{21} (1980), 2749--2753.

\bibitem{Burc} J.L.~Burchnall,
\emph{A note on the polynomials of Hermite},
Quart. J. Math. Oxford. \textbf{12} (1941), 9--11.

\bibitem{CaglK}
L.~Cagliero, T.H.~Koornwinder,
\emph{Explicit matrix inverses for lower triangular matrices with entries involving Jacobi polynomials},
J. Approx. Theory \textbf{193} (2015), 20--38.


\bibitem{CantMV}
M.J.~Cantero, L.~Moral, L.~Vel\'azquez,
\emph{Matrix orthogonal polynomials whose derivatives are also orthogonal},
J. Approx. Theory \textbf{146} (2007), 174--211.

\bibitem{CaspY}
W.R.~Casper, M.~Yakimov,
\emph{The matrix Bochner problem},
\texttt{arXiv:1803.04405}.

\bibitem{DamaPS}
D.~Damanik, A.~Pushnitski, B.~Simon,
\emph{The analytic theory of matrix orthogonal polynomials},
Surveys in Approx. Th. \textbf{4} (2008), 1--85.

\bibitem{Dura-CA}
A.J.~Dur{\'a}n,
\emph{A method to find weight matrices having symmetric
second-order differential operators with matrix leading coefficient},
Constr. Approx. \textbf{29}(2009),  181--205.

\bibitem{DuraG}
A.J.~Dur{\'a}n, F.A.~Gr{\"u}nbaum,
\emph{Orthogonal matrix polynomials satisfying second-order differential equations},
Int. Math. Res. Not. \textbf{2004} (2004), 461--484.

\bibitem{DuraVA}
A.J.~Dur\'an, W.~Van Assche,
\emph{Orthogonal matrix polynomials and higher-order recurrence relations},
Linear Algebra Appl. \textbf{219} (1995), 261--280.

\bibitem{Gasp-JMAA}
G.~Gasper,
\emph{Projection formulas for orthogonal polynomials of a discrete variable},
J. Math. Anal. Appl. \textbf{45} (1974), 176--198.

\bibitem{Gekh}
M. Gekhtman,
\emph{Hamiltonian structure of non-abelian Toda lattice},
Lett. Math. Phys. \textbf{46} (1998), 189--205.

\bibitem{Gero}
J.S.~Geronimo,
\emph{Scattering theory and matrix orthogonal polynomials on the real line},
Circuits Systems Signal Process.
\textbf{1} (1982), 471--495.

\bibitem{GroeIK}
W.~Groenevelt, M.E.H.~Ismail, E.~Koelink,
\emph{Spectral theory and matrix valued orthogonal polynomials},
Adv. Math.
\textbf{244} (2013), 91--105.

\bibitem{GroeK}
W.~Groenevelt, E.~Koelink,
\emph{A hypergeometric function transform and matrix-valued orthogonal polynomials},
Constr. Approx. \textbf{38} (2013), 277--309.

\bibitem{GrundlIMF}
F.A.~Gr{\"u}nbaum, M.D.~de la Iglesia, A.~Mart\'\i nez-Finkelshtein,
\emph{Properties of matrix orthogonal polynomials via their Riemann-Hilbert characterization},
SIGMA Symmetry Integrability Geom. Methods Appl. \textbf{7} (2011), paper 098, 31 p.

\bibitem{GrunPT}
F.A.~Gr{\"u}nbaum, I.~Pacharoni, J.~Tirao,
\emph{Matrix valued spherical functions associated to the complex projective plane},
J. Funct. Anal. \textbf{188} (2002), 350--441.

\bibitem{GrunT} F.A.~Gr\"unbaum, J.~Tirao,
\emph{The algebra of differential operators associated to a weight matrix},
Integral Eq. Operator Theory \textbf{58} (2007), 449--475.

\bibitem{HeckvP}
G.~Heckman, M.~van Pruijssen,
\emph{Matrix valued orthogonal polynomials for Gelfand pairs of rank one},
Tohoku Math. J. (2) \textbf{68} (2016), 407--436.

\bibitem{Isma} M.E.H.~Ismail,
\emph{Classical and Quantum Orthogonal Polynomials in One Variable},
Cambridge Univ. Press,  2009.

\bibitem{IKR} M.E.H.~Ismail, E.~Koelink, P.~Rom\'an,
\emph{Generalized Burchnall-type identities for orthogonal polynomials and expansions},
SIGMA Symmetry Integrability Geom. Methods Appl. \textbf{14} (2018), 072, 24 p.

\bibitem{KoekLS} R.~Koekoek, P.A.~Lesky, R.F.~Swarttouw,
\emph{Hypergeometric Orthogonal Polynomials and their $q$-Analogues},
Springer, 2010.

\bibitem{KoekS} R.~Koekoek, R.F.~Swarttouw,
\emph{The Askey-scheme
of hypergeometric orthogonal polynomials and its $q$-analogue}, online at
\texttt{http://aw.twi.tudelft.nl/\~{}koekoek/askey.html}, Report
98-17, Technical University Delft, 1998.

\bibitem{KoelvPR1} E.~Koelink, M.~van Pruijssen, P.~Rom\'an,
\emph{Matrix-valued orthogonal polynomials related to
 $(\SU(2)\times\SU(2),\text{diag})$},
 Int. Math. Res. Not. \textbf{2012} (2012),  5673--5730.

\bibitem{KoelvPR2} E.~Koelink, M.~van Pruijssen, P.~Rom\'an,
\emph{Matrix-valued orthogonal polynomials related to $(\SU(2)\times\SU(2),\text{diag})$, II},
 Publ. Res. Inst. Math. Sci. \textbf{49} (2013), 271--312.


\bibitem{Koe:Rio:Rom} E.~Koelink, A.M.~de los R\'\i os, P.~Rom\'an,
\emph{Matrix-valued Gegenbauer-type type polynomials},
Constr. Approx. \textbf{46} (2017), 459--487.

\bibitem{KR15}
E.~Koelink, P.~Rom\'an,
\emph{Orthogonal vs. non-orthogonal reducibility of matrix valued measures},
SIGMA Symmetry Integrability Geom. Methods Appl. \textbf{12} (2016), 008, 9 p.

\bibitem{Koor-dualAF}
T.H.~Koornwinder,
\emph{Dual addition formulas associated with dual product formulas},
pp. 373--392 in
``Frontiers in Orthogonal Polynomials and $q$-Series''
(eds. M.Z.~Nashed, X.~Li), World Scientific, 2018.

\bibitem{Krein1} M.G.~Krein,
\emph{Infinite J-matrices and a matrix moment problem}, Dokl. Akad. Nauk SSSR 69 (1949), 125--128.

\bibitem{Krein2} M.G.~Krein,
\emph{Fundamental aspects of the representation theory of hermitian operators with deficiency index $(m,m)$},
{AMS Translations ser. 2} 97 (1971), 75--143.

\bibitem{Prui}
M.~van Pruijssen,
\emph{Multiplicity free induced representations and orthogonal polynomials},
Int. Math. Res. Not.~2018 (2018), 2208--2239.

\bibitem{PruijR}
M.~van Pruijssen, P.~Rom\'an,
\emph{ Deformation of matrix valued orthogonal polynomials related to Gel\-fand pairs},
\texttt{arXiv:1610.01257}.

\bibitem{TiraZ} J.~Tirao, I.~Zurri\'an,
\emph{Reducibility of matrix weights},
Ramanujan J. \textbf{45} (2018), 349--374.

\end{thebibliography}
\end{document}